\newtheorem{thm}{Theorem}[section]
\newtheorem{proposition}[thm]{Proposition}
\newtheorem{corollary}[thm]{Corollary}
\newtheorem{lemma}[thm]{Lemma}
\newtheorem{remark}[thm]{Remark}
\newcommand{\secref}[1]{Section~\ref{sec:#1}}
\newcommand{\seclab}[1]{\label{sec:#1}}
\newcommand{\eqlab}[1]{\label{eq:#1}}
\renewcommand{\eqref}[1]{(\ref{eq:#1})}
\newcommand{\figref}[1]{Fig.~\ref{fig:#1}}
\newcommand{\figlab}[1]{\label{fig:#1}}
\newcommand{\propref}[1]{Proposition~\ref{proposition:#1}}
\newcommand{\proplab}[1]{\label{proposition:#1}}
\newcommand{\lemmaref}[1]{Lemma~\ref{lemma:#1}}
\newcommand{\lemmalab}[1]{\label{lemma:#1}}
\newcommand{\remref}[1]{Remark~\ref{remark:#1}}
\newcommand{\remlab}[1]{\label{remark:#1}}
\newcommand{\thmref}[1]{Theorem~\ref{theorem:#1}}
\newcommand{\thmlab}[1]{\label{theorem:#1}}
\newcommand{\appref}[1]{Appendix~\ref{app:#1}}
\newcommand{\applab}[1]{\label{app:#1}}
\newcommand\response[1]{{\color{black}{#1}}}
\newcommand\responsenew[1]{{\color{black}{#1}}}
\title{The dud canard: Existence of strong canard cycles in $\mathbb R^3$} 
\author{K. Uldall Kristiansen}
\address{Department of Applied Mathematics and Computer Science, \\
Technical University of Denmark, \\
2800 Kgs. Lyngby, \\
Denmark }
\begin{document}
 
\maketitle
%
%
%
 \begin{abstract}
In this paper, we provide a rigorous description of the birth of canard limit cycles in slow-fast systems in $\mathbb R^3$ through the folded saddle-node of type II and the singular Hopf bifurcation. In particular, we prove  -- in the analytic case only -- that for all $0<\epsilon\ll 1$ there is a family of periodic orbits, born in the (singular) Hopf bifurcation and extending to $\mathcal O(1)$ cycles that follow the strong canard of the folded saddle-node. Our results can be seen as an extension of the canard explosion in $\mathbb R^2$, but in contrast to the planar case, the family of periodic orbits in $\mathbb R^3$ is not explosive. For this reason, we have chosen to call the phenomena in $\mathbb R^3$, the ``dud canard''. The main difficulty of the proof lies in connecting the Hopf cycles with the canard cycles, since these are described in different scalings. As in $\mathbb R^2$, we use blowup to overcome this, but we also have to compensate for the lack of uniformity near the Hopf bifurcation, due to its singular nature; it is a zero-Hopf bifurcation in the limit $\epsilon=0$. In the present paper, we do so by imposing analyticity of the vector-field. This allows us to prove existence of an invariant slow manifold, that is not normally hyperbolic. 
 \end{abstract}
\tableofcontents
\section{Introduction}\seclab{intro}
In this paper, we consider slow-fast systems of the form
\begin{equation}\eqlab{uvfast}
\begin{aligned}
 \dot u &=\epsilon U(u,v,\epsilon),\\
 \dot v&=V(u,v,\epsilon),
\end{aligned}
\end{equation}
with $U$ and $V$ sufficiently smooth and $0<\epsilon\ll 1$. Systems of this form occur in many different applications, including neuroscience \cite{amir2002a,izhi,rinzel1987a}, biology and chemical reaction networks \cite{epstein1996a,goryachev1997a} and many other areas, see  \cite{desroches2012a,kuehn2015} for further references. Geometric Singular Perturbation Theory \cite{jones_1995} is a collection of methods, based upon the ground-breaking work of Fenichel \cite{fen1,fen2,fen3}, that can be used to study systems of the form \eqref{uvfast} for $0<\epsilon\ll 1$. The point of departure for this theory, is the critical manifold 
\begin{align*}
 S= \{(u,v)\,:\, V(u,v,0)=0\},
\end{align*}
which is a set of equilibria for the associated layer problem:
\begin{equation}\eqlab{layeruv}
\begin{aligned}
 \dot u &=0,\\
 \dot v &=V(u,v,0),
\end{aligned}
\end{equation}
obtained by setting $\epsilon=0$ in \eqref{uvfast}. $S$ is said to be normally hyperbolic if all eigenvalues of $D_vV(u,v)$ for all $(u,v)\in S$ have nonzero real part. In particular, it is attracting (repelling) if the real part of these eigenvalues is negative (positive, respectively). Fenichel's theory, see  \cite{fen1,fen2,fen3,jones_1995}, then says that compact submanifolds $S_0$ of $S$ perturb to diffeomorphic locally invariant manifolds $S_\epsilon$ for all $0<\epsilon\ll 1$. The reduced flow on $S_\epsilon$ is to leading order given by the reduced problem:
\begin{align*}
 u' &= U(u,v,0),\\
 0&=V(u,v,0), 
\end{align*}
obtained by writing \eqref{uvfast} in terms of the slow time $\tau = \epsilon t$ with $()'=\frac{d}{d\tau}$ and subsequently letting $\epsilon\rightarrow 0$.
Moreover, stable and unstable manifolds of $S_0$ 
also perturb to $W^s(S_\epsilon)$ and $W^u(S_\epsilon)$, each having  invariant foliations by fibers; for full details see e.g. \cite{jones_1995}. 

Following work by Dumortier and Roussarie on the blowup method \cite{dumortier_1996}, there was in the early parts of the 2000s an effort \cite{krupa_extending_2001,krupa2001a,krupa2008a,szmolyan_canards_2001,wechselberger_existence_2005} to extend the geometric theory of Fenichel to points where normal hyperbolicity breaks down. The simplest type of breakdown, is perhaps observed in $\mathbb R^2$ and $\mathbb R^3$ with $S$ having folds, that divide the critical manifold into attracting and repelling subsets. In these cases, canard solutions are solutions of \eqref{uvfast} for $0<\epsilon\ll 1$ that -- counter-intuitively -- follow the attracting and repelling branches of the critical manifold by passing close to the fold. Canards are well-described in $\mathbb R^2$ and $\mathbb R^3$ \response{\cite{beno2001a,Benoit81,dumortier_1996,krupa2001a,krupa2008a,szmolyan_canards_2001,vonew}} and play an important role in applications and in the global dynamics of systems of the form \eqref{uvfast}, see e.g. \cite{desroches2012a,mujica2017a,Vo}.

In $\mathbb R^2$, canards of folded critical manifolds require an unfolding parameter \cite{krupa2001a} and here canard orbits may be limit cycles. In fact, the reference \cite{krupa2001a} proves that  there is a family of periodic orbits -- under some non-degeneracy conditions -- that include small (i.e. of size $o(1)$) Hopf cycles, intermediate cycles (see also \cite{dumortier2009a,huzak2019a}) and canard cycles of size $\mathcal O(1)$. Under some additional global properties, such family may be extended further to include canards with ``head'', see \cite{krupa2001a}, and eventually relaxation oscillations, as in the van der Pol system \cite{krupa2001a,pol1920a}. This situation is also known as the canard explosion \cite{bronsBarEli}, due to the fact that the canard limit cycles of different amplitude differ in parameter values by an order of $\mathcal O(e^{-c/\epsilon})$ for all $0<\epsilon\ll 1$, see full details in \cite{krupa2001a}.

In $\mathbb R^3$, on the other hand, with $u\in \mathbb R^2$ and $v\in \mathbb R$ in \eqref{uvfast}, canards of folded critical manifolds are generic, without parameters. They appear persistently at so-called folded singularities, which are singular points on the fold of a ``desingularized'' reduced problem, see \cite{beno2001a,szmolyan_canards_2001}. The folded singularities come in different generic versions: folded node, folded saddle and folded focus depending on the type of singularity, with only the former two producing canard solutions. 

The folded node is of particular interest due to its connection to mixed-mode oscillations. In summary, the folded node gives rise to a weak canard (under a nonresonance condition) and a strong canard -- essentially due to the weak and the strong directions of the linearization of the node -- and close to folded singularity, it has been shown, using blowup \cite{szmolyan_canards_2001}, that the tangent space of the attracting slow manifold twists a finite number of times along the weak canard. This implies, due to the contractivity towards the weak canard on the attracting side of the critical manifold, that an open set of points twists upon passage through the folded node. Upon composition with a global return mapping, this provides a simple mechanism for producing attracting limit cycles, see \cite{brons-krupa-wechselberger2006:mixed-mode-oscil}, that are of mixed-mode type, see also \cite{desroches2012a}.

The folded saddle-node is a bifurcation of the folded singularity. It comes in different types I and II, \response{see \cite{krupa2008a,vonew}}, but the unfolding of type II  -- at the level of the reduced problem -- produces a transcritical bifurcation of a true singularity and a folded one. In this paper, we will only focus on the type II and we will therefore continue to refer to this case simply as the folded saddle-node. This bifurcation is known to give rise to a Hopf bifurcation \cite{krupa2008a}. It is a singular Hopf bifurcation \cite{braaksma1998a,guckenheimer2008a} due to the fact that the linearization (upon blowup) has eigenvalues of the form $\sim \pm i \omega,\sim \epsilon\lambda$, $\omega,\lambda\ne 0$ as $\epsilon\rightarrow 0$ at the Hopf bifurcation; it is therefore a zero-Hopf bifurcation \cite{baldom2019a,baldom2008a} for $\epsilon=0$.

The interest in the folded saddle-node comes from the fact that it marks the onset (or termination) of mixed-mode oscillations through the folded node. However, in the author's opinion, the details of this onset/termination and the connection of mixed-mode oscillations with the Hopf cycles is still not fully understood. \cite[Theorem 4.2]{brons-krupa-wechselberger2006:mixed-mode-oscil} relates to the connection problem, but only indirectly. Specifically, the bifurcation described in this theorem -- where a return mechanism transverses the strong canard -- does not relate to the Hopf bifurcation. The reference \cite{mujica2017a} is another interesting study, based upon detailed numerical computations. Here the Hopf cycles are continued using the software package AUTO and it is demonstrated (for a fixed small value of $\epsilon>0$) that these cycles are of relaxation type without mixed-modes. In particular, the periodic orbits of mixed-mode type in the model system of \cite{mujica2017a} form isolas that are disconnected from the branch of Hopf cycles (which undergo period doubling bifurcations). 
At the same time, \cite{guckenheimer2008a} studies a normal form for a (different) singular Hopf with two slow variables, computing the Lyapunov coefficient and demonstrating additional bifurcations (periodic doubling and torus) using numerical computations along the branch of period orbits that appear from the Hopf bifurcation.  \cite{kukmgp} describes the onset of mixed-modes in a cusped saddle-node in a system with symmetry.

\responsenew{Finally, \cite{zaks2011a} studies a Fitz-Hugh-Nagumo-like system and demonstrates, also through numerical computations, that the Hopf cycles lose stability via a sequence of period-doubling bifurcations. Interestingly, the cascade follows the Feigenbaum constant for conservative systems for small values of $\epsilon>0$. To the best of the author's knowledge, these results on period doubling bifurcations have not been studied rigorously.}

It has also been speculated, following work on the Koper model \cite{koper1991a,koper1992a}, that the folded saddle-node may be associated with homoclinics and Shilnikov bifurcations. This was demonstrated for the Koper model in \cite{guckenheimer2015a} using sophisticated numerical methods, among other things.
At the same time, it is by now known \cite{baldom2019a,broer1984a} that generic unfoldings of the zero-Hopf bifurcation produce Shilnikov bifurcations. In future work, the present author hopes to pursue these bifurcations (period doubling and Shilnikov) rigorously in the context of the folded saddle-node.  In preparation, we will in this paper extend the results of \cite{krupa2001a} on the family of periodic canard orbits in $\mathbb R^2$ to the $\mathbb R^3$-context. Whereas the family of canard cycles have ``explosive growth'' in the planar context,  the growth rate is regular in $\mathbb R^3$, with canard cycles of different $\mathcal O(1)$-amplitude corresponding (in general)  to parameter values that differ by an $\mathcal O(1)$-amount. 
For this reason, we have chosen to call the phenomena we describe as the ``dud canard'' instead of the canard explosion.

\subsection{Setting}
We consider the following normal form for the folded node/folded saddle-node \cite{krupa2008a}:
\begin{equation}\eqlab{fast}
\begin{aligned}
 \dot x &=\epsilon (y-(\mu+1)z+F(x,y,z,\epsilon,\mu)),\\
  \dot y &=\epsilon (\frac12 \mu+ G(x,y,z,\epsilon,\mu)),\\
 \dot z &= x+z^2 + z H(x,y,z,\epsilon,\mu),
\end{aligned}
\end{equation}
in the regime $\epsilon>0$, $\epsilon\sim 0$ and $\mu\sim 0$,
where $F$, $G$ and $H$ are smooth and higher order in the following sense:
\begin{align*}
 F(x,y,z,\epsilon,\mu)=\mathcal O(x,\epsilon,(\vert y\vert+\vert z\vert)^2),\quad G(x,y,z,\epsilon,\mu)=\mathcal O(x,y,z,\epsilon),
\end{align*}
and
\begin{align*}
 H(x,y,z,\epsilon,\mu)=\mathcal O(xz,xy,z^2,\epsilon).
\end{align*}
\response{In comparison with \cite{krupa2008a} we have $z H$ instead of $H$. This plays little role, but the latter can be brought into the former by a transformation of $x$.} $\mu$ will be our bifurcation parameter. 

The system is normalized such that the following holds:\response{
\begin{lemma}\lemmalab{mprop}
The layer problem has a critical manifold $S$ of the graph form $x=m(y,z,\mu)$ with $m$ satisfying 
\begin{align}\eqlab{mprop}
m(y,0,\mu)=\frac{\partial}{\partial z}m(y,0,\mu)\equiv 0,\quad \frac{\partial^2}{\partial z^2}m(0,0,\mu)=-2.
\end{align} Locally, the manifold $S$ is normally hyperbolic for $z\ne 0$, being  attracting along $z<0$ and repelling for $z>0$. The line defined by $(0,y,0)$ is a fold line of $S$.
\end{lemma}
\begin{proof}
 The critical manifold is given by 
 \begin{align*}
  x+z^2 + z H(x,y,z,0,\mu)=0.
 \end{align*}
We can solve this equation for $x$ by the implicit function theorem $x=m(y,z,\mu)$ and a simple calculation shows \eqref{mprop}.
\end{proof}
}
It follows that $m$ takes the form $m(y,z,\mu)=-z^2(1+\mathcal O(y,z))$, and we can therefore define smooth functions \response{$L_1$ and $L_2$ by 
\begin{align*}
 \frac{\partial m}{\partial y}(y,z,\mu) =: z^2L_1(y,z,\mu),\quad
 \frac{\partial m}{\partial z}(y,z,\mu) =: -2z(1+L_2(y,z,\mu)).
\end{align*}
Clearly, $L_2(0,0,\mu)\equiv 0$.}
 Upon desingularization, \response{corresponding to multiplication of the right hand side by $\frac{\partial m}{\partial z}(y,z,\mu)$}, the reduced problem takes the following form:
\begin{equation}\eqlab{reduced}
\begin{aligned}
 y' &=-2z(1+L_2(y,z,\mu)) \left(\frac12 \mu+G(m(y,z),y,z,0,\mu)\right),\\
 z' &=y-(\mu+1)z+F(m(y,z),y,z,0,\mu)\responsenew{-z^2 L_1(y,z,\mu) \left(\frac12 \mu+G(m(y,z),y,z,0,\mu)\right)},
\end{aligned}
\end{equation}
with $x=m(y,z)$. Consequently, we have a folded singularity $p$ \cite{szmolyan_canards_2001} at $(x,y,z)=(0,0,0)$. In fact, $(y,z)=(0,0)$ is partially hyperbolic for \eqref{reduced} with $\mu=0$ and a center manifold reduction shows that \eqref{reduced} undergoes a transcritical bifurcation for $\mu=0$ if 
\begin{align}
 \lambda:=\partial_y G(\textbf 0)+\partial_z G(\textbf 0)\ne 0.\eqlab{assumption}
\end{align}
It is this bifurcation that is known as a folded saddle-node (of type II \cite{krupa2008a}) for the slow-fast system \eqref{fast}. We illustrate the bifurcation in \figref{fig:fsn} in terms of the slow time; in comparison with \eqref{reduced} the directions on the repelling sheet are therefore reversed \cite{szmolyan_canards_2001}. Here \figref{fig:fsn} (a) shows $\lambda<0$ whereas \figref{fig:fsn} (b) shows $\lambda>0$.

\begin{figure}[h!]
\begin{center}
\subfigure[$\lambda<0$]{\includegraphics[width=.93\textwidth]{./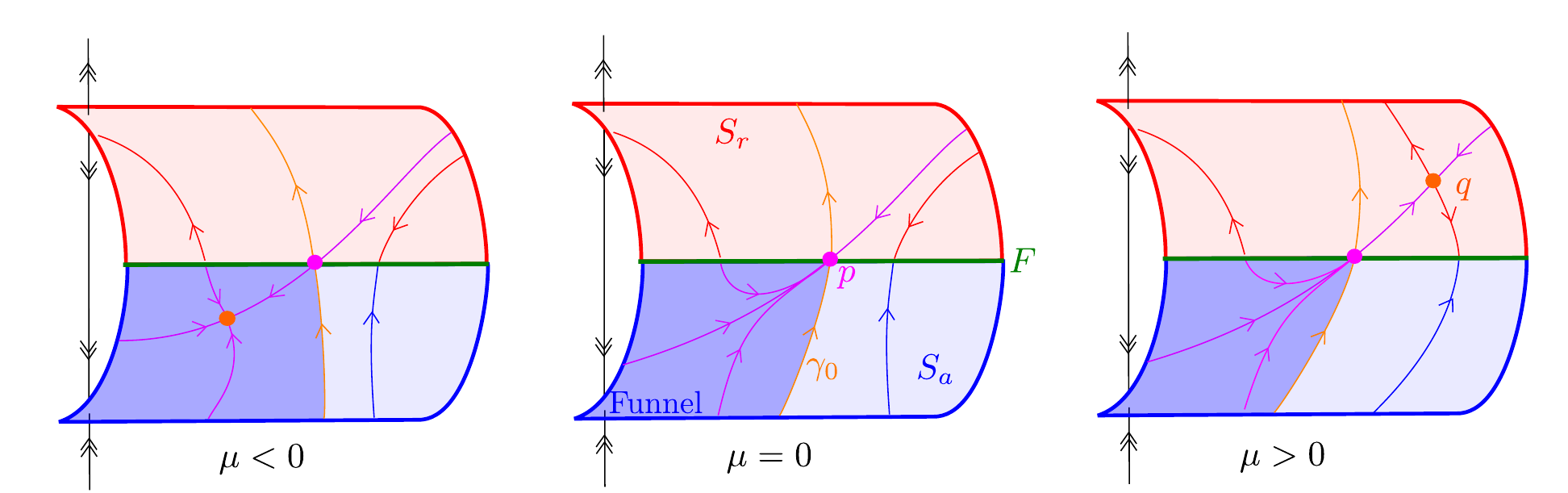}}
\subfigure[$\lambda>0$]{\includegraphics[width=.93\textwidth]{./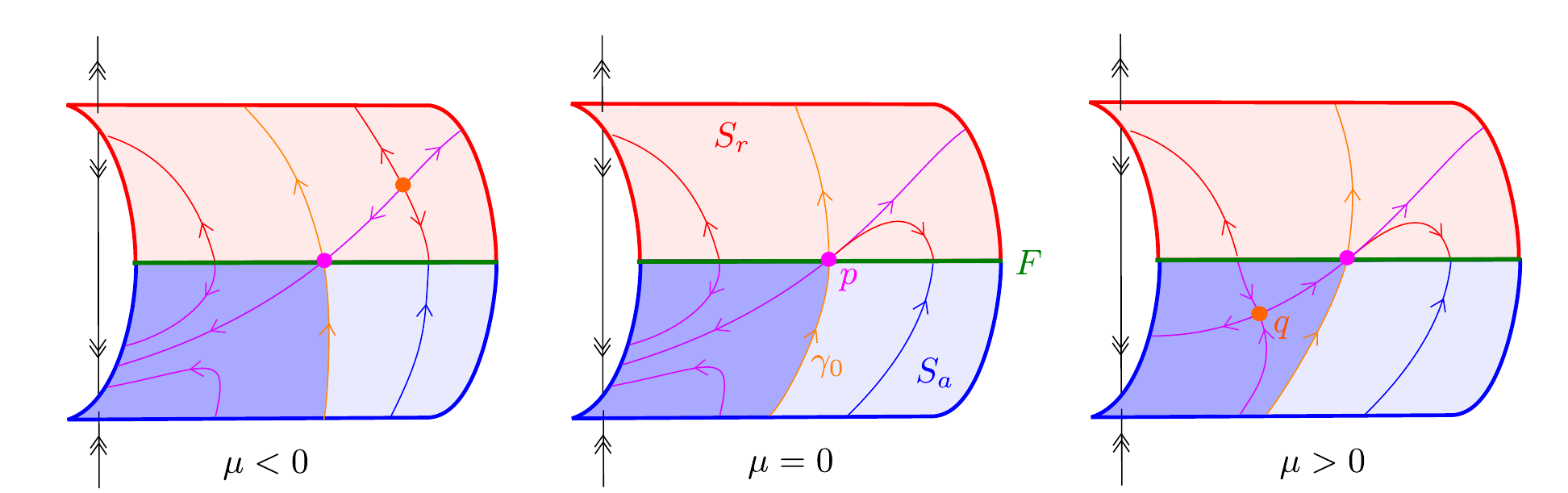}}
\end{center}
\caption{Illustration of the folded saddle-node of type II. It is a transcritical bifurcation of the desingularized reduced problem \eqref{reduced}. The dynamics illustrated here is in terms of the slow time. Consequently, in comparison with \eqref{reduced}, we have reversed the directions on the repelling sheet. The sign of $\lambda$ (see \eqref{assumption}) determines on what side of the two sheets the saddle point $q$ appears. For $\lambda<0$, shown in (a), $q$ lies on the repelling sheet $S_r$, appearing for $\mu\gtrsim 0$. For $\lambda>0$, shown in (b), $q$ lies on the the attracting sheet $S_a$, appearing for $\mu\lesssim 0$. In both cases, the folded singularity $p$ is a folded node on one side of $\mu$ and folded saddle on the other side. The strong singular canard $\gamma_0(\mu)$ divides $S_a$ into separate sets. In particular, for $\mu\ge 0$ and $\lambda<0$, the dark shaded region in (a) is a funnel region. Here all points pass through $p$ upon following the reduced flow. }
\figlab{fig:fsn}
\end{figure}


On the other hand, for each $\mu\sim 0$, there exists a strong stable manifold $\gamma_0(\mu)$ of $(y,z)=0$ for \eqref{reduced}, known as the strong singular canard in the $(x,y,z)$-space (orange in \figref{fig:fsn}). It is well-known \cite{szmolyan_canards_2001} that $\gamma_0(\mu)$ persists as a (maximal) canard $\gamma_{\epsilon}(\mu)$ connecting fixed copies of Fenichel slow manifolds $S_{a,\epsilon}$ and $S_{r,\epsilon}$ as perturbations of (appropriate) compact subsets $S_{a,0}$ and $S_{r,0}$ of $S_a:=S\cap \{z<0\}$ respectively $S_r:=S\cap \{z>0\}$ for all $0<\epsilon\ll 1$.

\subsection{Main result}\seclab{main}
In this paper, we are interested in canard cycles, i.e. periodic orbits that follow $\gamma_0(\mu)$ on $S$. In particular, under the assumption \eqref{assumption} and analyticity of $F$, $G$ and $H$, we prove the existence of a family of periodic orbits. 
 \responsenew{These periodic orbits are singular perturbations of a set of singular canard cycles, that we describe in the following lemma and illustrate in \figref{fig:Gamma0} (using the viewpoint in \figref{fig:fsn} and a projection onto the $(x,z)$-plane). 
\begin{lemma}\lemmalab{lemma0}
 There is an $h_1>0$ sufficiently small and a smooth function $\overline \mu_0:[0,h_1]\rightarrow \mathbb R$ such that for each $h\in (0,h_1]$, there is a singular canard cycle $\Gamma_{0,h}$ that intersects $z=0$ in $(x,y,z)=(-h^2,*,0)$ and is the union of (a) a segment of the orbit $\gamma_0(\overline \mu_0(h))$ of \eqref{reduced}$_{\mu=\overline\mu_0(h)}$, connecting $S_a$ and $S_r$ through $p$, and (b) a fast jump of \eqref{fast}$_{\epsilon=0,\mu=\overline \mu_0(h)}$.
\end{lemma}
\begin{proof}
See \secref{lemma0}.
\end{proof}
}

\begin{figure}[h!]
\begin{center}
{\includegraphics[width=.55\textwidth]{./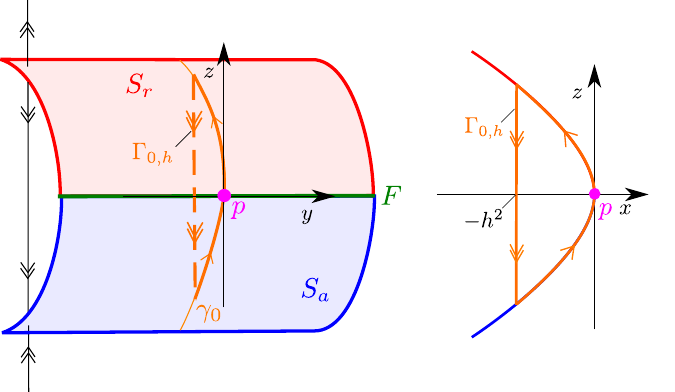}}
\end{center}
\caption{\responsenew{The singular cycle $\Gamma_{0,h}$, $h\in (0,h_1]$, using the viewpoint in \figref{fig:fsn} to the left and a projection onto the $(x,z)$-plane to the right, see \lemmaref{lemma0}}. }
\figlab{fig:Gamma0}
\end{figure}

\begin{thm}\thmlab{main}
\response{Consider \eqref{fast} as a normal form for the folded saddle-node of type II and suppose (a) that  \eqref{assumption} holds and (b) that $F$, $G$ and $H$ are analytic functions in the phase space variables $x$, $y$, $z$ with smooth dependency on $\epsilon$ and $\mu$.} Then there exists an $h_1>0$ such that for all $0<\epsilon\ll 1$ the following holds: There is a Hopf bifucation of \eqref{fast} at $\mu=\mu_H(\sqrt \epsilon)$, $\mu_H(0)=0$, and a family of periodic orbits $\Gamma_{\epsilon,h}$, \response{$h\in (0,h_1]$}, along $\mu=\overline \mu(\epsilon,h)$, where $\overline \mu$ is continuous and satisfies $\overline \mu(\epsilon,0)\equiv \mu_H(\sqrt \epsilon)$ \responsenew{and $\overline \mu(0,h)\equiv\overline \mu_0(h)$}. Moreover, for each $h\in (0,h_1]$ fixed, \responsenew{$\Gamma_{\epsilon,h}$ converge in the Haussdorff distance as $\epsilon\rightarrow 0$ to the singular canard cycle $\Gamma_{0,h}$.} The limit is uniform on compact subsets of $(0,h_1]$. 
\end{thm}

                           We do not aim to describe the stability of the cycles. This requires more work in general, but see \remref{stability} and \secref{discuss} below. 
To prove \thmref{main}, we  apply the following blowup transformation to the extended system (\eqref{fast},$\dot \epsilon=0$) of the folded singularity:
\begin{align*}
 (r,(\bar x,\bar y,\bar z,\bar \epsilon))\mapsto \begin{cases}
                                                  x &=r^2 \bar x,\\
                                                  y&=r\bar y,\\
                                                  z &=r\bar z,\\
                                                  \epsilon &=r^2\bar \epsilon.
                                                 \end{cases}
\end{align*}
for $r\ge 0$, $(\bar x,\bar y,\bar z,\bar \epsilon)\in S^3$ and $\mu\sim 0$. Here we follow the blowup used in \cite{szmolyan_canards_2001}. In particular, in contrast to \cite{krupa2008a}, we will \responsenew{not include $\mu$ in the transformation and consequently do not blowup $\mu = 0$} (at this stage, at least). We use two separate charts $\bar x=-1$ and $\bar \epsilon=1$ with chart-specific coordinates $ (\epsilon_1,r_1,y_1,z_1)$ and $ (r_2,x_2,y_2,z_2)$ defined by :
\begin{align}
 (\epsilon_1,r_1,y_1,z_1) \mapsto \begin{cases}
                                                  x &=-r_1^2,\\
                                                  y&=r_1y_1,\\
                                                  z &=r_1 z_1,\\
                                                  \epsilon &=r_1^2 \epsilon_1,
                                                 \end{cases}\eqlab{barxneg1}\\
    (r_2,x_2,y_2,z_2) \mapsto \begin{cases}
                                                  x &=r_2^2x_2,\\
                                                  y&=r_2y_2,\\
                                                  z &=r_2 z_2,\\
                                                  \epsilon &=r_2^2,
                                                 \end{cases}                                             \eqlab{bareps1}
\end{align}
respectively. The charts overlap on $\bar x<0$ and here the change of coordinates are given by the expressions:
\begin{equation}\eqlab{cc12}
\begin{aligned}
 r_2 & = r_1 \sqrt{\epsilon_1},\\
 x_2 &= -\epsilon_1^{-1},\\
 y_2 &= y_1/\sqrt{\epsilon_1},\\
 z_2 &= z_1/\sqrt{\epsilon_1},
\end{aligned}
\end{equation}
for $\epsilon_1>0$. 


In \cite{krupa2008a}, the authors also describe the Hopf bifurcation in the $\bar \epsilon=1$-chart. The new contribution of \thmref{main} is that we describe the family of periodic orbits bifurcating from the Hopf bifurcation in a full (i.e. $\epsilon$-independent) neighborhood of the folded saddle-node for all $0<\epsilon\ll 1$. This family includes periodic orbits with amplitude $h=\mathcal O(1)$ \responsenew{as perturbations of $\Gamma_{0,h}$ for all $0<\epsilon\ll 1$}.


Our approach is similar to the approach for the analysis of the canard explosion in the planar case, see \cite{krupa2001a}. The paper \cite{krupa2001a} also uses a Melnikov approach to extend the Hopf cycles in the associated scaling chart and then subsequently extend these to canard cycles by working in directional charts. The latter connection problem is already complicated in \cite{krupa2001a}. In the present paper, we feel that our proof in $\mathbb R^3$ is relatively simple. It basically extends the classical way of obtaining canard cycles in $\mathbb R^2$, by flowing points forward and backward along the attracting and repelling sheets and then extending this close to the folded saddle-node through blowup (using the chart $\bar x=-1$). Shilnikov variables \cite{deng1988a} and normal forms \cite{Guckenheimer97,ilyashenko1991a} are used to study the necessary transition maps. This approach could also be used as an alternative way \response{(that is potentially simpler)} to solve the connection problem in $\mathbb R^2$, although we have not attempted to do so. 

In contrast, the problem of connecting the Hopf cycles with the ones obtained by the Melnikov analysis is more complicated here in $\mathbb R^3$ than in the $\mathbb R^2$-context of \cite{krupa2001a}.
This is also related to our assumption on analyticity of $F$, $G$ and $H$ in \thmref{main}, which may seem unusual for results in this direction. To explain the difficulty, we recall from \cite{krupa2008a} that there is a one-dimensional critical manifold $C_2$ in the $\bar \epsilon=1$-chart for $\mu=r_2=0$ given by \responsenew{the graph} $x_2=-y_2^2,z_2=y_2$, \responsenew{over} $y_2\in \mathbb R$. The linearization \responsenew{around $C_2$} has imaginary eigenvalues at $y_2=0$ and the reduced problem has a hyperbolic equilibrium precisely at this point; this is what produces the Hopf bifurcation for all $0<r_2\ll 1$. However, it is nontrivial to study the Hopf cycles in a fixed (small) neighborhood of the Hopf bifurcation, since the eigenvalues are of the form $\pm (1+\mathcal O(r_2)) i, r_2 (\lambda+\mathcal O(r_2))$, i.e. a zero-Hopf bifurcation occurs at $\mu=r_2=0$. As we see it, there are two ways to perform the analysis of the Hopf cycles: (i) Perform a center manifold reduction and apply the Hopf bifurcation theorem there. Or: (ii) Straighten out the (strong) unstable/stable manifold ($\lambda\gtrless 0$, respectively), introduce polar coordinates in the transverse direction and apply Melnikov-like methods to construct the periodic orbits as fixed-points of a return map. However, both approaches are not uniform (at least in the smooth setting) with respect to $\epsilon\rightarrow 0$, due to the fact that the eigenvalue with the nonzero real part $\sim \epsilon\lambda$, providing the necessary hyperbolicity, goes to zero as $\epsilon\rightarrow 0$. 
Nevertheless, in the analytic case, we can extend the proof of the unstable/stable manifold to a fixed neighborhood (by following \cite[Section 3]{de2020a}), and this allows us to apply the approach (ii).  There is no clear way to obtain this result in the smooth setting,  since the standard proof of the unstable/stable manifold rests (more directly) upon exponential estimates (that are nonuniform in the present context). Having said that,  \cite{bonckaert1986a,bonckaert2005a} both study a zero-Hopf bifurcation (in the case of \cite{bonckaert2005a}, also at a parameter value $\epsilon=0$) in the $C^\infty$-setting and prove existence of a one-dimensional invariant curve tangent to the zero eigenspace. However, these results do not directly apply in our setting and the methods would have to be modified. For example, in \cite{bonckaert1986a} there is no parameter. Moreover, to bring the folded saddle-node near the Hopf bifurcation into the normal form in \cite{bonckaert2005a} we would have to perform an $\epsilon$-dependent scaling of the variables; notice in particular, that \cite[Equation (1)]{bonckaert2005a} does not depend upon the slow variable $z$ for $\epsilon=0$. We therefore leave the extension to $C^\infty$ (and potentially even to $C^k$) to future work. 

\subsection{Overview}\seclab{overview}
In the remainder of the paper, we work to prove \thmref{main}. First in \secref{small}, we describe the ``small periodic orbits'', extending all the way down to the Hopf bifurcation. \response{The reason for referring to these orbits of \eqref{fast} as small is that their amplitude is $o(1)$ with respect $\epsilon\rightarrow 0$.} Subsequently, in \secref{inter}, we describe the ``intermediate orbits'' that connect the small periodic orbits with the canard orbits of amplitude $O(1)$.  In \secref{completing}, we complete the proof of the theorem. Finally, in \secref{discuss} we conclude the paper through a discussion of the results and potential future work. 

\section{Existence of small periodic orbits}\seclab{small}
\response{
In the following, we first (see \secref{eqn2}) revisit the most basic results (including the Hopf bifurcation) of the system: 
%
\begin{equation}\eqlab{eqn2}
\begin{aligned}
 \dot x_2 &= y_2-(\mu+1)z_2 + r_2 F_2(x_2,y_2,z_2,r_2,\mu),\\
 \dot y_2 &= \frac12 \mu +r_2 a_1 y_2 + r_2 a_2 z_2 + r_2^2 \overline G_2(x_2,y_2,z_2,r_2,\mu),\\
 \dot z_2 &=x_2 + z_2^2+r_2 z_2 H_2(x_2,y_2,z_2,r_2,\mu).
\end{aligned}
\end{equation}
(The results of \secref{eqn2} can also be found in  \cite{krupa2008a,szmolyan_canards_2001}.)
The system \eqref{eqn2} is obtained from writing \eqref{fast} in the scaled coordinates $(x_2,y_2,z_2)$ defined by \eqref{bareps1}, and using a desingularization through division of the right hand side by $r_2=\sqrt{\epsilon}$. 
Here $F_2$, $\overline G_2$ and $H_2$ are smooth, in particular analytic in the space variables $x_2,y_2$ and $z_2$. As in \cite{krupa2008a}, we have put $a_1=\partial_y G(\textnormal{\textbf{0}})$, $a_2=\partial_z G(\textnormal{\textbf{0}})$. Recall then that 
\begin{align*}
 \lambda:=a_1+a_2\ne 0,
\end{align*}
by assumption \eqref{assumption}.
}


\response{Subsequently in \secref{existenceslow}, following the strategy (ii) (described in the last paragraph of \secref{intro}) for studying the Hopf bifurcation, we prove the existence (using \cite[Section 3]{de2020a}) of a slow manifold $Z_{2,r_2}$, see \propref{slowmanifold}, that is (a) analytic in the space variables and smooth in $\epsilon$ and $\mu_2$, and (b) a perturbation of a compact submanifold of the critical manifold 
\begin{align}\eqlab{C2set}
C_2=\{(x_2,y_2,z_2) \,:\, x_2=-y_2^2,z_2=y_2, \,y_2\in \mathbb R\}.
\end{align}
of \eqref{eqn2} for $r_2=\mu=0$, see \lemmaref{C2}. By straightening out $Z_{2,r_2}$, we ensure that polar coordinates in the normal directions are well-defined. This allows us to set up a return map, defined in a full neighborhood, which we describe using Melnikov theory in \secref{Melnikov}.} 
\response{The family of periodic orbits of \eqref{eqn2}, that we obtain in this way, become periodic orbits of \eqref{fast} (upon \textit{blowing down} using \eqref{bareps1}) with amplitudes of order $o(1)$ with respect to $\epsilon\rightarrow 0$ (hence: ``{small periodic orbits}''.}


\subsection{Analysis of \eqref{eqn2}}\seclab{eqn2}
For $\mu\sim 0$, $r_2\sim 0$ this system is slow-fast with $x_2$ and $z_2$ being fast and $y_2$ slow. In particular, $\mu=r_2=0$ gives  an associated layer problem
\begin{equation}\eqlab{layer2}
\begin{aligned}
 \dot x_2 &= y_2-z_2,\\
 \dot y_2 &=0,\\
 \dot z_2 &=x_2+z_2^2.
\end{aligned}
\end{equation}
\begin{lemma}\lemmalab{C2}
  \response{The set $C_2$ in \eqref{C2set} is a critical manifold of \eqref{layer2}} and it is normally attracting for $y_2<0$ and normally repelling for $y_2>0$. 
 
 On the other hand, $y_2=0$ is degenerate for $C_2$ with the linearization around $(0,0,0)$ having two imaginary eigenvalues $\pm i$. 
 In particular, \eqref{layer2} is time-reversible within $y_2=0$ and the orbit 
 \begin{equation}\eqlab{gamma20}
\begin{aligned}
 \gamma_{2,0}(0): \begin{cases}
            x_2(t_2)& = -\frac14 t_2^2+\frac12,\\
            z_2(t_2) &=\frac12 t_2,
           \end{cases}\quad t_2\in \mathbb R,
 \end{aligned}
 \end{equation}
 is a separatrix in the $(x_2,z_2)$-plane, separating closed periodic orbits $\phi_h(t)=(x_{2h}(t),z_{2h}(t))$,  $x_{2h}(0)=-h,z_{2h}(0)=0$ with period $T_0(h)>0$ for any $h>0$, from unbounded orbits. 

\end{lemma}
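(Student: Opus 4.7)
The plan is to verify the structure of $C_2$ and the dynamics of the layer system piece by piece. Everything but one step is direct computation; the main ingredient is the construction of an explicit first integral for the $y_2 = 0$ layer. First, substituting $x_2 = -y_2^2$, $z_2 = y_2$ into the right-hand side of \eqref{layer2} shows that $C_2$ consists of equilibria. The Jacobian of the fast $(x_2, z_2)$-subsystem along $C_2$, with $y_2$ treated as a parameter, is
$$J(y_2) = \begin{pmatrix} 0 & -1 \\ 1 & 2 y_2 \end{pmatrix},$$
whose characteristic polynomial is $\lambda^2 - 2 y_2 \lambda + 1$. For $|y_2|<1$ this yields the complex pair $y_2 \pm i\sqrt{1 - y_2^2}$, whose real part has the sign of $y_2$; this gives normal attractivity on $y_2 < 0$, normal repulsivity on $y_2 > 0$, and the purely imaginary eigenvalues $\pm i$ at $y_2 = 0$.

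Next, restricting to the invariant plane $y_2 = 0$ reduces \eqref{layer2} to $\dot x_2 = -z_2$, $\dot z_2 = x_2 + z_2^2$. This subsystem is invariant under the involution $(t, x_2, z_2) \mapsto (-t, x_2, -z_2)$, giving the reversibility. Substituting $\gamma_{2,0}(0)$ into both equations verifies that it is a solution, and eliminating $t_2$ yields the parabola $x_2 = \tfrac12 - z_2^2$.

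The remaining and principal task is to produce closed orbits inside this parabola. I would use the integrating-factor ansatz $H(x_2, z_2) = e^{\alpha x_2}\bigl(P(x_2) + Q(x_2) z_2^2\bigr)$, impose $\dot H \equiv 0$ along the $y_2 = 0$ flow, and match coefficients in powers of $z_2$ to arrive at
$$H(x_2, z_2) = e^{2 x_2}\bigl(z_2^2 + x_2 - \tfrac12\bigr),$$
with $\dot H \equiv 0$ then confirmed by direct differentiation. The separatrix $\gamma_{2,0}(0)$ is precisely $\{H = 0\}$. Since $\nabla H$ vanishes only at the origin, where the Hessian is positive definite and $H$ attains the value $-\tfrac12$, and since $H \to 0^{-}$ as $x_2 \to -\infty$ with $z_2$ bounded while $H \to +\infty$ as $|z_2| \to \infty$ or $x_2 \to +\infty$, the sub-level sets $\{H \le h_0\}$ for $h_0 \in (-\tfrac12, 0)$ are compact. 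Morse theory then identifies each level curve $\{H = h_0\}$ in this range with a simple closed orbit encircling the origin. The restriction $H(x_2, 0) = e^{2 x_2}(x_2 - \tfrac12)$ is strictly decreasing on $x_2 \le 0$, so each such orbit meets the half-line $\{z_2 = 0,\, x_2 < 0\}$ in a unique point $(-h, 0)$ with $h > 0$, defining the initial condition of $\phi_h$; its period $T_0(h) > 0$ is the return time along this compact curve. The sole non-routine ingredient is the first integral itself: the $y_2 = 0$ system is not visibly Hamiltonian, and the exponential integrating factor $e^{2 x_2}$ has to be uncovered through the ansatz rather than by inspection.
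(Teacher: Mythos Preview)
Your proof is correct and essentially coincides with the approach the paper defers to: the paper's proof is only two sentences, pointing to ``simple calculations'' and the reference \cite{krupa2001a}, where precisely the first integral $H(x_2,z_2)=e^{2x_2}(z_2^2+x_2-\tfrac12)$ you construct is used to analyze this planar system. One small omission: your eigenvalue computation only treats $|y_2|<1$; for $|y_2|\ge 1$ the roots $y_2\pm\sqrt{y_2^2-1}$ are real with the same sign as $y_2$, so the normal attractivity/repulsivity claim indeed holds for all $y_2\ne 0$ as stated in the lemma.
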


\begin{proof}
The statement regarding the stability of $C_2$ follows from simple calculations. The analysis for $y_2=0$ is also straightforward, see \cite{krupa2001a}. 
\end{proof}
We illustrate the dynamics in the $(x_2,z_2)$-plane in \figref{fig:x2z2}. 
\begin{figure}[h!]
\begin{center}
{\includegraphics[width=.53\textwidth]{./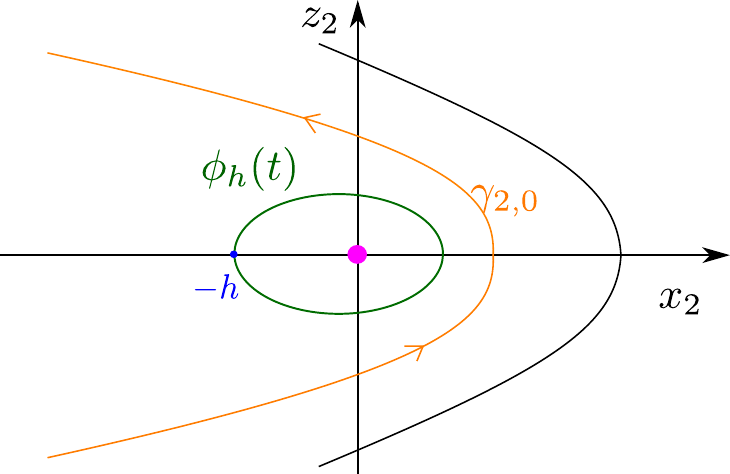}}
\end{center}
\caption{The layer problem \eqref{layer2} in the $(x_2,z_2)$-plane. The canard orbit $\gamma_{2,0}$ (orange) separates bounded periodic orbits $\phi_h$ (green) from unbounded orbits. }
\figlab{fig:x2z2}
\end{figure}
The solution \eqref{gamma20} for $\mu=0$ belongs to a $\mu$-family of solutions 
\begin{equation}\eqlab{gamma2}
\begin{aligned}
 \gamma_{2,0}(\mu): \begin{cases}
            x_2(t_2)& = -\frac14 t_2^2+\frac12,\\
            y_2(t_2) &=\frac{\mu}{2}t_2 ,\\
            z_2(t_2) &=\frac12 t_2,
           \end{cases}\quad t_2\in \mathbb R,
 \end{aligned}
 \end{equation}
 of \eqref{eqn2} for $r_2=0$. It corresponds to the blowup of the strong canard $\gamma_0(\mu)$, see \cite{szmolyan_canards_2001}.

To describe the reduced problem on $C_2$, we consider the scaling
\begin{align}
 \mu = r_2 \mu_2.\eqlab{mu2scaling}
\end{align}
Then with $\mu_2$ fixed,  $r_2=0$ implies $\mu=0$.
\cite{krupa2008a} also uses this scaling, but it is strictly speaking not necessary for the present analysis. In fact, it will be crucial to our approach not to scale $\mu$ for the description of the intermediate periodic orbits that we study later in the paper. 

With \eqref{mu2scaling}, we obtain the following reduced problem on $C_2$:
\begin{align*}
 y_2' &= \frac12 \mu_2+\lambda y_2,
\end{align*}
in terms of a slow time.
For $\lambda\ne 0$, we have a hyperbolic equilibrium at 
\begin{align}\eqlab{y2eq}
y_2= -\frac{\mu_2}{2\lambda}.
\end{align}
Consequently, for $\mu_2=0$ this equilibrium lies at $y_2=0$, corresponding to the degenerate point of $C_2$. This gives rise to the (singular) Hopf bifurcation \cite{guckenheimer2008a}. 

\subsection{Existence of an analytic slow manifold}\seclab{existenceslow}
For any $\nu>0$, $k\in \mathbb N$, let 
\begin{align*}
 \Omega^k(\nu):=\left\{h=\sum_{n=0}^\infty h_n (\cdot)^n :\overline{B_\nu}\rightarrow \mathbb C^k \,\vert \, h_n\in \mathbb R^k,\,\response{\Vert h\Vert_{\Omega^k}} <\infty\right\},
\end{align*}
with $\overline{B_\nu}$ being the closure of the open disc $B_\nu\subset \mathbb C$ of radius $\nu$ centered at the origin, and where
\begin{align}
 \Vert \sum_{n=0}^\infty h_n (\cdot)^n \Vert_{\Omega^k}:=\sum_{n=0}^\infty \vert h_n\vert \nu^n.\eqlab{Omega1Norm}
\end{align}
In other words, $\Omega^k(\nu)$ consists of absolutely convergent power series on $\overline{B_\nu}$ with real coefficients. Consequently, if $h\in \Omega^k(\nu)$ then the resulting function $h:B_\nu\rightarrow \mathbb C^k$ is real analytic \response{(here real refers to the fact that $h(v)$ is real when $v$ is so)}. Moreover, any real analytic function, having \response{a real and} absolutely convergent power series at the origin with radius of convergence $\rho>\nu$, belongs to $\Omega^k(\nu)$. 

\begin{lemma}
\response{$(\Omega^k(\nu),\Vert \cdot \Vert_{\Omega^k})$} is a Banach space.
\end{lemma}
\begin{proof}
Suppose that $h^m=\sum_{n=0}^\infty h_n^m (\cdot)^n\in \Omega^k(\nu)$ is a Cauchy sequence:
\response{
\begin{align*}
 \Vert h^{m_k}-h^{m_j}\Vert_{\Omega^k} = \sum_{n=0}^\infty \vert h_n^{m_k}-h_n^{m_j}\vert \nu^n\rightarrow \infty,
\end{align*}using the definition of the norm \eqref{Omega1Norm},
for $m_k,m_j\rightarrow \infty$. It follows that $\{h_n^m \nu^n\}_{n=0}^\infty$ is a Cauchy sequence in the complete sequence space $l_1$}, which therefore converges: \response{$\lim_{m\rightarrow \infty} \{h_n^m\nu^n\}_{n=0}^\infty=:\{h_n^\infty\nu^n \}_{n=0}^\infty$. Let $h^\infty:=\sum_{n=0}^\infty h_n^\infty (\cdot)^n\in \Omega^k(\nu)$. Then 
\begin{align*}
 \lim_{m\rightarrow \infty}\Vert h^m -h^\infty \Vert_{\Omega^k} =\lim_{m\rightarrow \infty}\sum_{n=1}^\infty \vert h^m_n-h^\infty_n\vert \nu^n =0,
\end{align*}
and $h^m=\sum_{n=0}^\infty h_n^m (\cdot)^n$ therefore converges to $h^\infty$. Consequently, $\Vert \cdot \Vert_{\Omega^k}$ is a Banach norm on $\Omega^k(\nu)$ as claimed.}
\end{proof}


In the following, it will be \response{convenient} to write $u:=(x_2,z_2)\in \mathbb R^2$.
\begin{proposition}\proplab{slowmanifold}
Consider \eqref{eqn2} with \eqref{mu2scaling} and $F_2$, $G_2$ and $H_2$ all smooth functions, specifically analytic in $x_2,y_2,z_2$.
Fix $k\in \mathbb N$. Then there are constants $\delta>0$ and $\nu>0$, both sufficiently small, such that the following holds: There exists a locally invariant one-dimensional manifold $Z_{2,r_2}$ of \eqref{eqn2} of the graph form:
\begin{align*}
u = m(y_2,r_2,\mu_2),
\end{align*}
where $(r_2,\mu_2)\mapsto m(\cdot,r_2,\mu_2)\in \Omega^2(\nu)$ is $C^k$ for $0\le r_2\ll 1$, $\mu_2\in [-\delta,\delta]$, satisfying $m(y_2,0,\mu_2)=(-y_2^2,y_2)$ such that $Z_{2,0}$ is a submanifold of $C_2$.

\end{proposition}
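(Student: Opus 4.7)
The plan is to construct $Z_{2,r_2}$ as a graph $u=m_0(y_2)+r_2\hat w(y_2,r_2,\mu_2)$ over a fixed disk $\overline{B_\nu}$, where $m_0(y_2)=(-y_2^2,y_2)$ parametrizes $C_2$ near the origin, and to obtain $\hat w$ as a fixed point of the invariance equation in (a scale of) the Banach spaces $\Omega^2(\nu)$. The structural point that makes this possible, despite the loss of normal hyperbolicity of $C_2$ at $y_2=0$, is that the transverse linearization of the fast field along $m_0$ is the matrix
\begin{equation*}
A(y_2)=\begin{pmatrix}0&-1\\ 1&2y_2\end{pmatrix},
\end{equation*}
with determinant $\equiv 1$ and therefore uniformly invertible on $\overline{B_\nu}$. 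In the analytic category this invertibility plays the role that normal hyperbolicity plays in Fenichel's smooth theory.

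Concretely I would proceed as follows. (i) Derive the invariance equation $\partial_{y_2}m\cdot\dot y_2 = R(y_2,m,r_2,\mu_2)$, where $R=(\dot x_2,\dot z_2)$ as in \eqref{eqn2}, and observe that at $r_2=0$ both sides vanish identically on $m=m_0$. (ii) Isolate the prefactor $r_2$ by writing $\dot y_2 = r_2\widetilde Y(y_2,m_2,r_2,\mu_2)$ and $R(y_2,m_0,r_2,\mu_2)=r_2\widetilde R(y_2,r_2,\mu_2)$. (iii) Substitute $m=m_0+r_2\hat w$ and Taylor-expand $R$ about $u=m_0$ to recast the invariance equation as the fixed-point equation
\begin{equation*}
\hat w = A^{-1}\bigl[(\partial_{y_2}m_0+r_2\,\partial_{y_2}\hat w)\widetilde Y-\widetilde R\bigr] - r_2 A^{-1}\widetilde Q(y_2,\hat w,r_2,\mu_2)=:T(\hat w),
\end{equation*}
where $\widetilde Q$ collects the quadratic Taylor remainder in $u$. (iv) Use the analyticity of $F_2,\overline G_2,H_2$ in the spatial variables, together with the Banach-algebra property of $\Omega^2(\nu)$, to see that $\hat w\mapsto \widetilde Q(y_2,\hat w,\cdot\,)$ preserves $\Omega^2(\nu)$ and is Lipschitz on small balls.

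The main technical obstacle is the appearance of $r_2\,\partial_{y_2}\hat w$ in $T$, since differentiation is unbounded on a single $\Omega^2(\nu)$. I would resolve this by working in the scale $\{\Omega^2(\nu)\}_{\nu>0}$ and using the Cauchy-type estimate $\|\partial_{y_2}\hat w\|_\nu \le (\nu'-\nu)^{-1}\|\hat w\|_{\nu'}$ for $\nu<\nu'$, which is exactly the analytic invariant-manifold construction of \cite[Section 3]{de2020a}. Because the offending derivative is multiplied by $r_2$, iterating $T$ on a nested sequence $\nu_\star=\nu_0>\nu_1>\cdots\to\nu>0$ with summable losses $\nu_n-\nu_{n+1}$ yields a genuinely contracting scheme for $r_2$ sufficiently small, and produces a unique fixed point $\hat w^\star\in\Omega^2(\nu)$ of norm $O(1)$. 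Setting $m=m_0+r_2\hat w^\star$ gives the desired graph, with $m(y_2,0,\mu_2)=m_0(y_2)$ by construction, so $Z_{2,0}\subset C_2$. The $C^k$-dependence on $(r_2,\mu_2)$ follows by differentiating the fixed-point equation in parameters and rerunning the same scale argument in the Banach space of $C^k$-maps $[0,r_0]\times[-\delta,\delta]\to\Omega^2(\nu)$; the contraction constants are uniform in $(r_2,\mu_2)$ because the spatial analyticity provides estimates independent of parameters.
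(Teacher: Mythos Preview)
Your reduction to a fixed-point equation is sound and you have identified one of the two structural ingredients, namely the uniform invertibility of $A(y_2)$. But the scale-of-spaces scheme you propose does not close: a Picard iteration that loses one derivative per step cannot converge on a nested sequence $\nu_0>\nu_1>\cdots\to\nu>0$ with summable gaps, no matter how small $r_2>0$ is. The Lipschitz constant of your $T$ from $\Omega^2(\nu_n)$ to $\Omega^2(\nu_{n+1})$ is of order $r_2/(\nu_n-\nu_{n+1})$, so convergence of the Picard differences would require $\prod_{n\le N} r_2/(\nu_n-\nu_{n+1})\to 0$. However, the AM--GM inequality gives $\prod_{n\le N}(\nu_n-\nu_{n+1})\le (S/N)^N$ with $S=\sum(\nu_n-\nu_{n+1})<\infty$, hence $\prod_{n\le N} r_2/(\nu_n-\nu_{n+1})\ge (r_2 N/S)^N\to\infty$ for every fixed $r_2>0$. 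The prefactor $r_2$ alone is therefore not enough to make the scheme contract on any fixed target space; this is the standard obstruction that forces KAM-type quadratic schemes in genuinely derivative-losing problems.

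The paper (and \cite[Section 3]{de2020a}, which is not a Cauchy--scale argument) avoids this by exploiting a second structural feature that your formulation hides. After the shift $v=y_2+\mu_2/(2\lambda)$ the slow equation reads $\dot v=r_2(v+\mathcal O(r_2,\tilde u))$, so the principal linear part of the invariance equation is the \emph{Euler} operator $r_2 v\,\partial_v - A(0)$, not just the matrix $A$. This operator is diagonal on monomials with symbol $r_2 k I - A(0)$ at degree $k$; since the eigenvalues of $A(0)$ are uniformly bounded away from the real axis, one has $\|(r_2 k I-A(0))^{-1}\|\le C/(r_2 k+1)$. Inverting the full Euler operator thus produces a bounded inverse $T$ on the \emph{single} space $\Omega^2(\nu)$ together with the smoothing estimate $\|T(r_2 P\,\partial_v F)\|\le C\nu^{-1}\|P\|\,\|F\|$: the decay $1/(r_2 k+1)$ exactly cancels the factor $r_2 k$ coming from $r_2\partial_v$. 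The remaining nonlinearity is then a genuine contraction on a ball in $\Omega^2(\nu)$ for $\nu,r_2$ small, and no scale of spaces is needed. The missing idea in your approach is precisely this shift to the slow equilibrium: without it your $\widetilde Y$ carries the nonzero constant term $\tfrac12\mu_2$, the coefficient of $\partial_{y_2}\hat w$ does not vanish at the basepoint, and there is no Euler structure available to absorb the derivative.
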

\begin{proof}

 The proof follows \cite{de2020a}, see specifically \cite[Lemma 3.25]{de2020a}. Although this lemma addresses existence of a fixed point in the Borel plane, we can adapt the overall construction. Moreover, many of the estimates we derive draw inspiration from \cite{de2020a}. 

 Firstly, seeing that the linearization around $C_2$ has two eigenvalues that are bounded away from $0\in \mathbb C$, it is standard (see e.g. \cite{de2020a,kristiansenwulff}) that there is a formally invariant manifold 
\begin{align*}
 u = \sum_{n=0}^\infty r_2^n h_n(y_2,\mu_2),
\end{align*}
with the right hand side to be understood as a formal power series. Each $h_n$ is analytic in $y_2$, smooth in $\mu_2$. Specifically, $h_0(y_2,\mu_2)= (-y_2^2,y_2)$. Now define $\tilde u$ by 
\begin{align*}
 u = h_0(y_2,\mu_2)+r_2 h_1(y_2,\mu_2) + \tilde u,
\end{align*}
Then upon dividing the right hand side by $\lambda$ and setting $$v=y_2+\frac{1}{2\lambda}\mu_2,$$ we obtain
%
%
 \begin{equation}\nonumber
 \begin{aligned}
  \dot{\tilde u} &= \left(A(v,\mu_2)+\mathcal O(r_2,\tilde u)\right)\tilde u + \mathcal O(r_2^{2}),\\
  \dot v &=r_2 \left(v + \mathcal O(r_2,\tilde u)\right),
 \end{aligned}
 \end{equation}
where
\begin{align*}
 A(v,\mu_2) = \lambda^{-1}\begin{pmatrix}
                  0 & -1\\
                  1 & 2v-\frac{1}{\lambda}\mu_2 
                 \end{pmatrix},
\end{align*}
Now drop the tilde on $u$ and let 
\begin{align*}
 u = r_2 \tilde u. 
\end{align*}
This gives 
 \begin{equation}\eqlab{tildeuv}
 \begin{aligned}
  \dot{\tilde u} &= A(v,\mu_2)\tilde u + r_2 R_2(\tilde u,v,r_2,\mu_2) ,\\
  \dot v &=r_2 \left(v + r_2 P_2(\tilde u,v,r_2,\mu_2)\right)\response{.}
 \end{aligned}
 \end{equation}
  For simplicity, we again drop the tilde on $u$. $R_2$ and $P_2$ are analytic functions in the phase space variables $u,v$, depending smoothly on $r_2,\mu_2\sim 0$. 
  
Notice that the eigenvalues of $A(v,\mu_2)$ are $\pm \lambda^{-1} i+\mathcal O(v,\mu_2)$. Consequently, for all $\mu_2$ sufficiently small, we have that 
\begin{align*}
 (q I-A(0,\mu_2))^{-1}
\end{align*}
exists for all $q\in \mathbb R$.  
\begin{lemma}\lemmalab{qIAinv}
 Fix $\delta>0$ small enough. 
 Then there exists a constant $c>0$ such that
\begin{align*}
 \Vert (qI-A(0,\mu_2))^{-1}\Vert \le \frac{c}{\vert q\vert +1},
\end{align*}
for all $q\in \mathbb R$ and all $\mu_2 \in [-\delta,\delta]$. \response{Here $\Vert \cdot \Vert$ denotes the operator-norm induced from the Euclidean norm $\vert \cdot\vert$ on $\mathbb R^2$.}

\end{lemma}
\begin{proof}
Let $\chi(\mu_2)$ and $\overline \chi(\mu_2)$ denote the complex conjugated eigenvalues of $A(0,\mu_2)$. We have 
\begin{align}
\response{\chi(0)=i.} \eqlab{chi0}
\end{align}
The existence of $\tilde c>0$ such that 
\begin{align*}
 \Vert (qI-A(0,\mu_2))^{-1}\Vert \le \tilde c \vert \chi(\mu_2) - q\vert^{-1},
\end{align*}
for all $q\in \mathbb R$ and all $\mu_2\in [-\delta,\delta]$ is elementary, provided $\delta>0$ is small enough. Moreover, using \eqref{chi0} and continuity of $\chi$, we have
\begin{align*}
 \vert \chi(\mu_2)-q\vert>\vert \frac12 i - q\vert>\frac14 (\vert q\vert +1),
\end{align*}
for all $q\in \mathbb R$ and all $\mu_2\in [-\delta,\delta]$, upon restricting $\delta>0$ further if necessary. The result therefore follows, with $c=4\tilde c$.
\end{proof}

  In the following, we will suppress the dependency of $\mu_2\in [-\delta,\delta]$. Specifically, we therefore write $A(v,\mu_2)$ as $A(v)$. 
    Subsequently, we then write \eqref{tildeuv} as a first order system in the following form:
\begin{equation}\eqlab{eqnsm}
\begin{aligned}
 r_2 v \frac{du}{dv} -A(0) u&= \left(A(v)-A(0)\right) u+ r_2 R_2(u,v,r_2)\\
 &- r_2^2 P_2(u,v,r_2)\frac{du}{dv}.
\end{aligned}
\end{equation}
We will prove the existence of the invariant manifold by solving this equation through a fixed-point argument on a closed subset of $\Omega^2(\nu)$.
  
%
%

Now, consider first
\begin{align}
 r_2v\frac{du}{dv} - A(0) u =F(v),\eqlab{lineqn}
\end{align}
with $F:=\sum_{n=0}^{\infty}F_n (\cdot)^n\in \Omega^2(\nu)$. Then we have the following:
\begin{lemma}\lemmalab{Tmap}
Let $0\le r_2\ll 1$ and define the linear operator $T_n:\mathbb R^2\rightarrow \mathbb R^2$ by 
\begin{align*}
 T_n(F_n) = (r_2 \response{n} I - A(0))^{-1} F_n,\quad F_n\in \mathbb R^2.
\end{align*}
Then
\begin{align}
\Vert T_n\Vert \le \frac{c}{r_2n+1},\eqlab{Tkmap}
\end{align}
with $c>0$ from \lemmaref{qIAinv}
and 
\begin{align}
 u(v) =T(F)(v):= \sum_{n=0}^{\infty}T_n(F_n) v^n,\eqlab{Tdef}
\end{align}
is the unique real-analytic solution of \eqref{lineqn} in $\Omega^2(\nu)$. 
The operator $T:\Omega^2(\nu)\rightarrow \Omega^2(\nu)$ defined by \eqref{Tdef} is linear and continuous: 
\begin{align}
 \Vert T(F) \Vert_{\Omega^2} \le c\Vert F\Vert_{\Omega^2},\eqlab{Tcont}
\end{align}
for all $F\in \Omega^2(\nu)$. 

\end{lemma}
\begin{proof}
 Straightforward. \eqref{Tkmap} follows from \lemmaref{qIAinv} which gives \eqref{Tcont} upon using \eqref{Tdef}. 
\end{proof}
The following is also important. 

\begin{lemma}\lemmalab{estTfp}
 Suppose that $P\in \Omega^1(\nu),F \in \Omega^2(\nu)$. Then $T(r_2 P F')\in \Omega^2(\nu)$ and
 \begin{align*}
  \Vert T(r_2 P F') \Vert_{\Omega^2} \le c \nu^{-1} \Vert P\Vert_{\Omega^1} \Vert F\Vert_{\Omega^2},
 \end{align*}
 \responsenew{for all $0\le  r_2\ll 1$}.
\end{lemma}
\begin{proof}
Write 
 $P(v) = \sum_{n=0}^\infty P_{n}v^n$ and $F(v)=\sum_{n=0}^\infty F_n v^n$. Then  $F'=\sum_{n=0}^\infty (n+1) F_{n+1} (\cdot)^n:B_\nu\rightarrow \mathbb C$ is analytic. Moreover, by Cauchy's product rule
\begin{align*}
T\left(r_2 P F'\right)(v) =r_2 \sum_{n=0}^\infty T_n\left(\sum_{l=0}^{n+1} P_{n+1-l} l F_l \right)v^n, 
\end{align*}
such that 
\begin{align}
 \vert T\left(r_2 P F'\right)(v)\vert &\le  \sum_{n=0}^\infty \Vert T_n\Vert r_2(n+1)\sum_{l=0}^{n+1}  \vert P_{n+1-l} \vert \vert F_l\vert\nu^k\nonumber\\
 &\le c \sum_{n=0}^\infty \sum_{l=0}^{n+1}  \vert P_{n+1-l}\vert \vert F_l \vert \nu^n\nonumber\\ 
 &\le c \nu^{-1} \sum_{n=0}^\infty \vert P_{n}\vert \nu^n \sum_{n=0}^\infty \vert F_{n}\vert \nu^n\nonumber\\
 &= c \nu^{-1} \Vert P\Vert_{\Omega^1} \Vert F\Vert_{\Omega^2},\eqlab{estterm3}
\end{align}
for all $v\in B_\nu$ and all $0\le r_2\ll 1$. Here we have used \eqref{Tkmap}.
\end{proof}
Finally, we state the following regarding composition of analytic function with $u\in \Omega^2(\nu)$. 
\begin{lemma}\lemmalab{composition}
Fix $k\in \mathbb N$ and set $$V(u,v)= \sum_{\vert l\vert=0}^\infty \sum_{m=0}^\infty  V_{lm} u^l v^m,$$ where
\begin{align*}
 l:=(l_1,l_2)\in \mathbb N_0,\quad \vert l\vert= l_1+l_2,\quad u^l :=u_1^{l_1}u_2^{l_2},
\end{align*}
and 
$$V_{lm}\in \mathbb R^k,\quad \vert V_{lm}\vert \le K \rho_V^{-\vert l\vert-m},$$ for some $K>0,\rho_V>0$, and all $l\in \mathbb N_0^2,m\in \mathbb N$. 

 Next, consider $$u=\sum_{n=0}^\infty h_n(\cdot)^n \in \Omega^2(\nu),$$ and suppose that 
 \begin{align}
 0<\nu<\rho_V,\quad \Vert u\Vert_{\Omega^2}\le \sigma<\rho_V.\eqlab{ucond}
 \end{align} Then the following holds
%
 \begin{align}
  V\left(\sum_{n=0}^\infty h_n(\cdot)^n,\cdot\right)\in \Omega^k(\nu),
 \end{align}
with
\begin{align*}
 \Vert V\left(\sum_{n=0}^\infty h_n(\cdot)^n,\cdot\right)\Vert_{\Omega^k} \le \frac{K}{(1-\rho_V^{-1}\sigma)^2(1-\rho_V^{-1}\nu)}.
\end{align*}

 \end{lemma}
\begin{proof}
The result follows from a direct calculation: We write $$V\left(\sum_{n=0}^\infty h_n v^n,v \right)=\sum_{n=0}^\infty p_n v^n,$$ by expanding out and using Cauchy's product rule. Let $h_n=(h_n^{(1)},h_n^{(2)})$. Then we find
\begin{align*}
 p_n = \sum_{\vert l\vert=0}^\infty  \sum_{m=0}^t h_m^{(1)}(l_1) \sum_{s=0}^{t-m} h_s^{(2)}(l_2) V_{l(t-m-s)},
\end{align*}
where $h_n^{(i)}(m)\in \mathbb R$, $n\in \mathbb N_0$, $i=1,2$, $m\in \mathbb N_0$, are given by 
\begin{align*}
 \left(\sum_{n=0}^\infty h_n^{(i)} v^n\right)^{m}=:\sum_{n=0}^\infty h_n^{(i)}(m) v^n,
\end{align*}
and Faa di Bruno's formula.
Subsequently, we can then estimate. Using the same approach as in \lemmaref{estTfp} (but going the other way), together with \eqref{ucond}, we find that:
\begin{align*}
 \Vert V\left(\sum_{n=0}^\infty h_n(\cdot)^n,\cdot \right)\Vert_{\Omega^k} &=\sum_{n=0}^\infty \vert p_n\vert \nu^n\\
 &\le \sum_{\vert l\vert=0}^\infty  \sum_{m=0}^\infty  \vert V_{lm}\vert \sigma^{\vert l\vert} \nu^m\\
 &\le K\sum_{\vert l\vert,m=0}^\infty \rho_V^{-\vert l\vert-m} \sigma^{\vert l\vert} \nu^m\\
 &= K \left(\sum_{n=0}^\infty \left(\rho_V^{-1}\sigma\right)^n\right)^2 \sum_{m=0}^\infty \left(\rho_V^{-1}\nu\right)^m\\
 &\le \frac{K}{(1-\rho_V^{-1}\sigma)^2(1-\rho_V^{-1}\nu)},
\end{align*}
as desired.
\end{proof}

To solve \eqref{eqnsm}, we write it as a fixed-point equation:
\begin{align*}
 u = \mathcal L(u),
\end{align*}
where $\mathcal L$ is the nonlinear operator defined by 
\begin{align}
 \mathcal L:\,u\mapsto T\left((A(v)-A(0))u+r_2 R_2(u,v,r_2)-r_2^2 P_2(u,v,r_2) \frac{du}{dv}\right).\eqlab{Leqn}
\end{align}
We will consider the closed subset $$\Omega_1^2(\nu,\sigma)\subset \Omega^2(\nu),$$ defined by $\Vert m\Vert_{\Omega^2} \le \sigma$, $m\in \Omega^2(\nu)$. \response{$\Omega_1^2(\nu,\sigma)$ is -- by the completeness of $\Omega^2(\nu)$ -- a complete metric space.}
\begin{lemma}
 There exist constants $\nu>0$, $\sigma>0$ and $r_{20}>0$ such that $\mathcal L$ is a contraction on $\Omega_1^2(\nu,\sigma)$ for all $0<r_2\le r_{20}$.
\end{lemma}
\begin{proof}
First, we show that $\mathcal L:\Omega_1^2(\nu,\sigma)\rightarrow \Omega_1^2(\nu,\sigma)$ is well-defined for $\nu>0$ and $\sigma>0$ small enough. For this, we use the linearity of $T$ and estimate each of the resulting terms. Notice that $R_2$ and $P_2$ are  real analytic functions, having power series representations with radius of convergence $\rho_{R_2},\rho_{P_2}\ge c>0$ (uniformly in $r_2,\mu_2\sim 0$). Consequently, upon composition with $u\in \Omega_1^2(\nu,\sigma)$ the resulting functions belong to $\Omega^2(\nu)$, $\Omega^1(\nu)$, respectively, for $\nu>0$ and $\sigma>0$ small enough, and are uniformly bounded there, see \lemmaref{composition}. 
We therefore have\response{
\begin{align}
 \Vert \mathcal L(u)\Vert_{\Omega^2} \le c_1 \nu \sigma +c_2 r_2+ c_3 \nu^{-1} \sigma r_2,\eqlab{Lest1}
\end{align}
for $c_1,c_2,c_3>0$ large enough and all $\nu,\sigma,r_2>0$ sufficiently small,}
upon estimating each of the resulting terms. In particular, the last term in the estimate \eqref{Lest1} comes from estimating 
%
%
%
 \begin{align*}
  T\left(r_2^2 P_2(u,v,r_2)\frac{du}{dv}\right),
 \end{align*}
 using \lemmaref{estTfp} and \lemmaref{composition} (for the uniform boundedness of $\Vert P_2\Vert$). 

Next, for the Lipschitz constant of $\mathcal L$, we proceed in an analogously way, writing
\begin{equation}\eqlab{LLips}
\begin{aligned}
 \mathcal L(u_1)-\mathcal L(u_2) &= T((A(v)-A(0))(u_1-u_2)) -T\left(r_2^2 P_2(u_1,v,r_2) \frac{d}{dv}(u_1-u_2)\right)\\
 &+T(r_2 (R_2(u_1,v,r_2)-R_2(u_2,v,r_2))+T\left(r_2^2 \left(P_2(u_2,v,r_2)-P_2(u_1,v,r_2)\right) \frac{du_2}{dv}\right).
\end{aligned}
\end{equation}
The first two terms can easily be estimated in $\Omega^2(\nu)$ by 
\begin{align*}
 c_4 \nu \Vert u_1-u_2\Vert_{\Omega^2},\quad  \frac12 c_5 \nu^{-1} r_2 \Vert u_1-u_2\Vert_{\Omega^2},
\end{align*}
respectively,  for some constants $c_4,c_5>0$ and all $\nu,\sigma,r_2>0$ small enough,
using $A(v)-A(0)=\mathcal O(v)$ and \lemmaref{estTfp} for the estimate of the second term. Now, regarding the estimate of the final two terms of \eqref{LLips}, we write $$V_2(u_1,v,r_2)-V_2(u_2,v,r_2)=\left[\int_0^1 D V_2(u_1(1-s)+u_2s,v,r_2)ds\right](u_1-u_2),$$ for $V=R,P$. By proceeding as in \lemmaref{composition}, using the analyticity of $R_2$, $P_2$, we find that $\left[\cdots\right]$ define bounded linear operators from $\Omega^2(\nu)$ to $\Omega^k(\nu)$, $k=1,2$, respectively, for all $u_1,u_2\in \Omega_1(\nu,\sigma)$ with $\nu,\sigma>0$ sufficiently small. Following \lemmaref{Tmap}, we then obtain an upper bound in $\Omega^2(\nu)$ for the sum of the final two terms of \eqref{LLips}:
\begin{align*}
 \frac12 c_5\nu^{-1} r_2 \Vert u_1-u_2\Vert_{\Omega^2},
\end{align*}
 upon increasing $c_5>0$ further if necessary.
This holds true for all for all $\nu,\sigma,r_2>0$ small enough.
In total, we have
\begin{align*}
  \Vert \mathcal L(u_1)-\mathcal L(u_2)\Vert_{\Omega^2}\le \left(c_4 \nu+c_5 \nu^{-1} r_2\right) \Vert u_1-u_2\Vert_{\Omega^2},
\end{align*}
for all $\nu,\sigma,r_2>0$ sufficiently small. 
This completes the proof.
%
\end{proof}
By Banach's fixed point theorem, we obtain a unique fixed-point of $\mathcal L:\Omega_1^2(\nu,\sigma)\rightarrow \Omega_1^2(\nu,\sigma)$. This fixed point $\tilde u=\tilde m(\cdot,r_2,\mu_2)\in \Omega_1^2(\nu,\sigma)$ gives our desired locally invariant manifold of \eqref{tildeuv}. Transforming the result back to the $(x_2,y_2,z_2)$-variables gives the desired statement of \propref{this} for $k=0$. To obtain the $C^k$-smoothness of $\tilde m(\cdot,r_2,\mu_2)$ with respect to $r_2,\mu_2$, we proceed in the usual way by differentiating \eqref{eqnsm}. This produces variational equations for the partial derivatives of $\tilde m$ and these equations can then be solved successively up to some fixed order $k\in \mathbb N$  using the same approach as for $k=0$. 
We leave out further details.

\end{proof}
The invariant manifold $Z_{2,r_2}$ is a slow manifold extending uniformly with respect to $r_2$ across the degenerate set $y_2=0$, $\mu_2=0$. It is a subset of the stable (unstable) set of \eqref{y2eq} for $\lambda<0$ ($\lambda>0$, respectively).

\subsection{Melnikov theory}\seclab{Melnikov}
We now straighten out the slow manifold of \propref{slowmanifold} by writing: 
\begin{align}
\begin{pmatrix}
    x_2\\
    z_2 
   \end{pmatrix} = m(y_2,r_2,\mu_2) + \tilde u.\eqlab{tildeust}
\end{align}
Then the invariant manifold corresponds to $\tilde u=0$ and consequently:
\begin{equation}\eqlab{tildeuy22}
\begin{aligned}
 \dot{\tilde u} &= B(\tilde u,y_2) \tilde u + r_2 R_2(\tilde u,y_2,r_2,\mu_2) \tilde u,\\
 \dot y_2 &=r_2\left(\frac12 \mu_2 + \lambda y_2+P_2(\tilde u,y_2,r_2,\mu_2)\right),
\end{aligned}
\end{equation}
where $P_2(0,y_2,0,\mu_2)=0$ and 
\begin{align*}
 B(\tilde u,y_2) = \begin{pmatrix}
                           0 & -1\\
                           1 & 2y_2+\tilde u_2
                          \end{pmatrix},
\end{align*}
for $\tilde u=(\tilde u_1,\tilde u_2)$. For $r_2=y_2=0$, we have $m(0,0,\mu_2)=0$ and hence $\tilde u$ reduces to $(x_2,z_2)$ in this case. 

We now again drop the tildes.  The $u$-plane is therefore for $r_2=0$ also filled with the periodic orbits $\phi_h(t)=(x_{2h}(t),z_{2h}(t))$, intersecting the negative $x_2$-axis in $(-h,0)$ with $h>0$. Moreover, due to the invariance of $u=0$, the return map from $u_1<0,u_2=0$ to itself, mapping $(-h,0,y_{20})$ to $(u_1(T),0,y_2(T)$, with $T=T(h,y_2,r_2,\mu_2)$ being the transition time, is well-defined for all $h>0$, $0\le r_2\ll 1$, $\mu_2 \in [-\delta,\delta]$. In fact, polar coordinates in the $u$-plane is well-defined for all $y_2\in [-\nu,\nu]$, $\mu_2 \in [-\delta,\delta]$, and as a result we obtain:
\begin{lemma}\lemmalab{extension}
The return map has a smooth extension to $h=0$ with $T(0,y_2,r_2,\mu_2)=2\pi+\mathcal O(y_2,r_2)$. 
\end{lemma}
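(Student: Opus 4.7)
The plan is to recast the planar $u$-flow in polar coordinates and use the fact that, at $u=0$, $y_2=0$, $r_2=0$, the matrix $B(0,0)=\bigl(\begin{smallmatrix}0 & -1\\ 1 & 0\end{smallmatrix}\bigr)$ generates pure rotation with angular speed $1$, so that the transition map extends smoothly to $\rho=0$ via a ``time by angle'' substitution.

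First I would set $u_1=-\rho\cos\theta$, $u_2=\rho\sin\theta$, so that the cross-section $\{u_1<0,\,u_2=0\}$ corresponds to $\theta=0$ with $\rho=h$. Differentiating and using \eqref{tildeuy22}, a short computation gives
\begin{align*}
\dot\rho &= \rho\bigl(\sin\theta\cos\theta\,(2y_2+u_2)\bigr)+r_2\,\rho\,\Phi(\rho,\theta,y_2,r_2,\mu_2),\\
\dot\theta &= 1 -\sin^2\theta\,(2y_2+u_2)+r_2\,\Psi(\rho,\theta,y_2,r_2,\mu_2),
\end{align*}
for smooth functions $\Phi,\Psi$ which are $2\pi$-periodic in $\theta$. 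Thus $\dot\theta=1+\mathcal O(y_2,\rho,r_2)$, and in particular $\dot\theta$ is bounded away from zero on a neighborhood $\{\rho\le\rho_0,\,|y_2|\le\nu,\,0\le r_2\ll1\}$, and the invariance of $u=0$ together with $P_2(0,y_2,0,\mu_2)=0$ ensures $\rho=0$ is an invariant set of the $(\rho,y_2)$-subsystem.

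Next I would divide through by $\dot\theta$ to use $\theta$ as the independent variable. This yields a smooth (indeed analytic in $(\rho,y_2)$) system
\begin{equation*}
\frac{d\rho}{d\theta}=\frac{\dot\rho}{\dot\theta},\qquad \frac{dy_2}{d\theta}=\frac{\dot y_2}{\dot\theta},\qquad \frac{dt}{d\theta}=\frac{1}{\dot\theta},
\end{equation*}
on the above neighborhood, with $\rho=0$ still invariant. The return map in the original $(u,y_2)$-coordinates is the time-$2\pi$ map (in $\theta$) of this system restricted to the section $\{\theta\equiv 0\bmod 2\pi\}$. Since the right-hand sides are smooth up to and including $\rho=0$, the corresponding flow map depends smoothly on $(\rho,y_2,r_2,\mu_2)$ on a whole neighborhood of $\rho=0$, providing the required smooth extension of the return map to $h=0$.

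Finally, the transition time is
\begin{equation*}
T(h,y_2,r_2,\mu_2)=\int_0^{2\pi}\frac{d\theta}{\dot\theta\bigl(\rho(\theta),\theta,y_2(\theta),r_2,\mu_2\bigr)},
\end{equation*}
which, evaluated at $h=0$ (so $\rho(\theta)\equiv 0$), reduces by the expansion of $\dot\theta$ above to $\int_0^{2\pi}\bigl(1+\mathcal O(y_2(\theta),r_2)\bigr)\,d\theta=2\pi+\mathcal O(y_2,r_2)$, uniformly in $\mu_2\in[-\delta,\delta]$, as claimed. The main subtlety is verifying that $\dot\theta$ stays strictly positive uniformly (so that the angular-time reparametrization is legitimate on the entire cylinder $\theta\in[0,2\pi]$), but this is immediate from $\dot\theta=1+\mathcal O(y_2,\rho,r_2)$ once $\nu,\rho_0,r_2$ are taken sufficiently small; the analyticity in $u$ furnished by \propref{slowmanifold} then transfers to analyticity in $\rho$ (and hence smoothness through $\rho=0$) of the reparametrized system.
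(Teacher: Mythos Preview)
Your proposal is correct and follows exactly the approach the paper indicates: the paper simply remarks that ``polar coordinates in the $u$-plane is well-defined'' and states the lemma as a consequence, whereas you carry out the polar-coordinate computation explicitly (reparametrize by $\theta$, observe $\dot\theta$ is bounded away from $0$, and read off the time-$2\pi$ map). One small caution: with your sign convention $u_2=\rho\sin\theta$ the angular equation actually comes out as $\dot\theta=-1+\mathcal O(y_2,\rho,r_2)$ (the rotation of $B(0,0)$ is clockwise in $(u_1,u_2)$; compare \eqref{phih}), and the explicit trig factors in your $\dot\rho,\dot\theta$ are slightly off. This does not affect the argument --- all you need is $\vert\dot\theta\vert$ bounded below --- but you may want to fix the sign or switch to $u_2=-\rho\sin\theta$ so that $\dot\theta=1+\mathcal O(y_2,\rho,r_2)$ and $T=\int_0^{2\pi}d\theta/\dot\theta$ is literally correct.
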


We now obtain fixed points of the return map using Melnikov theory to perturb away from the family of period orbits $u=\phi_h(t)$, $h>0$, within $y_2=0$ for $\mu_2=r_2=0$. For this purpose,
let
\begin{align*}
 A_h(t):=\begin{pmatrix}
                           0 & -1\\
                           1 & 2z_{2h}(t)
                          \end{pmatrix},
\end{align*}
denote the linearization around $u=\phi_h(t)$, $y_2=r_2=0$.
 We then write 
\begin{align}
 u &= \phi_h(t)+\tilde u,\quad 
 y_2=\tilde y_2.\eqlab{tildeutildey}
\end{align}
and let $\tilde u(t,h,y_2,r_2,\mu_2), \tilde y_2(t,h,y_2,r_2,\mu_2)$ denote the solutions of the resulting differential equations:
\begin{align*}
\dot{\tilde u} &=A_h(t) \tilde u+\bigg\{B(\phi_h(t)+\tilde u,\tilde y_2)(\phi_h(t)+\tilde u)-A_h(t)\tilde \phi_h(t)- A_h(t)\tilde u\\
&+r_2 R_2(\phi_h(t)+\tilde u,\tilde y_2,r_2,\mu_2)(\phi_h(t)+\tilde u) \bigg\},\\
 \dot{\tilde y}_2&=r_2\left(\frac12 \mu_2 + \lambda \tilde y_2+P_2(\phi_h(t)+\tilde u,\tilde y_2,r_2,\mu_2)\right),
\end{align*}
with initial conditions $\tilde u(0,h,y_2,r_2,\mu_2)=0,\tilde y_2(0,h,y_2,r_2,\mu_2)=y_2$. We have
\begin{align*}
 \tilde u(t,0,y_2,r_2,\mu_2)\equiv 0,
\end{align*}
due to $\phi_0(t)\equiv 0$ and the invariance of $u=0$ for \eqref{tildeuy22}. 
 Let $\Phi_h(t,s)$ denote the state-transition matrix associated with $A_h(t)$. Then by variation of constants, we have that
 \begin{align}
 \tilde u(T,h,y_2,r_2,\mu_2) &= \int_0^{T}\Phi_h(T,t)  \left\{\cdots \right\}dt,
  \eqlab{tildeueqn}\\
 \tilde y_2(T,h,y_2,r_2,\mu_2)&=y_2+\int_0^T r_2\left(\cdots\right)dt,\nonumber
 \end{align}
 where
 \begin{align*}
  \left\{\cdots \right\}&=B(\phi_h(t)+\tilde u(t,h,y_2,r_2,\mu_2),\tilde y_2)(\phi_h+\tilde u(t,h,y_2,r_2,\mu_2))-A_h(t)\tilde \phi_h(t)\\
  &- A_h(t)\tilde u(t,h,y_2,r_2,\mu_2)+r_2 R_2(\phi_h(t)+\tilde u(t,h,y_2,r_2,\mu_2),\tilde y_2(t,h,y_2,r_2,\mu_2),r_2,\mu_2) \\
  &\times(\phi_h(t)+\tilde u(t,h,y_2,r_2,\mu_2)),\\
\left(\cdots\right)&=\frac12 \mu_2 + \lambda \tilde y_2(t,h,y_2,r_2,\mu_2)+P_2(\phi_h(t)+\tilde u(t,h,y_2,r_2,\mu_2),\tilde y_2(t,h,y_2,r_2,\mu_2),r_2,\mu_2).
 \end{align*}
 Recall that $T(h,y_2,r_2,\mu_2)$ denotes the transition time, with $T_0(h):=T(h,0,0,0)$ being the period of the periodic orbits $\phi_h(t)=(x_{2h}(t),z_{2h}(t))$. Since the eigenvalues of $B(0,0)$ are $\pm i$, we have 
\begin{align}
 \phi_h(t) = -h\begin{pmatrix} \cos t+\mathcal O(h)\\ \sin t+\mathcal O(h))\end{pmatrix},\eqlab{phih}
\end{align}
for $h\rightarrow 0^+$. Moreover: 
 \begin{align}
  T_0(0):=\lim_{h\rightarrow 0^+}T_0(h)= 2\pi,\eqlab{T00}
 \end{align}
 see also \lemmaref{extension}.
We then consider the adjoint system:
\begin{align*}
 \dot \psi &= -A_h(t)^T \psi,
\end{align*}
and let $\psi_{h}(t)$ denote the solution with $\psi_h(T)=(1,0)$. Then a simple calculation shows the following:
\begin{lemma}
\begin{align*}
 \psi_{h}(t) &= -h^{-1} e^{\int_t^T 2z_{2h}(s)ds}  \begin{pmatrix}
                  z_{2h}'(t)\\
                  -x_{2h}'(t)
                 \end{pmatrix}.
\end{align*}
Moreover,
 \begin{align}
 \psi_{h}(T) \cdot  \tilde u(T,h,y_2,r_2,\mu_2)  =
 \int_0^{T}\psi_{h}(t) \cdot  \left\{\cdots\right\} dt.\eqlab{phihdotu}
\end{align}
\end{lemma}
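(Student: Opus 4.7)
The lemma splits into two essentially independent pieces: (a) an explicit formula for the scalar solution $\psi_h$ of the adjoint equation, and (b) a standard variation-of-constants/adjoint-pairing identity. I will verify each directly.

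For part (a), the strategy is to exploit the fact that for the unperturbed Hamiltonian-like planar system within $y_2 = 0$, differentiation of the orbit $\phi_h(t) = (x_{2h}(t), z_{2h}(t))$ with respect to $t$ yields a nontrivial solution of the variational equation $\dot w = A_h(t) w$. Concretely, from $\dot x_{2h} = -z_{2h}$ and $\dot z_{2h} = x_{2h} + z_{2h}^2$ one checks, by differentiating once more, that $w(t) := (x_{2h}'(t), z_{2h}'(t))$ solves $\dot w = A_h(t) w$. To pass from this tangent solution to a solution of the adjoint equation $\dot\psi = -A_h(t)^T \psi$, I use the standard planar trick: rotate by $(a,b)\mapsto (b,-a)$ and multiply by the integrating factor $e^{\int_t^T \operatorname{tr} A_h(s)\,ds} = e^{\int_t^T 2 z_{2h}(s)\,ds}$, which compensates exactly for the nonzero trace of $A_h(t)$. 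A short direct substitution into the two scalar components of $\dot\psi = -A_h(t)^T \psi$ then verifies the proposed formula; the two resulting identities reduce respectively to $\ddot z_{2h} - 2 z_{2h} \dot z_{2h} = \dot x_{2h}$ and $\ddot x_{2h} = -\dot z_{2h}$, both of which are immediate consequences of the layer equations. The constant prefactor $-h^{-1}$ is finally fixed by evaluating at $t=T$: since $\phi_h$ returns to $(-h,0)$ at time $T$, the vector field there is $(-z_{2h}(T), x_{2h}(T) + z_{2h}(T)^2) = (0, -h)$, so that $-h^{-1}(z_{2h}'(T), -x_{2h}'(T)) = -h^{-1}(-h, 0) = (1,0)$ as required.

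For part (b), the identity is pure linear ODE theory. The variable $\tilde u(t, h, y_2, r_2, \mu_2)$ solves $\dot{\tilde u} = A_h(t)\tilde u + \{\cdots\}$ with $\tilde u(0) = 0$, so variation of constants gives
\begin{align*}
\tilde u(T, h, y_2, r_2, \mu_2) = \int_0^T \Phi_h(T,t)\, \{\cdots\}\, dt.
\end{align*}
Pairing with $\psi_h(T)$ and bringing the pairing inside the integral yields
\begin{align*}
\psi_h(T)\cdot \tilde u(T, h, y_2, r_2, \mu_2) = \int_0^T \bigl(\Phi_h(T,t)^T \psi_h(T)\bigr) \cdot \{\cdots\}\, dt.
\end{align*}
The remaining step is the standard adjoint identification $\Phi_h(T,t)^T \psi_h(T) = \psi_h(t)$, which follows because $\frac{d}{dt}\bigl(\Phi_h(T,t)^T \psi_h(t)\bigr) = -A_h(t)^T \Phi_h(T,t)^T\psi_h(t) + \Phi_h(T,t)^T(-A_h(t)^T\psi_h(t)) = 0$ once one uses $\partial_t \Phi_h(T,t) = -\Phi_h(T,t) A_h(t)$; evaluating at $t=T$ fixes the constant. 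Substituting this into the integral produces the claimed formula \eqref{eq:phihdotu}.

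I do not anticipate a genuine obstacle here: both parts reduce to direct verifications once the formula for $\psi_h$ is guessed correctly from the tangent solution. The only place where one has to be mildly careful is the sign bookkeeping in the integrating factor and in the rotation $(a,b)\mapsto (b,-a)$, and in making sure the normalization at $t=T$ uses the correct value of the vector field at the return point $(-h,0)$.
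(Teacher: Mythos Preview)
Your proposal is correct and follows the same approach the paper gestures at (the paper simply says ``this is standard'' and cites the adjoint identity $\psi_h(T)^T\Phi_h(T,s)=\psi_h(s)^T$), only with full details supplied. The explicit verification in part~(a) via the tangent solution, the rotation by $J$, the integrating factor $e^{\int_t^T \operatorname{tr} A_h}$, and the normalization at $t=T$ using $\phi_h(T)=(-h,0)$ are all correct.

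There is one small slip in part~(b): the quantity $\Phi_h(T,t)^T\psi_h(t)$ is \emph{not} constant in $t$; the two terms you wrote down do not cancel. What is constant is $\Phi_h(t,s)^T\psi_h(t)$ (differentiate using $\partial_t\Phi_h(t,s)=A_h(t)\Phi_h(t,s)$, giving $\Phi_h^T A_h^T\psi_h - \Phi_h^T A_h^T\psi_h=0$), and evaluating at $t=s$ and $t=T$ yields $\psi_h(s)=\Phi_h(T,s)^T\psi_h(T)$, which is exactly the identity you need. Alternatively, just observe that $v(t):=\Phi_h(T,t)^T\psi_h(T)$ satisfies $\dot v=-A_h(t)^T v$ with $v(T)=\psi_h(T)$, hence $v=\psi_h$ by uniqueness. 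Either way the conclusion stands; only the bookkeeping in that one displayed derivative needs correcting.
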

\begin{proof}
 This is standard and follows from the theory of adjoint equations, see \cite{matsumoto1993a}.  In particular, for \eqref{phihdotu}, \responsenew{with $\tilde u$ given by \eqref{tildeueqn}}, we use $\psi_{h}(T)^T \Phi_h(T,s) = \psi_{h}(s)^T$. 
\end{proof}

We therefore define the Melnikov functions
\begin{align*}
 \Delta_1(h,y_2,r_2,\mu_2) & = \left[x_{2h}(T)-h\right]+ \int_0^{T}\psi_{h}(t) \cdot  \left\{\cdots\right\} dt,\\
 \Delta_2(h,y_2,r_2,\mu_2) &=\int_0^T \left(\cdots\right)dt.
\end{align*}
so that roots of $\Delta:=(\Delta_1,\Delta_2)$ for $h>0$ (using \eqref{tildeutildey}) correspond to fixed points of the return map and periodic orbits. We have:
\begin{lemma}\lemmalab{Deltah}
$\Delta$ also extends smoothly to $h=0$, with $\Delta_1(0,y_2,r_2,\mu_2)\equiv 0$ and $\Delta(h,0,0,0)\equiv 0$ for all $h\ge 0$.
\end{lemma}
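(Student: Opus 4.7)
The plan is to establish the three assertions of the lemma in turn: the smooth extension to $h=0$, the identity $\Delta_1(0,y_2,r_2,\mu_2)\equiv 0$, and $\Delta(h,0,0,0)\equiv 0$. The main obstacle is the first step, where the nominal $h^{-1}$ factor in $\psi_h(t)$ must be cancelled precisely against the linear vanishing of $\phi_h$ and its $t$-derivatives; once that is settled, the two vanishing identities follow cleanly from the invariances of $\{u=0\}$ and $\{y_2=0\}$ together with one short periodicity computation.

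For the smooth extension, the decisive observation is that $\phi_h(t)$ depends analytically on $h$ and satisfies $\phi_0\equiv 0$, so Hadamard's lemma gives $\phi_h(t)=h\,\hat\phi(t,h)$ with $\hat\phi$ real-analytic, and the same applies to $x_{2h}'(t)$ and $z_{2h}'(t)$. This cancels the $h^{-1}$ factor in the explicit formula for $\psi_h(t)$ exactly, yielding an analytic extension to $h=0$ with $\psi_0(t)=(\cos t,\sin t)^T$, in agreement with \eqref{phih}. The solution $\tilde u(t,h,y_2,r_2,\mu_2)$ is smooth in all parameters by standard ODE parameter dependence, and the transition time $T$ extends smoothly to $h=0$ by \lemmaref{extension}. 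Composing, both $x_{2h}(T)-h$ and the integrands $\psi_h\cdot\{\cdots\}$ and $(\cdots)$ are smooth on $\{h\ge 0\}$, and hence so are $\Delta_1,\Delta_2$.

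To prove $\Delta_1(0,y_2,r_2,\mu_2)\equiv 0$, I use that at $h=0$ the splitting \eqref{tildeutildey} degenerates to $u=\tilde u$ since $\phi_0\equiv 0$. Every term on the right hand side of \eqref{tildeuy22} carries an explicit factor of $u$, so $\{u=0\}$ is forward-invariant, and together with the initial condition $\tilde u(0)=0$ uniqueness forces $\tilde u(\cdot,0,y_2,r_2,\mu_2)\equiv 0$; consequently $\{\cdots\}$ vanishes identically along the orbit, $x_{20}(T)\equiv 0$, and $\Delta_1|_{h=0}=0$. For $\Delta(h,0,0,0)\equiv 0$, I use that at $(y_{20},r_2,\mu_2)=(0,0,0)$ the slow equation becomes $\dot y_2=0$, so $\{y_2=0\}$ is invariant and the unique solution starting at $(-h,0,0)$ is the closed orbit $\phi_h(t)$, returning to $(-h,0,0)$ after the period $T_0(h)$. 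Thus $\tilde u(\cdot,h,0,0,0)\equiv 0$, $\tilde y_2\equiv 0$, $T=T_0(h)$, the bracket $\{\cdots\}$ vanishes along the orbit by the same invariance consideration, and the return map acts as the identity at $h>0$, yielding $\Delta_1(h,0,0,0)=0$. The integrand for $\Delta_2$ reduces to $P_2(\phi_h(t),0,0,0)$; tracing the derivation of \eqref{tildeuy22} from \eqref{eqn2} through the slow-manifold straightening, together with the expansion $m(y_2,0,0)=(-y_2^2,y_2)$, gives $P_2(u,y_2,0,0)=a_2 u_2$, and periodicity together with $\dot x_{2h}=-z_{2h}$ from \eqref{layer2} yields $a_2\int_0^{T_0(h)}z_{2h}(t)\,dt=a_2[x_{2h}(0)-x_{2h}(T_0(h))]=0$, completing the proof.
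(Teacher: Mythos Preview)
Your proof is correct and follows the same outline as the paper's: invariance of $u=0$ together with \lemmaref{extension} for the smooth extension and $\Delta_1|_{h=0}=0$, and the fact that $\phi_h$ solves the layer problem in $\{y_2=0,\,r_2=0\}$ for $\Delta(h,0,0,0)=0$.

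Two points are worth noting. First, your route to the smooth extension is slightly different: you cancel the $h^{-1}$ in $\psi_h$ directly via Hadamard's lemma applied to $\phi_h$, whereas the paper sidesteps this by observing (implicitly) that $\Delta_1=u_1(T)-h$ is simply the first component of the return map minus the identity, whose smoothness at $h=0$ is the content of \lemmaref{extension}. Both are fine; yours works at the level of the integral representation, the paper's at the level of the map. Second, for $\Delta_2(h,0,0,0)=0$ you actually supply a step the paper leaves implicit: the paper's sentence ``since $u=\phi_h$ is a solution'' yields $\tilde u\equiv 0$ and $\tilde y_2\equiv 0$, hence $\Delta_1=0$ immediately, but $\Delta_2$ then reduces to $\int_0^{T_0(h)} P_2(\phi_h(t),0,0,0)\,dt$, which is not obviously zero. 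Your identification $P_2(u,y_2,0,\mu_2)=a_2 u_2$ (from tracing \eqref{eqn2} through the straightening with $m(y_2,0,\mu_2)=(-y_2^2,y_2)$) and the observation $\int_0^{T_0} z_{2h}\,dt=-[x_{2h}]_0^{T_0}=0$ close this cleanly.
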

\begin{proof}
 The extension of $\Delta$ follows from the invariance of $u=0$ of \eqref{tildeuy22} and the fact that $T$ extends smoothly to $h=0$, recall \lemmaref{extension}. The invariance of $u=0$ and the fact that $\phi_0(t)\equiv 0$ also gives $\{\cdots\}=0$ for $h=0$ and therefore $\Delta_1(0,y_2,r_2,\mu_2)\equiv 0$. Finally, $\Delta(h,0,0,0)=0$ since $u=(x_2,z_2)=\phi_h(t)$ is a solution of \eqref{layer2} within $y_2=0$ and $r_2=0$.
\end{proof}

Following this lemma, the function
\begin{align*}
 \widehat \Delta:=(h^{-1} \Delta_1,\Delta_2),
\end{align*}
is well-defined and smooth on $h\ge 0$. 
Moreover:
\begin{lemma}
$\widehat \Delta(h,0,0,0)\equiv 0$ and
\begin{align}
 \widehat \Delta_{y_2}'(h,0,0,0)&=\begin{pmatrix} -2 \int_0^{T_0(h)} e^{\int_t^T 2z_{2h}(s)ds} (h^{-1} z_{2h})(t)^2                                             
dt\\ *\end{pmatrix},\eqlab{Deltay2}\\
\widehat \Delta_{\mu_2}'(h,0,0,0) &= \begin{pmatrix}
                            0 \\
                            \frac12 T_0(h)
                           \end{pmatrix},\eqlab{Deltamu}
\end{align}
for all $h\ge 0$,
with $*$ denoting a quantity that is not important. 
\end{lemma}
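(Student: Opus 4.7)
The first claim, $\widehat\Delta(h,0,0,0)\equiv 0$, is immediate from \lemmaref{Deltah} together with the definition $\widehat\Delta=(h^{-1}\Delta_1,\Delta_2)$: since $\Delta_1(0,y_2,r_2,\mu_2)\equiv 0$, the quotient $h^{-1}\Delta_1$ extends smoothly to $h=0$, and $\Delta(h,0,0,0)\equiv 0$ then yields the identity on the whole line.

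For the derivatives at $(h,0,0,0)$, my plan is first to compute the variational derivatives of $\tilde u$ and $\tilde y_2$ along the base solution, at which $\tilde u\equiv 0$, $\tilde y_2\equiv 0$ and $T=T_0(h)$. Since $\dot{\tilde y}_2=r_2(\cdots)$, setting $r_2=0$ gives $\tilde y_2(t)\equiv y_2$ for every $y_2,\mu_2$, hence $\partial_{y_2}\tilde y_2\equiv 1$ and $\partial_{\mu_2}\tilde y_2\equiv 0$ along the base orbit. For $\tilde u$ I would note that $A_h$ is by construction the Jacobian of $B(\phi_h+\tilde u,0)(\phi_h+\tilde u)$ in $\tilde u$, so $\partial_{\tilde u}\{\cdots\}$ vanishes at the base point, and that $R_2$ enters only through the prefactor $r_2$. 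Consequently, the variational equation reduces to
\begin{align*}
\partial_t\partial_\alpha\tilde u = A_h(t)\,\partial_\alpha\tilde u + \partial_{\tilde y_2}\{\cdots\}\,\partial_\alpha\tilde y_2,\qquad \partial_\alpha\tilde u(0)=0,
\end{align*}
with $\partial_{\tilde y_2}\{\cdots\}=(0,2z_{2h}(t))^T$ coming from the $2y_2$-entry of $B$. It follows that $\partial_{\mu_2}\tilde u\equiv 0$, while $\partial_{y_2}\tilde u$ is the Duhamel integral with forcing $(0,2z_{2h})^T$.

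The second step is to apply Leibniz' rule to the integrals defining $\Delta_1,\Delta_2$. Three $T$-boundary contributions appear, and I claim each vanishes at $(h,0,0,0)$: $\{\cdots\}|_{(h,0,0,0)}\equiv 0$ in $t$ (because $u=\phi_h$ is an unperturbed solution of \eqref{layer2}), $(\cdots)|_{(h,0,0,0)}\equiv 0$ in $t$ (because $\mu_2=0$, $\tilde y_2\equiv 0$, and $P_2(0,y_2,0,\mu_2)\equiv 0$), and $x'_{2h}(T_0(h))=-z_{2h}(T_0(h))=0$ since $T_0(h)$ is the return time to the section $z_2=0$. Therefore, for $\alpha\in\{y_2,\mu_2\}$,
\begin{align*}
\partial_\alpha\Delta_1|_{(h,0,0,0)}&=\int_0^{T_0(h)}\psi_h(t)\cdot\partial_\alpha\{\cdots\}\,dt,\\
\partial_\alpha\Delta_2|_{(h,0,0,0)}&=\int_0^{T_0(h)}\partial_\alpha(\cdots)\,dt.
\end{align*}

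Finally, I would assemble the answer. Substituting the variational derivatives from step one and using $P_2(0,y_2,0,\mu_2)\equiv 0$ (which kills both $\partial_{y_2}P_2$ and $\partial_{\mu_2}P_2$ at the base point), the $\mu_2$-integrals yield $\partial_{\mu_2}\Delta_1\equiv 0$ and $\partial_{\mu_2}\Delta_2=\tfrac12 T_0(h)$ (from the explicit $\tfrac12\mu_2$-term in $(\cdots)$). For $\partial_{y_2}\Delta_1$ the integrand becomes $2z_{2h}(t)\psi_{h,2}(t)$, and plugging in the explicit formula $\psi_{h,2}(t)=-h^{-1}z_{2h}(t)e^{\int_t^T 2z_{2h}(s)ds}$ (using $x'_{2h}=-z_{2h}$) produces the claimed expression after dividing by $h$. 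The $*$-entry of $\widehat\Delta'_{2,y_2}$ would involve $\partial_{\tilde u}P_2(0,0,0,0)$ together with $\partial_{y_2}\tilde u$, which are not constrained by the hypotheses and are not needed. The only real subtlety is the Leibniz step: one must verify that all three $T$-boundary contributions vanish at the base point before differentiating under the integral, after which the remaining computation is essentially algebraic.
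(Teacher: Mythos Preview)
Your approach matches the paper's (which is terse: it dismisses \eqref{Deltamu} as ``obvious'' and for \eqref{Deltay2} gives only the direct adjoint computation you describe). However, one piece of your justification is wrong, even though the conclusions survive.

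The claim $(\cdots)|_{(h,0,0,0)}\equiv 0$ in $t$ is false: at the base point the first argument of $P_2$ is $\phi_h(t)+\tilde u(t)=\phi_h(t)$, \emph{not} $0$, so the property $P_2(0,y_2,0,\mu_2)=0$ does not apply. In fact, tracing \eqref{tildeuy22} back to \eqref{eqn2} gives $P_2(u,y_2,0,\mu_2)=a_2 u_2$, so $(\cdots)|_{(h,0,0,0)}=a_2 z_{2h}(t)$, which is not pointwise zero (though it integrates to zero over a period, consistent with $\Delta_2(h,0,0,0)=0$). For the same reason, invoking $P_2(0,y_2,0,\mu_2)=0$ to conclude $\partial_{\mu_2}P_2=0$ at the base is invalid reasoning.

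The correct fix is simple: at $r_2=0$ the transformation \eqref{tildeust} is $\mu_2$-independent (since $m(y_2,0,\mu_2)=(-y_2^2,y_2)$), hence so is the entire system \eqref{tildeuy22}. This gives $\partial_{\mu_2}P_2|_{r_2=0}=0$ and $\partial_{\mu_2}T|_{r_2=0}=0$ directly, so the Leibniz boundary term vanishes and your integrand reduces to $\tfrac12$ as claimed. (Alternatively, the boundary term for $\Delta_2$ also vanishes because $(\cdots)|_{t=T_0}=a_2 z_{2h}(T_0)=0$.) With this repair your argument is complete.
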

\begin{proof}
 $\widehat \Delta(h,0,0,0)\equiv 0$ follows from \lemmaref{Deltah} and \eqref{Deltamu} is obvious. To show \eqref{Deltay2}, we compute for $h>0$:
 \begin{align*}
  \frac{\partial \Delta_1}{\partial y_2}(h,0,0,0)&=\int_0^{T_0} \psi_h(t) \cdot \begin{pmatrix}
                                                                                0 & 0 \\
                                                                                0 & 2
                                                                               \end{pmatrix}\phi_h(t) dt\\
                                                                               &=  2h^{-1}  \int_0^T e^{\int_t^T 2z_{2h}(s)ds} x_{2h}'(t) z_{2h}(t)dt  \\
                                                                               &=-2h^{-1} \int_0^T e^{\int_t^T 2z_{2h}(s)ds} z_{2h}(t)^2dt.
 \end{align*}
 $h^{-1}z_{2h}$ extends to all $h\ge 0$ by \eqref{phih},
 which then completes the proof.
\end{proof}

The Jacobian matrix
\begin{align*}
 \widehat \Delta_{(y_2,\mu_2)}(h,0,0,0): = \begin{pmatrix}
                          \widehat\Delta_{y_2}'(h,0,0,0) & \widehat \Delta_{\mu_2}'(h,0,0,0)
                        \end{pmatrix},
\end{align*}
is therefore regular for all $h>0$, regardless of what $*$ is, also for $h=0$ due to \eqref{phih} and \eqref{T00}:
\begin{align*}
\widehat \Delta_{y_2}'(\textnormal{\textbf 0}) &= \begin{pmatrix}  -2\pi \\ *\end{pmatrix},\quad 
\widehat \Delta_{\mu_2}'(\textnormal{\textbf 0}) = \begin{pmatrix}
                            0 \\
                            \pi
                           \end{pmatrix}.
                           \end{align*}
                           \begin{proposition}\proplab{smallcycles}
                            Fix any $h_0>0$. Then there exist smooth functions $\overline y_2(h,r_2),\overline \mu_2(h,r_2)$ with $\overline y_2(h,0)=0$, $\overline \mu_2(h,0)=0$ such that $\widehat \Delta(h,\overline y_2(h,r_2),r_2,\overline \mu_2(h,r_2))=0$ for all $h\in [0,h_0]$ and all $0\le r_2\ll 1$.
                           \end{proposition}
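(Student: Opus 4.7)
The plan is to apply the implicit function theorem to $\widehat{\Delta}(h,y_2,r_2,\mu_2)$, solving $\widehat{\Delta}=0$ for $(y_2,\mu_2)$ as a function of $(h,r_2)$, uniformly for $h$ in the compact interval $[0,h_1]$. Most of the preparation has been done in the preceding lemmas: $\widehat{\Delta}$ extends smoothly across $h=0$ by \lemmaref{Deltah}, the identity $\widehat{\Delta}(h,0,0,0)\equiv 0$ supplies the base solution from which to perturb in $r_2$, and the partial derivatives in $y_2$ and $\mu_2$ along this base are explicit from \eqref{Deltay2}--\eqref{Deltamu}.

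The first step I would carry out is to verify that the Jacobian $\widehat{\Delta}_{(y_2,\mu_2)}(h,0,0,0)$ is invertible uniformly for $h\in[0,h_1]$. Arranging its columns as $\bigl(\widehat\Delta_{y_2}'(h,0,0,0),\,\widehat\Delta_{\mu_2}'(h,0,0,0)\bigr)$, the $(1,2)$-entry vanishes by \eqref{Deltamu}, so the matrix is lower-triangular and its determinant is simply the product of the diagonal entries,
\begin{align*}
\det \widehat{\Delta}_{(y_2,\mu_2)}(h,0,0,0) = -T_0(h)\int_0^{T_0(h)} e^{\int_t^{T_0(h)} 2z_{2h}(s)\,ds}\bigl(h^{-1}z_{2h}(t)\bigr)^2\,dt.
\end{align*}
The prefactor $T_0(h)$ is strictly positive, with $T_0(0)=2\pi$ by \eqref{T00}, and the integrand is nonnegative, smooth, and not identically zero in $t$. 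Using \eqref{phih}, the ratio $h^{-1}z_{2h}(t)$ extends continuously to $-\sin t$ at $h=0$, while $\int_t^{T_0(h)}2z_{2h}(s)\,ds=\mathcal O(h)$, so the determinant at $h=0$ evaluates to $-2\pi\int_0^{2\pi}\sin^2 t\,dt=-2\pi^2\ne 0$. Continuity of the determinant in $h$ then yields a uniform lower bound $|\det\widehat{\Delta}_{(y_2,\mu_2)}(h,0,0,0)|\ge c>0$ for all $h\in[0,h_1]$.

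The second step is the actual application of the implicit function theorem. For each $h_{*}\in[0,h_1]$, the local theorem produces smooth functions $(\overline y_2(h,r_2),\overline\mu_2(h,r_2))$ on some neighborhood of $(h_{*},0)$, with $\overline y_2(h,0)=\overline\mu_2(h,0)=0$. By compactness of $[0,h_1]$ and the uniform lower bound just established, a finite subcover produces a common $r_{20}>0$ such that these local pieces extend over all of $[0,h_1]\times[0,r_{20}]$, and uniqueness in the implicit function theorem ensures they agree on overlaps, yielding globally defined smooth $\overline y_2,\overline\mu_2$ with the required properties. I do not anticipate any essential obstacle at this stage: the only mild bookkeeping concerns smoothness at the boundary $h=0$, which is already handled by the smooth extension of $\widehat{\Delta}$ to $h\ge 0$ in \lemmaref{Deltah}, together with the uniform invertibility of the Jacobian.
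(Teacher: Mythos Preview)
Your proof is correct and follows exactly the approach the paper intends: the paper's own proof consists of the single line ``Follows from the implicit function theorem,'' with the regularity of the Jacobian $\widehat{\Delta}_{(y_2,\mu_2)}(h,0,0,0)$ for all $h\ge 0$ already spelled out in the text immediately preceding the proposition. You have simply written out these details carefully, including the uniform compactness argument over $[0,h_1]$.
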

                           \begin{proof}
                            Follows \responsenew{immediately} from the implicit function theorem.
                           \end{proof}
                                                    \begin{corollary}
                           Let $\mu_{H}(r_2):=r_2 \overline \mu_2(0,r_2)$. Then \eqref{eqn2} undergoes a Hopf bifurcation at $\mu=\mu_{H}(r_2)$ for $0<r_2\ll 1$. 
                           \end{corollary}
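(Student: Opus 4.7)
The plan is to show that at $\mu=\mu_H(r_2)$ the system \eqref{eqn2} admits an equilibrium with a pair of purely imaginary eigenvalues together with one small nonzero real eigenvalue, and that the family from \propref{smallcycles} realizes the bifurcating periodic orbits; in other words, \propref{smallcycles} evaluated at the boundary $h=0$ already encodes the Hopf bifurcation.

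First I would identify the equilibrium. Since $u=0$ is invariant for \eqref{tildeuy22}, $\Delta_1(0,y_2,r_2,\mu_2)\equiv 0$ by \lemmaref{Deltah}, so $\widehat{\Delta}=0$ at $h=0$ reduces to $\Delta_2(0,y_2,r_2,\mu_2)=0$; as $\tilde u\equiv 0$ when $\phi_0=0$, this is exactly the return condition $\tilde y_2(T)=y_2$ for the scalar $y_2$-flow on $u=0$, which singles out an equilibrium at $(u,y_2)=(0,\bar y_2(0,r_2))$ with $\mu_2=\bar\mu_2(0,r_2)$. Pulling back through \eqref{tildeust} yields an equilibrium of \eqref{eqn2} at $\mu=r_2\bar\mu_2(0,r_2)=\mu_H(r_2)$.

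Next I would linearize. Because $u=0$ is invariant, the Jacobian at this equilibrium is block triangular, with upper-left $2\times 2$ block $M(r_2) = B(0,\bar y_2(0,r_2)) + r_2 R_2(0,\bar y_2(0,r_2),r_2,\bar\mu_2(0,r_2))$ and lower-right scalar $r_2(\lambda+\mathcal{O}(r_2))$. At $r_2=0$ we have $M=\left(\begin{smallmatrix} 0 & -1 \\ 1 & 0 \end{smallmatrix}\right)$ with eigenvalues $\pm i$, so for $0<r_2\ll 1$ the eigenvalues of $M$ take the form $\alpha(r_2)\pm i\omega(r_2)$ with $\omega=1+\mathcal{O}(r_2)$ and $\alpha=\mathcal{O}(r_2)$, while the slow eigenvalue $r_2(\lambda+\mathcal{O}(r_2))$ is nonzero by \eqref{assumption}.

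Finally I would close the Hopf conditions. Since \propref{smallcycles} supplies a continuous family of periodic orbits with amplitude $h\to 0^+$ and period $T\to 2\pi+\mathcal{O}(r_2)$ accumulating on this equilibrium, a converse-Hopf argument (continuity of spectra, or direct Floquet comparison of the return-map linearization with $e^{M(r_2)T}$) forces $\alpha(r_2)=0$ at $\mu=\mu_H(r_2)$. Transversality $\partial_\mu\alpha\ne 0$ is immediate from $\mathrm{tr}\, M = 2\bar y_2(0,r_2)+\mathcal{O}(r_2)$ together with $\partial_\mu\bar y_2|_{r_2=0}=-1/(2\lambda)\ne 0$, which follows from the reduced equilibrium $y_2=-\mu_2/(2\lambda)$ and \eqref{assumption}. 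Combined with the nonzero slow eigenvalue $r_2\lambda$, this is the standard Hopf bifurcation at $\mu=\mu_H(r_2)$. The main obstacle is really the converse-Hopf step itself: it is clean for each fixed $r_2>0$ but non-uniform as $r_2\to 0$, because the slow eigenvalue $r_2\lambda$ also collapses onto the imaginary axis --- this is precisely the singular/zero-Hopf feature emphasized in the introduction, and is the source of the analytic hypothesis used in \propref{slowmanifold}.
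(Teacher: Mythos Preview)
Your argument is correct, but it is considerably more elaborate than the paper's, which is literally the one line ``The statement follows directly from \propref{smallcycles}.'' The paper is using ``Hopf bifurcation'' phenomenologically: a one-parameter family of periodic orbits collapsing onto an equilibrium as the parameter approaches a critical value. That is exactly what \propref{smallcycles} delivers at $h\to 0^+$, so nothing further is said.

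You instead verify the classical spectral characterization (purely imaginary pair, nonzero third eigenvalue, transverse crossing). This is a genuine addition and is carried out correctly: the block-triangular structure of the Jacobian on the invariant set $\tilde u=0$ is right, and the converse argument (a hyperbolic equilibrium admits no arbitrarily small periodic orbits for nearby parameter values, contradicting \propref{smallcycles}) cleanly forces $\alpha(r_2)=0$ at $\mu=\mu_H(r_2)$ for each fixed $r_2>0$. One notational slip: when you write $\partial_\mu\bar y_2|_{r_2=0}=-1/(2\lambda)$ you do not mean the function $\bar y_2(h,r_2)$ from \propref{smallcycles}, which has no $\mu$-dependence, but rather the derivative of the equilibrium position $y_2^*(\mu_2,r_2)$ on the slow manifold with respect to $\mu_2$; with that reading the transversality computation via $\operatorname{tr}M=2y_2^*+\mathcal O(r_2)$ is correct. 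Your closing caveat about non-uniformity in $r_2$ is apt but, as you note, irrelevant for the Corollary as stated, which is a statement for each fixed small $r_2>0$.
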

                           \begin{proof}
%
The statement follows directly from \propref{smallcycles}.
                           \end{proof}
\begin{remark}\remlab{stability}
                           We do not aim to describe the stability of the cycles. This is more difficult. However, sufficiently close to the Hopf bifurcation, one can describe stability in terms of the signs of $\lambda$ and $\frac{\partial^2 \overline \mu_2}{\partial h^2}(0,r_2)$; here the latter quantity relates directly to the first Lyapunov coefficient of the Hopf (within a center manifold). 
                           In particular, if $\lambda>0$ then $\mu_2\gtrless \mu_H(r_2)$ means that the equilibrium is on the attracting/repelling side of $Z_{2,r_2}$, respectively. Therefore sufficiently close to the equilibrium, the limit (Hopf) cycles are unstable/stable if $\frac{\partial^2 \overline \mu_2}{\partial h^2}(0,r_2)\gtrless 0$ for $\lambda>0$.                      
                             If $\lambda<0$, then the inequalities for $\mu_2$ regarding the ``normal stability'' of \eqref{y2eq} become $\mu_2 \lessgtr \mu_H(r_2)$ and the Hopf cycles are unstable/stable if $\frac{\partial^2 \overline \mu_2}{\partial h^2}(0,r_2)\lessgtr 0$ in this case.  
                            It is possible to compute a leading order expression for $\frac{\partial^2 \overline \mu_2}{\partial h^2}(0,r_2)$ but we chose not to include the complicated expression in the present paper.  
                            
                            Obviously, one could just compute the Lyapunov coefficient by applying the center manifold reduction. 
                            But -- due to the zero-Hopf bifurcation for $r_2=0$ --  this approach does not guarantee that this quantity describe the stability of limit cycles in a uniform neighborhood (i.e. one that does not shrink in the $(x_2,y_2,z_2)$-space as $\epsilon\rightarrow 0$). Our Melnikov approach using \propref{this} in the analytic setting does imply such uniformity.
                            \end{remark}

By  working in the chart $\bar x=-1$ in the following section, we are able to extend the \responsenew{small cycles, due to \propref{smallcycles}} to ``intermediate'' cycles that include $\mathcal O(1)$ canards.

\section{Existence of intermediate periodic orbits}\seclab{inter}
Our strategy for extending the small periodic orbits to intermediate ones, that connect to cycles of size $\mathcal O(1)$, follows the approach for proving canard cycles in $\mathbb R^2$: Working in the entry chart $\bar x=-1$, we fix a section along $z_1=0$, in a neighborhood of $(r_1,y_1,\epsilon_1)=0$ with $\mu\sim 0$, and flow the points forward and backward and then measure their separation on a section $\{z_2=0\}$ in the scaling chart $\bar \epsilon=1$ transverse to $\gamma_{2,0}(0)$, recall \eqref{gamma20}. Based upon expansions from the solution of a Shilnikov problem \cite{deng1988a}, we define appropriate scalings that allow us to solve for roots of the separation function by applying the implicit function theorem. 
\subsection{Analysis in the $\bar x=-1$-chart}
We first consider the $\bar x=-1$-chart. Inserting \eqref{barxneg1} into (\eqref{fast},$\dot \epsilon=0$) gives
\begin{equation}\eqlab{barxneg1eqns}
\begin{aligned}
\dot \epsilon_1&=\epsilon_1^2 \left[y_1-(\mu+1)z_1+r_1F_1(\epsilon_1,r_1,y_1,z_1,\mu)\right],\\
 \dot r_1 &=-\frac12 r_1 \epsilon_1 \left[y_1-(\mu+1)z_1+r_1F_1(\epsilon_1,r_1,y_1,z_1,\mu)\right],\\
  \dot y_1&=\epsilon_1 \left(\frac12 \mu+r_1 G_1(\epsilon_1,r_1,y_1,z_1,\mu)\right)+\frac12 \epsilon_1 y_1\left[y_1-(\mu+1)z_1+r_1F_1(\epsilon_1,r_1,y_1,z_1,\mu)\right] ,\\
 \dot z_1&=-1+z_1^2+r_1 z_1 H_1(\epsilon_1,r_1,y_1,z_1,\mu)+\frac12 \epsilon_1 z_1 \left[y_1-(\mu+1)z_1+r_1F_1(\epsilon_1,r_1,y_1,z_1,\mu)\right], 
\end{aligned}
\end{equation}
where $ F_1 = r_1^{-2} F$, $G_1=r_1^{-1} G$, and $H_1 = r_1^{-2} H$, each being smooth, also upon extending to $r_1=0$. The points $(0,0,y_1,- 1)$ are partially attracting, whereas $(0,0,y_1,1)$ are partially repelling. In particular, in each case the linearization has a single nonzero eigenvalue (positive/negative, respectively). Consequently, by center manifold theory we have the following.

\begin{lemma}\lemmalab{centermanifold}
Fix any $k\in \mathbb N$. Then there exists a constant $\sigma>0$ sufficiently small such that the following holds: There exist two three-dimensional center manifolds $M_{1,a}$ and $M_{1,r}$ of the sets $(0,0,y_1,- 1)$ and $(0,0,y_1,1)$, $y_1\in [-\sigma,\sigma]$, having the following graph representations:
 \begin{align*}
  M_{1,a}:\quad z_1 &= m_{1,a}(\epsilon_1,r_1,y_1,\mu),\\
  M_{1,r}:\quad z_1 &= m_{1,r}(\epsilon_1,r_1,y_1,\mu),
 \end{align*}
for $0\le r_1,\epsilon_1,\vert y_1\vert ,\vert \mu\vert\le \sigma$. The functions $m_{1,a}$ and $m_{1,r}$ are $C^k$-smooth functions on the specified domain and satisfy $m_{1,a}(0,0,y_1,\mu)\equiv -1,m_{1,r}(0,0,y_1,\mu)\equiv 1$ as well as
\begin{align}
 m_{1,a}\left(\epsilon_1,0,-\mu\sqrt{1+\frac{\epsilon_1}{2}},\mu\right)\equiv -\sqrt{1+\frac{\epsilon_1}{2}},\quad m_{1,r}\left(\epsilon_1,0,\mu\sqrt{1+\frac{\epsilon_1}{2}} ,\mu\right)\equiv \sqrt{1+\frac{\epsilon_1}{2}}.\eqlab{m1ar}
\end{align}
 
\end{lemma}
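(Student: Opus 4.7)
The plan is to invoke the parameterized center manifold theorem at the two curves of equilibria $\{(0,0,y_1,\mp 1):|y_1|\le\sigma\}$ of the system \eqref{barxneg1eqns}. First I would verify the spectrum of the linearization at a generic point of either curve: the $(\epsilon_1,r_1,y_1)$-directions each contribute a zero eigenvalue, since the corresponding components of the vector field carry an overall prefactor $\epsilon_1$ or $r_1\epsilon_1$ which vanishes on the critical set, while the $z_1$-direction gives the simple nonzero eigenvalue $\mp 2$ arising from the linearization of $-1+z_1^2$ about $z_1=\mp 1$. With $\mu$ retained as a parameter, the standard parameterized center manifold theorem (as in Vanderbauwhede--Iooss, or Carr) yields, for any prescribed $k\in\mathbb N$, a $C^k$-smooth three-dimensional locally invariant manifold tangent to the center eigenspace along the critical curve, defined on a uniform box $\{0\le\epsilon_1,r_1\le\sigma,\ |y_1|,|\mu|\le\sigma\}$ for $\sigma$ sufficiently small.

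Second, because the hyperbolic direction coincides with $z_1$ and the critical value is $\mp 1$, the implicit function theorem immediately places the manifold in the graph form $z_1=m_{1,a/r}(\epsilon_1,r_1,y_1,\mu)$ with the same $C^k$ regularity. The base identities $m_{1,a}(0,0,y_1,\mu)\equiv -1$ and $m_{1,r}(0,0,y_1,\mu)\equiv 1$ then follow at once from the observation that the critical curves consist entirely of equilibria, and are thus invariant sets tangent to the center eigenspace; the uniqueness of the graph representation forces these constant values.

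For the refined invariance \eqref{m1ar} I would identify the curves $\{(\epsilon_1,0,\pm\mu,\mp 1):\epsilon_1\in[0,\sigma]\}$ (possibly modulo a higher-order correction that is absorbed by the freedom in choosing the center manifold) as invariant loci of \eqref{barxneg1eqns} and invoke the same containment argument. Invariance of the hyperplane $r_1=0$ is immediate from the global factor of $r_1$ in $\dot r_1$. Within this hyperplane one then attempts to verify the tangency identities at the candidate curve by direct substitution into the reduced three-dimensional subsystem in $(\epsilon_1,y_1,z_1)$, exploiting the specific algebraic form of the normal form \eqref{fast}. I expect this verification to be the main obstacle: a naive substitution into $\dot z_1$ does not vanish at $(\epsilon_1,0,-\mu,-1)$, so the correct invariant locus must be extracted with a careful use of the algebraic characterisation of the singular canard $\gamma_{2,0}(\mu)$ (which in the slice $r_1=0$ satisfies $y_1=\mu z_1$ and approaches the candidate base point as $\epsilon_1\to 0$), combined with a judicious choice among the non-unique center manifolds so as to respect this additional invariance. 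Once an invariant curve with the stated property is in hand, its tangency to the center eigenspace at $\epsilon_1=0$ forces it to lie on $M_{1,a/r}$, and the graph property yields \eqref{m1ar}.
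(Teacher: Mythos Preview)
Your approach is essentially the paper's own: the paper simply invokes center manifold theory for the existence and graph form, and then (in the paragraph immediately following the lemma) attributes \eqref{m1ar} to the invariance of the transformed canard $\gamma_{2,0}(\mu)$ in the chart $\bar x=-1$.

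Two remarks. First, your observation that the naive substitution $(\epsilon_1,0,\mp\mu,\mp 1)$ does not make $\dot z_1$ vanish is correct and in fact sharper than the paper's wording. A direct computation (or the coordinate change \eqref{cc12} applied to \eqref{gamma2}) shows that within $r_1=0$ the invariant curve is
\[
z_1=\mp\sqrt{1+\tfrac12\epsilon_1},\qquad y_1=\mu z_1=\mp\mu\sqrt{1+\tfrac12\epsilon_1},
\]
so the identities \eqref{m1ar} hold only at $\epsilon_1=0$; the paper's phrasing ``become $r_1=0,\,y_1=\mp\mu,\,z_1=\mp 1$'' is the limiting statement. What is actually used downstream (e.g.\ in \eqref{h1a0prop}, \eqref{h1r0prop}) is only that $\gamma_{1,0,a/r}(\mu)\subset M_{1,a/r}$ and that it passes through $(\epsilon_1,r_1,y_1,z_1)=(0,0,\mp\mu,\mp 1)$, which your argument supplies.

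Second, your worry about needing a ``judicious choice'' of center manifold is unnecessary. The transformed canard $\gamma_{1,0,a}(\mu)$ is a genuine orbit that remains in a fixed neighborhood of the equilibrium curve for all backward time (indeed $\epsilon_1(t)\to 0$ and $(y_1,z_1)\to(-\mu,-1)$). Since the only nonzero eigenvalue at the equilibrium is $-2$, any such globally backward-bounded orbit lies in \emph{every} local center manifold: the normal component decays like $e^{-2t}$ forward in time, hence would blow up backward unless it is identically zero. The analogous statement holds for $M_{1,r}$ in forward time. So no selection among non-unique center manifolds is required; the containment is automatic.
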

%
The properties in \eqref{m1ar} are consequences of the invariant sets $\gamma_{2,0}(\mu)$ in the $\bar \epsilon=1$-chart, recall \eqref{gamma2}, which in the present chart become $r_1=0,y_1=\mp \mu \sqrt{1+\frac{\epsilon_1}{2}},z_1=\mp \sqrt{1+\frac{\epsilon_1}{2}}$ and $\epsilon_1\ge 0$; we denote these sets by $\gamma_{1,0,a}(\mu)$ and $\gamma_{1,0,r}(\mu)$, respectively. We illustrate the dynamics in the $\bar x=-1$-chart in \figref{entry1}.

\begin{figure}[h!]
\begin{center}
\subfigure[$\mu=0$]{\includegraphics[width=.25\textwidth]{./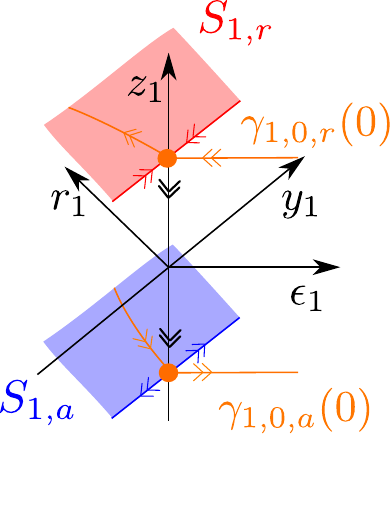}}
\subfigure[$\mu=\overline \mu_0(r_1)$]{\includegraphics[width=.25\textwidth]{./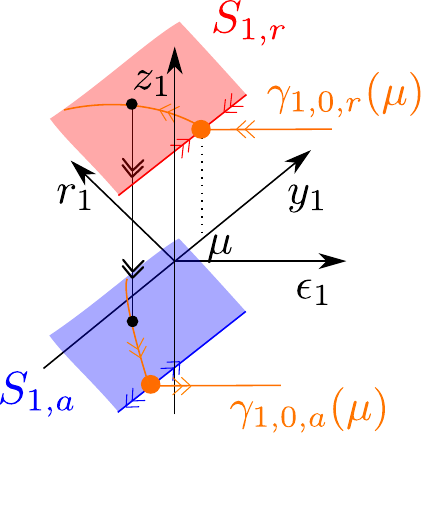}}
\end{center}
\caption{Dynamics in the entry chart $\bar x=-1$. In (a): for $\mu=0$. In (b): for $\mu=\mu_0(r_1,0)$. $\mu_0$ is defined later, but essentially $\mu_0(\tilde r_1,0)$ with $\tilde r_1>0$ is so that for $r_1=\tilde r_1$, $\mu=\mu(\tilde r_1,0)$ there is fast jump connecting $\gamma_0$ to itself.}
\figlab{entry1}
\end{figure}

As is standard, the center manifolds $M_{1,a}$, $M_{1,r}$ provide extensions of the family of Fenichel slow manifolds $\{(S_{a,\epsilon},\epsilon):\epsilon\in [0,\epsilon_0)\}$ as (\eqref{fast},$\dot \epsilon=0$) as foliations (through $r_1^2\epsilon_1=\epsilon=\text{const}.$) of the center manifolds \cite{dumortier_1996}. 

On $M_{1,a}$ and $M_{1,r}$ we have the following desingularized reduced problems
\begin{equation}\eqlab{redMa10}
\begin{aligned}
 \epsilon_1'& = \epsilon_1,\\
 r_1'&=-\frac12 r_1,\\
 y_1'&=\frac{\frac12 \mu+\mathcal O(r_1)}{y_1+(\mu+1)+\mathcal O(\epsilon_1,r_1)}+\frac12 y_1,
 \end{aligned}
 \end{equation}
%
and 
\begin{equation}\eqlab{redMr10}
\begin{aligned}
\epsilon_1'& =- \epsilon_1,\\
  r_1'&=\frac12 r_1,\\
  y_1'&=\frac{\frac12 \mu+\mathcal O(r_1)}{-y_1+(\mu+1)+\mathcal O(\epsilon_1,r_1)}-\frac12 y_1,
 \end{aligned}
\end{equation}
obtained by dividing the right hand sides by $\pm \epsilon_1 \left[y_1-(\mu+1)z_1+r_1F_1(\epsilon_1,r_1,y_1,z_1,\mu)\right]$ and then substituting $z_1=m_{1,a}(\cdots)$ and $z_1=m_{1,r}(\cdots)$, respectively. Notice that the square brackets  $\pm \left[y_1-(\mu+1)z_1+r_1F_1(\epsilon_1,r_1,y_1,z_1,\mu)\right]$ are positive near $M_{1,a}$ and $M_{1,r}$, respectively, and for $\epsilon_1>0$ the divisions therefore correspond to time transformations.

The system \eqref{redMa10} has a hyperbolic equilibrium at $(0,0,-\mu)$ with eigenvalues $-1/2,1,\lambda(\mu)$ where
\begin{align}
 \lambda(\mu) = \frac12 (1-\mu).\eqlab{lambdamu}
\end{align}
Similarly, \eqref{redMr10} has a hyperbolic equilibrium at $(0,0,\mu)$ with eigenvalues $\frac12,-1,-\lambda(\mu)$. There are therefore strong resonances of both systems when $\mu=0$.
\responsenew{
\subsubsection{Proof of \lemmaref{lemma0}}\seclab{lemma0}
We can complete the proof of \lemmaref{lemma0} and the existence of the singular cycle $\Gamma_{0,h}$ by working in the chart $\bar x=-1$. For $\epsilon_1=0$, $M_{1,a}$ and $M_{1,r}$ become $S_{1,a}$ and $S_{1,r}$, respectively. 
Moreover, $\gamma_0(\mu)\cap S_{a}$ and $\gamma_0(\mu)\cap S_r$ become stable and unstable manifolds of $(0,-\mu,-1)$ and $(0,\mu,1)$ for \eqref{redMa10}  and \eqref{redMr10}, respectively, contained within $\epsilon_1=0$. Due to the invariance of the transformation \eqref{barxneg1} with respect to the involution $(r_1,y_1,z_1)=(-r_1,-y_1,-z_1)$, we can write these stable and unstable manifolds as smooth graphs
\begin{align*}
 y_1=\mp Y(\pm r_1,\mu),
\end{align*}
respectively, over $r_1,\mu\sim 0$. Here $Y(0,\mu)\equiv \mu$. We therefore have a singular canard cycle $\Gamma_{0,h}$ intersecting $z=0$ in $(x,y,z)=(-h^2,*,0)$ if and only if 
\begin{align}
 K_0(h,\mu):=-Y(h,\mu)-Y(-h,\mu)=0.\eqlab{L0func}
\end{align}
(Here $*$ is a quantity that is not important.)
We have $K_0(0,0)=0$ and $\frac{\partial }{\partial \mu}K_0(0,0)=-2$.
We can therefore solve \eqref{L0func} for $\mu=\overline \mu_0(h)$ by the implicit function theorem.
\qed}

In the following, we now describe two transition mappings $P_{1,a}$ and $P_{1,r}$ from $\{z_1=0\}$ to $\{\epsilon_1=\epsilon_{11}\}$, \responsenew{$\epsilon_{11}>0$ fixed and small enough}, obtained by applying the forward and backward flow, respectively. See illustration in \figref{entry1}. We will describe each mapping in appropriate ``normal form coordinates''. Since the two mappings are similar, we focus on $P_{1,a}$.
\begin{lemma}\lemmalab{transformnearM1a}
 Consider \eqref{barxneg1eqns} near $z_1=-1$ and fix any $k\in \mathbb N$. Then for $\sigma>0$ small enough, there exists a regular transformation of time and a $C^{k+3}$-smooth diffeomorphism $(y_1,z_1)\mapsto (y_{1,a},z_{1,a})$, $y_1,z_1+1\in [-\sigma,\sigma]$, depending $C^{k+3}$-smoothly on $(\epsilon_1,r_1,\mu)$ for $0\le r_1,\epsilon_1,\vert \mu\vert \le \sigma$, such that 
 \begin{equation}\eqlab{y1az1a}
 \begin{aligned}
  y_{1,a} &=h_{1,a}^0(\epsilon_1,r_1,y_1,\mu)+\epsilon_1h_{1,a}^1(\epsilon_1,r_1,y_1,z_1,\mu),\\
  z_{1,a}&= z_1-m_{1,a}(\epsilon_1,r_1,y_1,\mu),
 \end{aligned}
 \end{equation}
 where
 \begin{align}\eqlab{h1a0prop}
 h_{1,a}^0(\epsilon_1,0,-\mu,\mu)\equiv 0, \quad \frac{\partial h_{1,a}^0}{\partial y_1} (\textnormal{\textbf{0}})=1,\quad \frac{\partial h_{1,a}^0}{\partial \mu} (\textnormal{\textbf{0}})=1,\end{align} and \eqref{barxneg1eqns} becomes
 \begin{equation}\eqlab{eqnsredMa1new2}
\begin{aligned}
 \dot \epsilon_1 &=\epsilon_1^2,\\
 \dot r_1 &=-\frac12 r_1 \epsilon_1,\\
 \dot{y}_{1,a} &=\left((\lambda(\mu) +r_1 L_0(r_1,y_{1,a},\mu) )y_{1,a}+r_1 \epsilon_1 L_1(\epsilon_1,r_1,y_{1,a},\mu)\right)\epsilon_1,\\
 \dot{z}_{1,a} &= (-2+L_2(r_1,y_{1,a},z_{1,a},\epsilon_1,\mu))z_{1,a},
 \end{aligned}
\end{equation}
with $L_0$, $L_1$, $L_2$ each $C^k$-smooth and $L_2(\textnormal{\textbf{0}})=0$.
 
\end{lemma}
\begin{proof}
Let $l$ be the value of $k\in \mathbb N$ in \lemmaref{centermanifold}. We will fix $l\gg 1$ in the following.
We then divide the right hand side by 
$$y_1-(\mu+1)z_1+r_1F_1(\epsilon_1,r_1,y_1,z_1,\mu),$$
which is positive near $M_{1,a}$. This defines our regular transformation of time.  
Consequently, 
\begin{equation}\nonumber
\begin{aligned}
\dot \epsilon_1&=\epsilon_1^2,\\
 \dot r_1 &=-\frac12 r_1 \epsilon_1,\\
\dot y_1&=\frac{\epsilon_1 \left(\frac12 \mu+r_1 G_1(\epsilon_1,r_1,y_1,z_1,\mu)\right)}{ y_1-(\mu+1)z_1+r_1F_1(\epsilon_1,r_1,y_1,z_1,\mu)}+\frac12 \epsilon_1 y_1 ,\\
 \dot z_1&=\frac{-1+z_1^2+r_1 z_1 H_1(\epsilon_1,r_1,y_1,z_1,\mu)}{ y_1-(\mu+1)z_1+r_1F_1(\epsilon_1,r_1,y_1,z_1,\mu)}+\frac12 \epsilon_1 z_1 ,\end{aligned}
\end{equation}
We then rectify $M_{1,a}$ to $z_{1,a}=0$ by a $C^{l}$-smooth transformation $z_1=m_{1,a}(\epsilon_1,r_1,y_1,\mu)+z_{1,a}$ and subsequently straighten out the associated stable fibers of $M_{1,a}$ by a $C^{l-3}$-transformation $(\epsilon_1,r_1,y_1,z_{1,a},\mu)\mapsto \tilde y_1$ such that $(\epsilon_1,r_1,\tilde y_1)'$ becomes independent on $z_{1,a}$, see \cite[Lemma 2.2]{deng1990a}.  Then 
\begin{equation}\nonumber
\begin{aligned}
\dot \epsilon_1&=\epsilon_1^2,\\
 \dot r_1 &=-\frac12 r_1 \epsilon_1,\\
  \dot{\tilde y}_1&= \epsilon_1\left(\frac{\frac12 \mu+\mathcal O(r_1)}{\tilde y_1+(\mu+1)+\mathcal O(\epsilon_1,r_1)}+\frac12 \tilde y_1\right),\\
 \dot z_{1,a}&=(-2+\mathcal O(\epsilon_1,r_1,\tilde y_{1},z_{1,a},\mu))z_{1,a}.
\end{aligned}
\end{equation}
By construction $r_1,\epsilon_1,\tilde y_1$ decouples. For $\epsilon_1=0$ it easy to see that $\tilde y_1=y_1$. This shows the form of \eqref{y1az1a} with the $z_1$-dependency of $y_{1,a}$  only entering the $\mathcal O(\epsilon_1)$-term. Indeed, in the remainder of the proof we only transform $\tilde y_1$ in order to normalize the reduced problem on $z_{1,a}=0$:
\begin{equation}\eqlab{redMa1}
\begin{aligned}
\epsilon_1'& = \epsilon_1,\\
r_1' &=-\frac12 r_1,\\
y_1'&=\frac{\frac12 \mu+\mathcal O(r_1)}{y_1+(\mu+1)+\mathcal O(\epsilon_1,r_1)}+\frac12 y_1,
\end{aligned}
\end{equation}
after dividing the right hand side by $\epsilon_1$. We have  here dropped the tilde on $y_1$ and the system then coincides (by construction) with \eqref{redMa10}. 

 Put $y_1=-\mu + \tilde y_1$.
Then
\begin{equation}\eqlab{M1aRedFull}
\begin{aligned}
 \epsilon_1'&=\epsilon_1,\\
 r_1' &=-\frac12 r_1,\\
 y_1' &=\lambda(\mu) y_1+L_1(\epsilon_1,r_1,y_1,\mu),
\end{aligned}
\end{equation}
after dropping the tildes,
with $L_1(\textbf{0},\mu)=\frac{\partial }{\partial y_1}L_1(\textbf 0,\mu)=0$.  First consider the $r_1=0$ subsystem:
\begin{equation}\eqlab{r1Eq0subsystem}
\begin{aligned}
 \epsilon_1'&=\epsilon_1,\\
  y_1' &=\lambda(\mu) y_1+L_1(\epsilon_1,0,y_1,\mu),
\end{aligned}
\end{equation}
having eigenvalues $1$, $\lambda(\mu)=\frac12(1-\mu)$. Due to the invariance of $\epsilon_1=0$, there are no resonant monomials for all $\mu\sim 0$ and
\begin{align*}
 \epsilon_1' &= \epsilon_1,\\
 y_1'&=\lambda(\mu)y_1,
\end{align*}
is a formal normal form. Following \cite{ilyashenko1991a} and \cite[Theorem 2.15 and section 2.7]{dumortier2006a}, the system \eqref{r1Eq0subsystem} can therefore be linearized by a $C^m$-transformation of the form $(\epsilon_1,y_1,\mu) \mapsto \tilde y_1=h_{1,a}(\epsilon_1,y_1,\mu)$ with $h_{1,a}(0,0,\mu)= 0$, $\frac{\partial h_{1,a}}{\partial y_1}(0,0,\mu)=1$ for any $m\in \mathbb N$ (upon increasing $l$ if necessary). We take $m$ large enough. Applying this transformation to \eqref{M1aRedFull}, leaving $r_1$ untouched, it follows that we can take $L_1$ to be of the form $L_1=r_1\widetilde L_1$. Then within $\epsilon_1=0$, we have a saddle \responsenew{(due to $\gamma_0(\mu)$)} for $\mu\sim 0$ and we can straighten out the unstable manifold by an $r_1$-dependent transformation of $y_1$: $(r_1,y_1)\mapsto y_{1,a}$. This gives $\widetilde L=y_{1,a}\widetilde L_0 +\epsilon_1 \widetilde L_1$:
\begin{equation}\eqlab{eqnsredMa1new}
\begin{aligned}
 \epsilon_1' &=\epsilon_1,\\
 r_1' &=-\frac12 r_1,\\
 y_{1,a}' &=(\lambda(\mu) +r_1 L_0(r_1,y_{1,a},\mu) )y_{1,a}+r_1 \epsilon_1 L_1(\epsilon_1,r_1,y_{1,a},\mu),
\end{aligned}
\end{equation}
upon dropping the tildes. Upon composition, the desired coordinate transformation can be written as in \eqref{y1az1a}. With $k$ fixed, we can, upon taking $m$ and $l$ large enough, ensure that \eqref{y1az1a} is $C^{k+3}$. Applying this transformation to \eqref{barxneg1eqns} gives that $L_0$, $L_1$ and $L_2$, upon expanding, are all $C^k$.
 \end{proof}



Since the flow is regular on sets that are uniformly bounded away from $M_{1,a}$ and $M_{1,r}$, we can extend the transformation leading to \eqref{eqnsredMa1new2} to $z_1 \in [-1-\sigma,0]$, say, by the flow-box theorem. We therefore describe $P_{1,a}$ using the coordinates of \eqref{eqnsredMa1new2} from $\{z_{1,a}=-\widetilde m_{1,a}(\epsilon_1,r_1,y_{1,a},\mu)\}$ to $\{\epsilon_1=\epsilon_{11}\}$. Here $\widetilde m_{1,a}(\epsilon_1,r_1,y_{1,a},\mu) :=m_{1,a}(r_1,y_1(\epsilon_1,r_1,y_{1,a},\mu),\epsilon_1,\mu)$ with the smooth function $y_1(\epsilon_1,r_1,y_{1,a},\mu)$ being obtained (by the implicit function theorem) from \eqref{y1az1a} with $z_1=0$:
\begin{align*}
 y_{1,a}=h_{1,a}^0(\epsilon_1,r_1,y_1,\mu)+\epsilon_1 h_{1,a}^1(\epsilon_1,r_1,y_1,0,\mu).
\end{align*}
For simplicity, we denote the transformed mapping $$P_{1,a}(\epsilon_1,r_1,y_{1,a},\mu)=\begin{pmatrix} 
                                                                                                                                                        \epsilon_{11}\\ \sqrt{\epsilon_1 \epsilon_{11}^{-1}}r_1\\
                                                                                                                                                        *\\
                                                                                                                                                        *
                                                                                                                                                                                                                                                                                                                \end{pmatrix}
,$$ by the same symbol. Notice that the $r_1$- and $\epsilon_1$-components of the mapping are just consequences of the conservation of $\epsilon=r_1^2 \epsilon_1$. 

We describe $P_{1,a}$ in \propref{P1a}, but in preparation we first state a result on the \textit{Shilnikov problem}
associated with the $(\epsilon_1,r_1,y_{1,a})$-subsystem, written in the desingularized form \eqref{eqnsredMa1new}. For this, it will be convinient to work with functions $(u,v)\in U\times  V \mapsto f(u,v)\in \mathbb R^l$, $U\subset \mathbb R^n,V\subset \mathbb R^m$, $l,n,m\in \mathbb N$, that are $C^{k}$-smooth, $k\in \mathbb N$, with respect to the $v\in V$, depending continuously on $u\in U$. We write the function space of such functions by $C(u^0v^k)$ and let
\begin{align*}
 \Vert f\Vert_{u^0v^k}:=\sup_{(u,v)\in U\times V,\,0\le \vert i\vert\le k}\vert D^i f(u,v)\vert,
\end{align*}
with $D^i$ being the partial derivative with respect to $v$ of order $i=(i_1,\ldots,i_m)\in \mathbb N_0^m$,
denote the associated Banach norm. Notice that by $C(u^0v^k)$ and $\Vert \cdot\Vert_{u^0v^k}$ we suppress $l$, $n$, $m$, $U$ and $V$; it should be clear from the context what these are. On the other hand, we use $$\Vert f \Vert_{C^k}:=\sup_{u\in U,\,0\le \vert i\vert\le k} \vert D^i f(u)\vert, $$ to denote the usual $C^k$-norm of a $C^k$-smooth function $f:U\rightarrow \mathbb R^l$. 
\begin{proposition}\proplab{this}
Fix $k\in \mathbb N$, $\alpha\in (0,\frac12)$ and consider \eqref{eqnsredMa1new} with the right hand side being $C^{k+1}$. Then there exist constants $\tau_0>0$, $c_1>0$ and $c_2>0$ such that the following holds for all $\delta>0$ small enough: There exists a $C^k$-smooth function $\underline y_{1,a}(t,\tau,\epsilon_{11},r_{10},y_{11},\mu)$, defined on the set given by: $(t,\tau): 0\le t\le \tau, \tau>\tau_0$ and $0\le \epsilon_{11},r_{10},\vert y_{11}\vert,\vert \mu\vert \le c_1 \delta$, such that $y_{1,a}(t)=\underline y_{1,a}(t,\tau,\epsilon_{11},r_{10},y_{11},\mu)$ solves the \textnormal{Shilnikov problem} defined by
 \begin{align*}
 \epsilon_1(\tau)=\epsilon_{11},\quad r_1(0)=r_{10},,\quad y_{1,a}(\tau)=y_{11}.
 \end{align*}
 Moreover, $\underline y_{1,a}$ has the following expansion:
 \begin{align*}
 \underline y_{1,a}(t,\tau,\epsilon_{11},r_{10},y_{11},\mu) = e^{\lambda(\mu)(t-\tau)} (y_{11}+\phi(t,\tau,\epsilon_{11},r_{10},y_{11},\mu)),
 \end{align*}
where $\Vert \phi\Vert_{C^k}\le c_2$; specifically
\begin{align}
 \Vert \phi\Vert_{(t,\tau,\epsilon_{11},r_{10})^0 (y_{11},\mu)^k} \le \delta,\quad \phi(t,\tau,\epsilon_{11},0,0,\mu)\equiv 0. \eqlab{bphi}
\end{align}
Finally, define 
\begin{align*}
 \phi_{\infty}(t,\epsilon_{11},r_{10},y_{11},\mu):=\left(e^{-\int_{t}^\infty  r_1(s) L_0(r_1(s),0,\mu)ds}-1\right)y_{11} ,
\end{align*}
for all $t\ge 0$ with
$r_1(t) = e^{-\frac12 t} r_{10}$.
Then $\phi_{\infty}\in C^k$ and
\begin{align}
 \Vert \phi(t,\tau,\cdot) - \phi_{\infty}(t,\cdot)\Vert_{(\epsilon_{11},r_{10})^0(y_{11},\mu)^k} \le c_2 \delta e^{-\alpha \tau}, \eqlab{phiconv}
\end{align}
for all $\tau>\tau_0$, $0\le t\le \tau$. 

\end{proposition}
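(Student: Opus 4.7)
The strategy is to reduce the Shilnikov problem to a scalar fixed-point equation for $\phi$, solve it by Banach's theorem, obtain $C^k$-smoothness by the usual induction on derivatives, and finally identify $\phi_\infty$ as the formal $\tau\to\infty$ limit and quantify the convergence rate.

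Since the $(\epsilon_1,r_1)$-subsystem in \eqref{eqnsredMa1new} is decoupled, the conditions $\epsilon_1(\tau)=\epsilon_{11}$ and $r_1(0)=r_{10}$ force $\epsilon_1(t)=\epsilon_{11}e^{t-\tau}$ and $r_1(t)=r_{10}e^{-t/2}$ on $[0,\tau]$. Substituting these together with the ansatz $y_{1,a}(t)=e^{\lambda(\mu)(t-\tau)}(y_{11}+\phi(t))$ and the matching condition $\phi(\tau)=0$ into the $y_{1,a}$-equation reduces the problem to the scalar integral equation
\[
\phi(t)=\mathcal{T}(\phi)(t):=-\int_t^\tau \Bigl[r_1(s)L_0\bigl(r_1(s),e^{\lambda(\mu)(s-\tau)}(y_{11}+\phi(s)),\mu\bigr)(y_{11}+\phi(s))+r_1(s)\epsilon_1(s)e^{-\lambda(\mu)(s-\tau)}L_1(\cdots)\Bigr]\,ds.
\]
Using $\int_0^\tau r_1(s)\,ds\le 2r_{10}\le 2K\delta$ together with $r_1(s)\epsilon_1(s)e^{-\lambda(\mu)(s-\tau)}\le Cr_{10}\epsilon_{11}$ (valid since $\lambda(\mu)\approx\tfrac12$), one obtains $\|\mathcal{T}(\phi)\|_\infty\le C\delta(|y_{11}|+\|\phi\|_\infty+\epsilon_{11})$ and Lipschitz constant $\le C\delta$. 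For $\delta$ small enough, $\mathcal{T}$ is a contraction on $\{\phi\in C([0,\tau]):\|\phi\|_\infty\le K\delta\}$, giving the unique solution. Partial derivatives $\partial^i\phi$, $|i|\le k$, in $(y_{11},\mu)$ satisfy linear integral equations of the same contractive form and are obtained by induction on $|i|$, yielding the sharper bound $\|\phi\|_{(\cdots)^0(y_{11},\mu)^k}\le \delta$. Continuity in $(t,\tau,\epsilon_{11},r_{10})$ and the uniform bound $\|\phi\|_{C^k}\le C$ are produced similarly. The identity $\phi(t,\tau,\epsilon_{11},0,0,\mu)\equiv 0$ holds because $r_{10}=y_{11}=0$ annihilates both forcing terms in $\mathcal{T}$.

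For the last statement, formally letting $\tau\to\infty$ in the integral equation kills the $L_1$-term (since $\epsilon_1(s)=\epsilon_{11}e^{s-\tau}\to 0$) and sends the second argument of $L_0$ to $0$. The limiting linear ODE $w'=r_1(t) L_0(r_1(t),0,\mu)w$ on $[0,\infty)$ with matching condition $w(\infty)=y_{11}$ admits the closed-form solution $w_\infty(t)=y_{11}\exp\bigl(-\int_t^\infty r_1(s) L_0(r_1(s),0,\mu)\,ds\bigr)$, which is exactly $y_{11}+\phi_\infty(t,\ldots)$ as claimed. To establish \eqref{phiconv}, I write the equation satisfied by $\phi-\phi_\infty$ as a linear perturbation of $\mathcal{T}$ and estimate three forcing contributions: (i) the missing tail $\int_\tau^\infty r_1 L_0\,ds$ is $O(r_{10}e^{-\tau/2})$; (ii) the Taylor remainder $L_0(r_1,z,\mu)-L_0(r_1,0,\mu)=O(z)$ with $z=e^{\lambda(\mu)(s-\tau)}(y_{11}+\phi(s))$ yields $O(e^{\lambda(\mu)(s-\tau)})$ pointwise; (iii) the $L_1$-contribution is bounded by $O(\epsilon_{11}e^{-(1-\lambda(\mu))\tau})$. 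For any $\alpha<\tfrac12$, all three are $\le C\delta e^{-\alpha\tau}$ once $\delta$ is small enough that both $\lambda(\mu)$ and $1-\lambda(\mu)$ exceed $\alpha$.

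\textbf{Main obstacle.} The technical crux is preserving the $e^{-\alpha\tau}$ decay through $k$ differentiations in $(y_{11},\mu)$. Each $\mu$-derivative hits $\lambda(\mu)$ and produces a factor $(s-\tau)$ that is polynomially large in $\tau$, so one must track how these polynomial factors combine with the exponentials in each forcing term. The strict inequality $\alpha<\tfrac12$ is exactly what is needed to absorb the polynomial factors into the exponential. A careful induction on $|i|\le k$, solving the resulting hierarchy of linear contractive integral equations for $\partial^i(\phi-\phi_\infty)$ while carrying the exponential weights, is the most delicate part of the argument.
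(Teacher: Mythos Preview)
Your approach is correct and very close in spirit to the paper's, but the paper organizes the argument slightly differently in a way worth noting. Instead of the plain ansatz $y_{1,a}(t)=e^{\lambda(\mu)(t-\tau)}(y_{11}+\phi(t))$, the paper writes
\[
y_{1,a}(t)=e^{\lambda(\mu)(t-\tau)}\,e^{\int_\tau^t r_1(s)L_0(r_1(s),0,\mu)\,ds}\,(y_{11}+u(t)),\qquad u(\tau)=0,
\]
i.e.\ it factors out the linear $L_0(r_1,0,\mu)$ contribution from the start. The fixed-point equation for $u$ is then solved not in the sup norm but in the weighted norm $\Vert u\Vert=\sup_{t\in[0,\tau]}e^{\alpha\tau}|u(t)|$, so that the contraction directly yields $|u(t)|\le\delta e^{-\alpha\tau}$. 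With this choice the convergence estimate \eqref{phiconv} is essentially immediate: $\phi-\phi_\infty$ splits into a tail term $\bigl(e^{\int_\tau^t r_1L_0}-e^{-\int_t^\infty r_1L_0}\bigr)y_{11}=O(r_{10}e^{-\tau/2}y_{11})$ and the remainder $e^{\int_\tau^t r_1L_0}\,u=O(\delta e^{-\alpha\tau})$, with no separate perturbation argument needed.

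Your route --- sup norm first, then a second pass estimating the three forcing contributions in $\phi-\phi_\infty$ --- is equally valid and arguably more transparent, at the cost of doing the work twice. One small point: your pointwise bound $r_1(s)\epsilon_1(s)e^{-\lambda(\mu)(s-\tau)}\le Cr_{10}\epsilon_{11}$ is correct, but integrating it over $[t,\tau]$ naively gives an extra factor of $\tau$; you need to use the sharper bound $\le r_{10}\epsilon_{11}\,\tau e^{(\chi-\frac12)\tau}$ (which is still uniformly bounded for $\chi<\tfrac12$) to close the contraction with a $\tau$-independent constant. Your identification of the ``main obstacle'' --- absorbing the polynomial factors $(s-\tau)^j$ from $\mu$-derivatives into the exponential slack $\tfrac12-\alpha>0$ --- is exactly right and is what both arguments rely on for the $C^k$ statement.
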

\begin{proof}
The proof follows \cite{deng1988a} and we therefore delay the details to \appref{proofprop32}.
\end{proof}


 \begin{proposition}\proplab{P1a}
  Fix $k\in \mathbb N$, any $\delta>0$ and consider \eqref{eqnsredMa1new2} with the right hand side being $C^{k+1}$. Then there exist constants $c>0$, $\chi>0$ and $0<\epsilon_{10}\ll \epsilon_{11}$, $r_{10}>0$ such that the following holds: The transition map $P_{1,a}(\cdot,\mu)$ of \eqref{eqnsredMa1new2} is well-defined on a region $V_{1,a}(\chi)$ defined by
 \begin{align}
    \vert y_{1,a}\vert \le \chi (\epsilon_1 \epsilon_{11}^{-1})^{\lambda(\mu)},\epsilon_1 \in [0,\epsilon_{10}],r_1\in [0,r_{10}],\eqlab{V1a}
 \end{align}
 and on this region, the mapping takes the following form
   \begin{align*}
  P_{1,a}(\epsilon_1,r_1,y_{1,a},\mu) =\begin{pmatrix}
  \epsilon_{11}\\
                                    \sqrt{\epsilon_1 \epsilon_{11}^{-1}}r_1\\
                                    (\epsilon_{1}^{-1} \epsilon_{11})^{\lambda(\mu)} y_{1,a}+\psi_{1,a} (\epsilon_1,r_{1},(\epsilon_{1}^{-1} \epsilon_{11})^{\lambda(\mu)} y_{1,a},\mu)\\
                                    \mathcal O(e^{-c/\epsilon_1})                                    
                                   \end{pmatrix}.
 \end{align*}
 Here $\psi_{1,a}$ is a $C({\epsilon_1}^0(r_1,u,\mu)^k)$-function on the set $\epsilon_1\in [0,\epsilon_{10}],r_1\in [0,r_{10}],0\le \vert u\vert,\vert \mu\vert \le \chi$ with 
 \begin{align}
  \Vert \psi_{1,a}\Vert_{(\epsilon_1,r_1)^0(u,\mu)^k}\le \delta,\quad \psi_{1,a}(\epsilon_1,0,0,\mu)\equiv 0.\eqlab{psi1aprop}
 \end{align}
The $\mathcal O(e^{-c/\epsilon_1})$ remainder term in the $z_{1,a}$-component is also a $C({\epsilon_1}^0(r_1,y_{1,a},\mu)^k)$-function with the order being unchanged upon differentiation up to order $k$ with respect to $(r_1,y_{1,a},\mu)$ for $(\epsilon_1,r_1,y_{1,a})\in V_{1,a}(\chi)$.
 \end{proposition}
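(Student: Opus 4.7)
The plan is to exploit two structural features of \eqref{eqnsredMa1new2}: first, the $(\epsilon_1, r_1, y_{1,a})$-subsystem decouples from $z_{1,a}$; second, the $z_{1,a}$-equation is scalar linear in $z_{1,a}$ with a strongly negative leading coefficient. This lets me analyse the four components of $P_{1,a}$ essentially independently. The first three come from the desingularized form \eqref{eqnsredMa1new} via \propref{this}, and the last by Gronwall.

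For the $(\epsilon_1, r_1)$-components, \eqref{eqnsredMa1new} integrates explicitly as $\epsilon_1(s) = \epsilon_1 e^s$ and $r_1(s) = r_1 e^{-s/2}$ in the desingularized time $s$ with $ds = \epsilon_1 dt$. The transit time to $\{\epsilon_1 = \epsilon_{11}\}$ is thus $\tau = \ln(\epsilon_{11}/\epsilon_1)$, which gives $r_1(\tau) = \sqrt{\epsilon_1/\epsilon_{11}}\,r_1$, as required. For the $y_{1,a}$-component I apply \propref{this} with $r_{10} = r_1$, terminal value $\epsilon_{11}$, and unknown terminal value $y_{11}$. Setting $t = 0$ in the expansion $\underline y_{1,a}(t) = e^{\lambda(\mu)(t-\tau)}(y_{11} + \phi)$ and using $e^{-\lambda(\mu)\tau} = (\epsilon_1 \epsilon_{11}^{-1})^{\lambda(\mu)}$ yields the implicit equation
\begin{equation*}
 y_{11} + \phi(0, \tau, \epsilon_{11}, r_1, y_{11}, \mu) = (\epsilon_1^{-1}\epsilon_{11})^{\lambda(\mu)}\, y_{1,a} =: u.
\end{equation*}
Because the bound \eqref{bphi} on the $(y_{11},\mu)$-derivatives of $\phi$ can be made as small as desired by shrinking $\delta$, the implicit function theorem solves uniquely for $y_{11} = u + \psi_{1,a}(\epsilon_1, r_1, u, \mu)$ on $|u| \le \chi$ with $\chi$ sufficiently small. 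The domain \eqref{V1a} is precisely the preimage $|y_{1,a}| \le \chi(\epsilon_1\epsilon_{11}^{-1})^{\lambda(\mu)}$ of $|u| \le \chi$. The vanishing $\psi_{1,a}(\epsilon_1, 0, 0, \mu) \equiv 0$ follows from $\phi(\cdot, \cdot, \cdot, 0, 0, \mu) \equiv 0$ in \eqref{bphi}, and the bound $\Vert \psi_{1,a}\Vert_{(u,\mu)^k} \le \delta$ is inherited through the implicit function theorem from the same estimate on $\phi$. Smoothness in $(r_1, u, \mu)$ holds for $\epsilon_1 > 0$ directly from $\phi \in C^k$, and the continuous extension to $\epsilon_1 = 0$ (i.e., $\tau \to \infty$) is supplied by the uniform convergence $\phi \to \phi_\infty$ of \eqref{phiconv} together with the smoothness of $\phi_\infty$.

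For the $z_{1,a}$-component, the scalar equation $\dot z_{1,a} = (-2 + L_2) z_{1,a}$ with $L_2(\textbf{0}) = 0$ gives, via Gronwall,
\begin{equation*}
 |z_{1,a}(\tau_{\mathrm{orig}})| \le |z_{1,a}(0)|\exp\!\bigl(-(2 - \mathcal O(\sigma))\tau_{\mathrm{orig}}\bigr),
\end{equation*}
where $\tau_{\mathrm{orig}} = \epsilon_1^{-1} - \epsilon_{11}^{-1}$ is the transition time in original time. For $\sigma$ small this delivers $\mathcal O(e^{-c/\epsilon_1})$ for any fixed $c < 2$. $C^k$-dependence of the variational equations in $(r_1, y_{1,a}, \mu)$, combined with the fact that each $(r_1, y_{1,a}, \mu)$-derivative of $e^{-c/\epsilon_1}$ introduces only polynomial factors that can be absorbed into a slightly smaller $c$, preserves the exponential order under up to $k$ such differentiations.

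In my view, the one genuine obstacle is the uniformity of $\psi_{1,a}$ across the degenerate limit $\epsilon_1 \to 0$, where the Shilnikov transit time diverges and a naive fixed-$\epsilon_1$ IFT estimate would collapse. This is precisely what \propref{this} is designed to circumvent: the $(y_{11}, \mu)^k$-estimate in \eqref{bphi} is uniform in $(\tau, \epsilon_{11}, r_{10})$, and \eqref{phiconv} supplies the $C^k$-limit needed to reach $\epsilon_1 = 0$. Everything else in the proof reduces to careful bookkeeping using the explicit integration of $\epsilon_1$ and $r_1$ and a scalar Gronwall estimate.
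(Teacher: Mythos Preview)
Your proof is correct and follows essentially the same route as the paper: decouple the $(\epsilon_1,r_1,y_{1,a})$-subsystem, invoke \propref{this} with $\tau=\log(\epsilon_{11}/\epsilon_1)$, solve the implicit equation $u=y_{11}+\phi(0,\tau,\ldots)$ for $y_{11}$ via the implicit function theorem using the smallness bound \eqref{bphi}, extend continuously to $\epsilon_1=0$ via \eqref{phiconv}, and treat the $z_{1,a}$-component separately by a Gronwall estimate over the original transition time. The paper is terser on the last point (it just calls the exponential estimate ``standard'' and refers to variational equations), whereas you explicitly compute $\tau_{\mathrm{orig}}=\epsilon_1^{-1}-\epsilon_{11}^{-1}$; this is a difference in exposition only.
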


\begin{proof}
Since $(\epsilon_1,r_1,y_{1,a})$-decouples, we use \propref{this} with $\tau$ defined by $\epsilon_{11} = e^{\tau}\epsilon_{1}$. Let
 \begin{align}\eqlab{y0eqn}
  y_{1,a} =\underline y_{1,a}(0,\tau,\epsilon_{11},r_{1},y_{11},\mu)=e^{-\lambda(\mu) \tau}[y_{11}+\phi(0,\tau,\epsilon_{11},r_{1},y_{11},\mu)].
 \end{align}
 The $y_{1,a}$-component of the transition map is then defined implicitly by this equation as $y_{1,a}\mapsto y_{11}$.
 Consider therefore $F(u_0,\tau,y_{11},\mu)=0$ with
\begin{align*}
F(u_0,\tau,r_1,y_{11},\mu) := u_0-(y_{11}+\phi(0,\tau,\epsilon_{11},r_1,y_{11},\mu)),
\end{align*}
for $\epsilon_{11}>0$ fixed,
defined by setting the square bracket in \eqref{y0eqn} equal to $u_0$. Then by \propref{this} and the implicit function theorem, we obtain a locally unique solution 
\begin{align}\eqlab{y11eqn}
y_{11}=u_0+\widetilde \psi_{1,a} (\tau,r_{1},u_0,\mu),
\end{align}
of $F=0$ with $\widetilde \psi_{1,a}\in C^k$ defined on $\tau>\tau_0$, $0\le \vert u\vert,r_1,\epsilon_{11},\vert \mu\vert \le \chi$ with $\tau_0^{-1}>0$ and $\chi>0$ small enough. With $\epsilon_{11}>0$ fixed in the folllowing we suppress its dependency from $\widetilde \psi_{1,a}$. Inserting $u_0=e^{\lambda(\mu) \tau}y_{1,a}=(\epsilon_{11} \epsilon_1^{-1})^{\lambda(\mu)}y_{1,a}$ into the right hand side of \eqref{y11eqn} and defining 
\begin{align*}
 \psi_{1,a}(\epsilon_1,r_1,u_0,\mu):=\widetilde \psi_{1,a}(\log( \epsilon_1^{-1} \epsilon_{11}),r_{1},u_0,\mu),
\end{align*}
give the desired expression for the $y_{1,a}$-component of $P_{1,a}$ on the set $V_{1,a}(\chi)$. The function $\psi_{1,a}$ extends to a $C({\epsilon_1}^0(r_1,u_0,\mu)^k)$-function on the closed interval $\epsilon_1\in [0,\epsilon_{10}]$ due to \eqref{phiconv} ($\tau\rightarrow \infty$ as $\epsilon_1\rightarrow 0$). The bound in $C((\epsilon_1,r_1)^0(r_1,u_0,\mu)^k)$ in \eqref{psi1aprop} follows from \eqref{bphi}. Finally, the equality in \eqref{psi1aprop} follows from the invariance of $r_1=y_{1,a}=0$ for \eqref{eqnsredMa1new} (due to the existence of \eqref{gamma2}), see also the second equality in \eqref{bphi}.

We subsequently use the transition time $\tau=\log (\epsilon_1^{-1} \epsilon_{11})$ to exponentially estimate the $z_{1,a}$-component of $P_{1,a}$. This is standard and can be done in $C^k$ with respect to $(r_1,y_{1,a},\mu)$ through variational equations. This completes the proof.
\end{proof}

We can do precisely the same thing for $P_{1,r}$ by working in backward time. We therefore state the following results without proof.
\begin{lemma}\lemmalab{transformnearM1r}
 Consider \eqref{barxneg1eqns} near $z_1=1$ and fix any $k\in \mathbb N$. Then for $\sigma>0$ small enough, there exists a regular transformation of time and a $C^{k+3}$-smooth diffeomorphism $(y_1,z_1)\mapsto (y_{1,r},z_{1,r})$, $y_1,z_1-1\in [-\sigma,\sigma]$, depending $C^{k+3}$-smoothly on $(\epsilon_1,r_1,\mu)$ for $0\le r_1,\epsilon_1,\vert \mu\vert \le \sigma$, such that 
 \begin{equation}\eqlab{y1rz1r}
 \begin{aligned}
  y_{1,r} &=h_{1,r}^0(\epsilon_1,r_1,y_1,\mu)+\epsilon_1 h_{1,r}^1(\epsilon_1,r_1,y_1,z_1,\mu),\\
  z_{1,r}&= z_1-m_{1,r}(\epsilon_1,r_1,y_1,\mu),
 \end{aligned}
 \end{equation}
 where
 \begin{align}\eqlab{h1r0prop}
 h_{1,r}^0(\epsilon_1,0,\mu,\mu)\equiv 0, \quad \frac{\partial h_{1,r}^0}{\partial y_1} (\textnormal{\textbf{0}})=1,\quad \frac{\partial h_{1,r}^0}{\partial \mu} (\textnormal{\textbf{0}})=-1,\end{align}  and \eqref{barxneg1eqns} becomes
 \begin{equation}\eqlab{eqnsredMr1new2}
\begin{aligned}
 \dot r_1 &=\frac12 r_1 \epsilon_1,\\
 \dot{y}_{1,r} &=\left((-\lambda(\mu) +r_1 L_0(r_1,y_{1,r},\mu) )y_{1,r}+r_1 \epsilon_1 L_1(\epsilon_1,r_1,y_{1,r},\mu)\right)\epsilon_1,\\
 \dot{z}_{1,r} &= (2+L_2(r_1,y_{1,r},z_{1,r},\epsilon_1,\mu))z_{1,r},\\
 \dot \epsilon_1 &=-\epsilon_1^2,
\end{aligned}
\end{equation}
with $L_0$, $L_1$, $L_2$ each $C^k$-smooth and $L_2(\textnormal{\textbf{0}})=0$.
 
\end{lemma}
We can as above extend the normal form transformation to $z_1 \in [0,1+\sigma]$, say, by the flow-box theorem. 

Let $\widetilde m_{1,r}(\epsilon_1,r_1,y_{1,r},\mu) =m_{1,r}(r_1,y_1(\epsilon_1,r_1,y_{1,r},\mu),\epsilon_1,\mu)$ with the smooth function $y_1(\epsilon_1,r_1,y_{1,r},\mu)$ being obtained (by the implicit function theorem) from \eqref{y1rz1r} with $z_1=0$:
\begin{align*}
 y_{1,r}=h_{1,r}^0(\epsilon_1,r_1,y_1,\mu)+\epsilon_1 h_{1,r}^1(\epsilon_1,r_1,y_1,0,\mu).
\end{align*}
\begin{proposition}\proplab{P1r}
  Fix $k\in \mathbb N$ and any $\delta>0$ and consider \eqref{eqnsredMr1new2} with the right hand side being $C^{k+1}$. Then there exist constants $c>0$, $\chi>0$ and $0<\epsilon_{10}\ll \epsilon_{11}$, $r_{10}>0$ such that the following holds: The transition map $P_{1,r}(\cdot,\mu)$ of \eqref{eqnsredMr1new2} from $\{z_{1,r}=-\widetilde m_{1,r}(\epsilon_1,r_1,y_{1,r},\mu)\}$ to $\{\epsilon_1=\epsilon_{11}\}$ is well-defined on a region $V_{1,r}(\chi)$ defined by
 \begin{align}
    \vert y_{1,r}\vert \le \chi (\epsilon_1 \epsilon_{11}^{-1})^{\lambda(\mu)},\epsilon_1 \in [0,\epsilon_{10}],r_1\in [0,r_{10}],\eqlab{V1r}
 \end{align}
 and on this region, the mapping takes the following form
   \begin{align*}
  P_{1,r}(\epsilon_1,r_1,y_{1,r},\mu) =\begin{pmatrix}
                                    \sqrt{\epsilon_1 \epsilon_{11}^{-1}}r_1\\
                                    (\epsilon_{11} \epsilon_{1}^{-1})^{\lambda(\mu)} y_{1,r}+\psi_{1,r}(\epsilon_1,r_{1},(\epsilon_{1}^{-1} \epsilon_{11})^{\lambda(\mu)} y_{1,r},\mu)\\
                                    \mathcal O(e^{-c/\epsilon_1})\\
                                    \epsilon_{11}
                                   \end{pmatrix}.
 \end{align*}
 Here $\psi_{1,r}$ is a $C({\epsilon_1}^0(r_1,u,\mu)^k)$-function on the set $\epsilon_1\in [0,\epsilon_{10}],r_1\in [0,r_{10}],0\le \vert u\vert,\vert \mu\vert \le \chi$ with 
 \begin{align}
  \Vert \psi_{1,r}\Vert_{(\epsilon_1,r_1)^0(u,\mu)^k}\le \delta,\quad \psi_{1,r}(\epsilon_1,0,0,\mu)\equiv 0.\eqlab{psi1rprop}
 \end{align}
The $\mathcal O(e^{-c/\epsilon_1})$ remainder term in the $z_{1,r}$-component is also a $C((\epsilon_1,r_1)^0(y_{1,r},\mu)^k)$-function with the order being unchanged upon differentiation up to order $k$ with respect to $(y_{1,r},\mu)$ in $V_{1,r}(\chi)$.
 \end{proposition}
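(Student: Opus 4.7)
The plan is to deduce \propref{P1r} from the already-proved \propref{P1a} by a time reversal. Under $t \mapsto -t$, the system \eqref{eqnsredMr1new2} transforms into one of exactly the form \eqref{eqnsredMa1new2}: the $r_1$-equation becomes $\dot r_1 = -\tfrac12 r_1 \epsilon_1$, the $\epsilon_1$-equation becomes $\dot \epsilon_1 = \epsilon_1^2$, the $y_{1,r}$-equation acquires leading eigenvalue $+\lambda(\mu)\epsilon_1$ (the higher-order $L_0,L_1$-terms only change sign), and the $z_{1,r}$-equation becomes contracting with leading coefficient $-2$. The map $P_{1,r}$, which must be interpreted as a backward flow in forward time (since $\dot \epsilon_1 < 0$ while $\epsilon_1 < \epsilon_{11}$ on the entry section), is then precisely a forward flow for the reversed system, and the associated Shilnikov problem is structurally identical to the one handled for $P_{1,a}$.

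Second, I would apply \propref{this} directly to the $(\epsilon_1, r_1, y_{1,r})$-subsystem of the reversed equations, setting $\tau := \log(\epsilon_{11}\epsilon_1^{-1})$ and obtaining a solution
\begin{align*}
\underline y_{1,r}(t, \tau, \epsilon_{11}, r_1, y_{11}, \mu) = e^{\lambda(\mu)(t-\tau)}\bigl(y_{11} + \phi(t, \tau, \epsilon_{11}, r_1, y_{11}, \mu)\bigr)
\end{align*}
satisfying \eqref{bphi} and \eqref{phiconv}. Evaluating at $t=0$ gives $y_{1,r} = (\epsilon_1\epsilon_{11}^{-1})^{\lambda(\mu)}(y_{11} + \phi(0, \tau, \ldots))$, and the implicit function theorem, applied exactly as in the passage from \eqref{y0eqn} to \eqref{y11eqn}, yields $y_{11} = u_0 + \widetilde\psi_{1,r}(\tau, r_1, u_0, \mu)$ with $u_0 := (\epsilon_{11}\epsilon_1^{-1})^{\lambda(\mu)} y_{1,r}$. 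Setting $\psi_{1,r}(\epsilon_1, r_1, u_0, \mu) := \widetilde\psi_{1,r}(\log(\epsilon_1^{-1}\epsilon_{11}), r_1, u_0, \mu)$ then produces the asserted expression for the $y_{1,r}$-component of $P_{1,r}$; the bounds \eqref{psi1rprop} transfer directly from \eqref{bphi}--\eqref{phiconv}, and the identity $\psi_{1,r}(\epsilon_1, 0, 0, \mu) \equiv 0$ records the invariance of the strong canard $\gamma_{1,0,r}(\mu)$, i.e.\ of $\{r_1 = y_{1,r} = 0\}$.

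Finally, the $r_1$- and $\epsilon_1$-components of $P_{1,r}$ are fixed by conservation of $\epsilon = r_1^2 \epsilon_1$, while the $\mathcal O(e^{-c/\epsilon_1})$ estimate on the $z_{1,r}$-component follows from exponential contraction along the (reversed-time) stable foliation of the normal form \eqref{eqnsredMr1new2}, with $C^k$ dependence on $(y_{1,r},\mu)$ obtained from variational equations, in complete parallel with the end of the proof of \propref{P1a}. The only real delicacy here is sign-bookkeeping when relating the reversed-time formulas to the forward-time statement; no new analytical obstacle arises beyond those already resolved by \propref{this} and \propref{P1a}.
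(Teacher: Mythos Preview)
Your proposal is correct and matches the paper's intended approach exactly: the paper states (just before \lemmaref{transformnearM1r}) ``We can do precisely the same thing for $P_{1,r}$ by working in backward time. We therefore state the following results without proof.'' Your time-reversal reduction to the $P_{1,a}$ setting, followed by invoking \propref{this} and the implicit-function argument of \eqref{y0eqn}--\eqref{y11eqn}, is precisely what is meant, and you have correctly identified the only subtlety as sign bookkeeping.
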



\begin{remark}
 Notice that if we define $\hat y_{1,a}$ and $\hat y_{1,r}$ by 
\begin{align}\eqlab{haty1a}
y_{1,i}= (\epsilon_1 \epsilon_{11}^{-1})^{\lambda(\mu)} \hat y_{1,i}, \quad i=a,r
\end{align} 
for $\epsilon_1\in (0,\epsilon_{10}]$ then $V_{1,i}$ and $V_{1,r}$ are ``blown up'' to $\hat y_{1,i}\in [-\chi,\chi]$, $i=a,r$, and the $y_{1,i}$-components of $P_{1,i}$
\begin{align*}
 \hat y_{1,i} +\psi_{1,i} ( r_{1},\epsilon_1,\hat y_{1,i},\mu),
\end{align*}
become regular as functions of $(r_1,\hat y_{1,i},\mu)$ for $i=a,r$. We will use a similar -- but slightly different scaling -- later on. The reason why \eqref{haty1a} will not work directly, is that we also have to transform $\mu$ appropriately in order to relate $y_{1,a}$ and $y_{1,r}$ (through their relationship to $y_1$) in such a way that the application of $P_{1,a}$ and $P_{1,r}$ correspond to flowing the same point forward and backward in time.
\end{remark}

\subsection{Analysis in the $\bar \epsilon=1$-chart}
Having now followed points on $\{z_1=0\}$ forwards and backwards until the section $\{\epsilon_1=\epsilon_{11}\}$, we proceed to extend these further into the scaling chart. Here the equations are given by \eqref{eqn2}. Notice specifically, that $\epsilon_1=\epsilon_{11}>0$ in the $\bar x=-1$-chart corresponds to
$x_2=-\epsilon_{11}^{-1}$ in the scaling chart.

Let $P_{2,a}$ and $P_{2,r}$ denote the mappings from $\{x_2=-\epsilon_{11}^{-1}\}$ to $\{z_2=0\}$ near $\gamma_{2,0}(0)$, obtained by the first intersection upon application of the forward respectively backward flow of \eqref{eqn2}. Each of these mappings are regular, i.e. smooth diffeomorphisms. 

Through the flow of \eqref{eqn2} we can also extend the center manifolds $M_{2,a}$ and $M_{2,r}$, being the coordinate transformations of $M_{1,a}$ and $M_{1,r}$, respectively, using \eqref{cc12}. The manifolds $M_{2,a}$ and $M_{2,r}$ within the $(x_2,y_2,z_2,r_2)$-space are foliated by constant values of $r_2\sim 0$. Let $M_{2,a}(r_2)$ and $M_{2,r}(r_2)$ denote the corresponding leafs of this foliation projecting onto the $(x_2,y_2,z_2)$-space. It is standard, see \cite{szmolyan_canards_2001}, that $M_{2,a}(0)$ and $M_{2,r}(0)$ intersect transversally along $\gamma_{2,0}(\mu)$ for all $\mu\sim 0$. This gives rise to a connecting orbit $\gamma_{2,r_2}(\mu)\subset M_{2,a}(r_2)\cap M_{2,r}(r_2)$ for all $0<r_2 = \sqrt \epsilon\ll 1$, see \cite{szmolyan_canards_2001}. 
In this way, one may obtain the perturbed (maximal) strong canard $\gamma_{\epsilon}(\mu)$ with $\lim_{\epsilon\rightarrow 0}\gamma_{\epsilon}(\mu)\rightarrow \gamma_0(\mu)$, see \cite{szmolyan_canards_2001}.

\subsection{Putting it all together}
We summarize the local findings in charts into a global diagram in \figref{blowup0}. Notice that the strong canard orbit $\gamma_0$ (in orange) is a heteroclinic orbit of partially hyperbolic points on the blowup sphere. For $\mu=0$, there is a fast jump (in black) that together with $\gamma_0$ gives rise to heteroclinic cycle. It is the perturbation of this cycle, that produce the intermediate periodic orbits that connect to the small ones (as perturbations of the green orbits, recall \figref{fig:x2z2}) and canard cycles \responsenew{(recall \lemmaref{lemma0})}.
\begin{figure}[h!]
\begin{center}
{\includegraphics[width=.53\textwidth]{./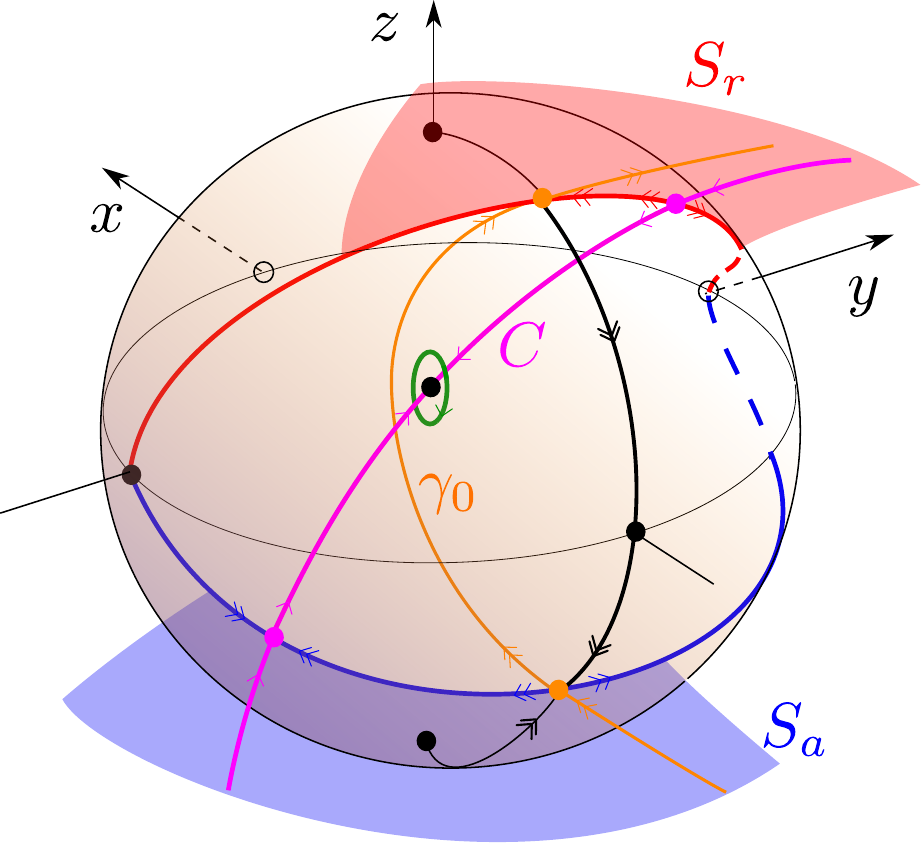}}
\end{center}
\caption{Dynamics for the blown up system for $\mu=0$. The blue and red surfaces are the attracting and repelling critical manifolds. Upon blowup these manifolds extend onto the blowup sphere as normally hyperbolic invariant sets (the red and blue curves, respectively, on the sphere). The weak canard is in purple. In the case of the folded saddle-node for $\mu=0$, it extends onto the blowup sphere as a critical manifold for $\mu=0$. The associated reduced problem has a stable node (indicated by the purple arrows and the black dot inside the sphere), which undergoes a Hopf bifurcation of the layer problem. This Hopf bifurcation is degenerate (the layer problem is Hamiltonian) and there is a family of periodic orbits (one example is shown in green). This family is bounded by the strong canard $\gamma_0$ (in orange). On the blowup sphere, $\gamma_0$ and the fast connection in black from the repelling set (in red) to the attracting set (in blue), produces a heteroclinic cycle. }
\figlab{blowup0}
\end{figure}

We now define $\Delta_{a}(\epsilon_1,r_1,y_{1,a},\mu)$ in the following way:
\begin{align}
 \Delta_a(\epsilon_1,r_1,y_{1,a},\mu) =\left(\Pi \circ P_{2,a} \circ \widehat C_{21} \circ P_{1,a}\right)(\epsilon_1,r_1,y_{1,a},\mu),\eqlab{Deltaa}
\end{align}
with $\Pi$ the projection onto the $(x_2,y_2)$-plane and where $\widehat C_{21}$ is the change of coordinates from $(\epsilon_1,r_1,y_{1,a},z_{1,a})$ to $(x_2,y_2,z_2,r_2)$ defined by \eqref{y1az1a} and \eqref{cc12}. \responsenew{$\widehat C_{12}$ is regular on $\{\epsilon_1=\epsilon_{11}\}$}. $\Delta_a$ therefore describes the transition map from $\{z_1=0\}$ in chart $\bar x=-1$, using the  coordinates $(\epsilon_1,r_1,y_{1,a})$, to $\{z_2=0\}$ in chart $\bar \epsilon=1$, using the coordinates $(x_2,y_2)$ for $r_2=\sqrt{\epsilon}=r_1\sqrt{\epsilon_1}$. 
We define $\Delta_r$ completely analogously:
\begin{align}
 \Delta_r(\epsilon_1,r_1,y_{1,r},\mu) =\left( \Pi \circ P_{2,r} \circ \widehat C_{21} \circ P_{1,r}\right)(\epsilon_1,r_1,y_{1,r},\mu).\eqlab{Deltar}
\end{align}

If $(\epsilon_1,r_1,y_{1,a})\in V_{1,a}(\chi)$ and $(\epsilon_1,r_1,y_{1,r})\in V_{1,r}(\chi)$, under the coordinate transformations \eqref{y1az1a} and \eqref{y1rz1r} for $z_1=0$, correspond to the same point $(\epsilon_1,r_1,y_1,0)$ on the section $\{z_1=0\}$ in the $\bar x=-1$-chart, then $\Delta_a(\epsilon_1,r_1,y_{1,a},\mu)=\Delta_r(\epsilon_1,r_1,y_{1,r},\mu)$ for $r_1\ge 0$, $\epsilon_1>0$ clearly implies existence of a closed orbit of the system(\eqref{fast},$\dot \epsilon=0$) through the point given by $(\epsilon_1,r_1,y_1,0) \in \{z_1=0\}$ in the $\bar x=-1$-chart. 

Solving \eqref{y1az1a} and \eqref{y1rz1r} with $z_1=0$ for $y_1$, we find that
\begin{align*}
 y_1 = \overline h_{1,a}(\epsilon_1,r_1,y_{1,a},\mu),
\end{align*}
and
\begin{align*}
 y_1 =\overline h_{1,r}(\epsilon_1,r_1,y_{1,r},\mu),
\end{align*}
by applying the inverse/implicit function theorem.
Here $\overline h_{1,i}$ have the same smoothness properties as $h_{1,i}$, $\overline h_{1,i}( \epsilon_1,\textnormal{\textbf 0}) \equiv 0,$ $i=a,r$, 
and 
\begin{align}
 \frac{\partial \overline h_{1,a}}{\partial y_{1,a}}(\textbf 0) = 1,&\quad \frac{\partial \overline h_{1,a}}{\partial \mu}(\textbf 0) = -1,\\
 \frac{\partial \overline h_{1,r}}{\partial y_{1,r}}(\textbf 0) = 1,&\quad \frac{\partial \overline h_{1,r}}{\partial \mu}(\textbf 0) = 1.
\end{align}
Equating these expressions for $y_1$, we obtain the following equation
\begin{align}
 K(\epsilon_1,r_1,y_{1,a},y_{1,r},\mu):=\overline h_{1,a}(\epsilon_1,r_1,y_{1,a},\mu)- \overline h_{1,r}(\epsilon_1,r_1,y_{1,r},\mu)=0,\eqlab{Ldef}
\end{align}
relating $y_{1,a}$ and $y_{1,r}$, as desired.
We can use the implicit function theorem to solve this equation for $y_{1,r}$ as function of $(\epsilon_1,r_1,y_{1,a},\mu)$. Subsequently, since we are interested in applying both $P_{1,a}$ and $P_{1,r}$, we need to ensure that the image of the region $V_{1,a}(\chi)$ (or an appropriate subset hereof) under the associated mapping $(\epsilon_1,r_1,y_{1,a})\mapsto (\epsilon_1,r_1,y_{1,r})$ defined by \eqref{Ldef}, will be contained within $V_{1,r}(\chi)$. For this, we therefore first adjust $\mu$.

\begin{lemma}\lemmalab{lemma1}
 There exists a continuous function $\mu_0$ of $(\epsilon_1,r_1)\in [0,\epsilon_{10}]\times [0,r_{10}]$ with $\mu_0(\epsilon_1,0)\equiv 0$ and \responsenew{$\mu_0(0,r_1)\equiv\overline \mu_0(r_1)$, recall \lemmaref{lemma0}}, such that $$K(\epsilon_1,r_1,0,0,\mu_0(\epsilon_1,r_1))=0,$$ for all $\epsilon_1\in [0,\epsilon_{10}]$, $r_1\in [0,r_{10}]$. 
\end{lemma}
\begin{proof}
 Simple application of the implicit function theorem using $K(\epsilon_1,\textbf 0)=0$ and $K'_{\mu}(\textbf 0)=-2$. Moreover, we clearly have $K(0,r_1,0,0,\mu)\equiv K_0(r_1,\mu)$, recall \eqref{L0func}.
\end{proof}
\begin{remark}
 Clearly, $\mu_0\in C^k$ but we will not use such smoothness properties in the proof \thmref{main}. This relates to the fact that the statement of \thmref{main} is only that $\overline \mu(\epsilon,h)$ is continuous. In the author's opinion, the smoothness properties of this function with respect to $h$ is more involved and certainly more difficult to state. 
\end{remark}

Following on from this, we then obtain:
\begin{lemma}\lemmalab{hra}
The equation 
\begin{align*}
  K(\epsilon_1,r_1,y_{1,a},y_{1,r},\mu_0(\epsilon_1,r_1)+\tilde \mu)=0,
\end{align*}
has a locally unique solution of the form
\begin{align}
 y_{1,r} = h_{ra}(\epsilon_1,r_1,y_{1,a},\tilde \mu),\eqlab{hrasol}
\end{align}
with $h_{ra}\in C((\epsilon_1,r_1)^0(y_{1,a},\tilde \mu)^k)$ and
\begin{align}\eqlab{hraprop}
h_{ra}(\epsilon_1,r_1,0,0)\equiv 0, \quad \frac{\partial h_{ra}}{\partial y_{1,a}}(\textnormal{\textbf{0}})=1,\quad \frac{\partial h_{ra}}{\partial \tilde \mu}(\textnormal{\textbf{0}})=-2.
\end{align}
\end{lemma}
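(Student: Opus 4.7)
The plan is to derive this lemma as a direct parametrized implicit function theorem applied to the scalar equation
\begin{equation*}
 \widetilde L(\epsilon_1,r_1,y_{1,a},y_{1,r},\tilde\mu):=L(\epsilon_1,r_1,y_{1,a},y_{1,r},\mu_0(\epsilon_1,r_1)+\tilde\mu)=0,
\end{equation*}
solving for $y_{1,r}$ in a neighborhood of the base point $(\epsilon_1,r_1,y_{1,a},y_{1,r},\tilde\mu)=(0,0,0,0,0)$. First I would verify the base condition: by the defining property of $\mu_0$ in the preceding lemma we have $\widetilde L(\epsilon_1,r_1,0,0,0)=L(\epsilon_1,r_1,0,0,\mu_0(\epsilon_1,r_1))=0$ for all $(\epsilon_1,r_1)$ in the relevant range, so in particular at the origin. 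Next I would check invertibility of the partial in $y_{1,r}$: from \eqref{Ldef} and the given derivatives of $\overline h_{1,r}$,
\begin{equation*}
 \partial_{y_{1,r}}\widetilde L(\textbf 0)=-\partial_{y_{1,r}}\overline h_{1,r}(\textbf 0)=-1\neq 0,
\end{equation*}
so this derivative is bounded away from zero on a sufficiently small neighborhood by continuity.

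Having these two ingredients, the implicit function theorem produces a locally unique solution $y_{1,r}=h_{ra}(\epsilon_1,r_1,y_{1,a},\tilde\mu)$ with the stated domain. The three properties in \eqref{hraprop} then follow by inspection: the first, $h_{ra}(\epsilon_1,r_1,0,0)\equiv 0$, is simply the base-point identity just verified combined with local uniqueness. The other two come from implicit differentiation of $\widetilde L(\epsilon_1,r_1,y_{1,a},h_{ra},\tilde\mu)\equiv 0$, which gives
\begin{equation*}
 \partial_{y_{1,a}}h_{ra}=-\frac{\partial_{y_{1,a}}\widetilde L}{\partial_{y_{1,r}}\widetilde L},\qquad \partial_{\tilde\mu}h_{ra}=-\frac{\partial_{\mu}L}{\partial_{y_{1,r}}\widetilde L};
\end{equation*}
evaluating at the origin with $\partial_{y_{1,a}}\overline h_{1,a}(\textbf 0)=1$, $\partial_{\mu}\overline h_{1,a}(\textbf 0)=-1$, $\partial_{\mu}\overline h_{1,r}(\textbf 0)=1$ (so that $\partial_{\mu}L(\textbf 0)=-2$) and $\partial_{y_{1,r}}\widetilde L(\textbf 0)=-1$ yields the values $1$ and $-2$, respectively.

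The only mildly delicate point is the smoothness class $C((\epsilon_1,r_1)^0(y_{1,a},\tilde\mu)^k)$ asserted for $h_{ra}$. This inherits directly from the regularity of $\overline h_{1,a}$, $\overline h_{1,r}$ (both belonging to this class, via Lemmas \lemmaref{transformnearM1a} and \lemmaref{transformnearM1r} and the inverse function theorem used to produce them) together with the fact that $\mu_0$ is only continuous in $(\epsilon_1,r_1)$. I would therefore apply the implicit function theorem in its parametrized form where $(\epsilon_1,r_1)$ play the role of continuous parameters and $(y_{1,a},\tilde\mu)$ the role of $C^k$ variables; this preserves the mixed regularity of $\widetilde L$ in the solution $h_{ra}$, and the estimates on derivatives up to order $k$ in $(y_{1,a},\tilde\mu)$ are obtained successively by differentiating the implicit equation, exactly as in the standard proof. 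I do not expect any genuine obstacle here; the step that requires the most care is simply making sure that the fixed-point contraction underlying IFT runs uniformly in $(\epsilon_1,r_1)\in[0,\epsilon_{10}]\times[0,r_{10}]$, which follows since $\partial_{y_{1,r}}\widetilde L$ is uniformly close to $-1$ there.
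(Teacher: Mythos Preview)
Your proposal is correct and follows the same approach as the paper, which simply states that the result follows from the implicit function theorem using $L(\epsilon_1,r_1,0,0,\mu_0(\epsilon_1,r_1))=0$ and the nonvanishing of $L'_{y_{1,r}}$ at the relevant point. You have filled in the derivative computations and the regularity discussion that the paper leaves implicit; in particular your value $\partial_{y_{1,r}}\widetilde L(\mathbf 0)=-1$ is the correct one (the paper's ``$\approx 1$'' appears to be a sign slip, inconsequential for the argument).
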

\begin{proof}
 Follows from the implicit function theorem, using $$K(\epsilon_1,r_1,0,0,\mu_0(\epsilon_1,r_2))=0,\quad K'_{y_{1,r}}(\epsilon_1,r_1,0,0,\mu_0(\epsilon_1,r_1))\approx 1.$$
\end{proof}

Due to \eqref{hraprop} and the mean value theorem, $h_{ra}(\epsilon_1,r_1,y_{1,a},\tilde \mu ) = y_{1,a}(\cdots)+\tilde \mu(\cdots)$ 
which leads to the following.
\begin{lemma}\lemmalab{hathra}
Let $\sigma>0$, $0<\chi_i<\chi$ for $i=a,r$.
Define the scaled quantities $\hat y_{1,a},\hat y_{1,a}, \hat \mu$ by
\begin{align}
 y_{1,a} = (\epsilon_1 \epsilon_{11}^{-1})^{\lambda(\mu_0)}\hat y_{1,a},\quad y_{1,r} = (\epsilon_1 \epsilon_{11}^{-1})^{\lambda(\mu_0)}\hat y_{1,r},\quad \mu = \mu_0(\epsilon_1,r_1)+ (\epsilon_1 \epsilon_{11}^{-1})^{\lambda(\mu_0)}\hat \mu,\eqlab{hatexpressions}
\end{align}
for $\epsilon_1>0$. 
Then we have the following.
\begin{enumerate}
\item \label{item1} For $\epsilon_{10}>0$ small enough, with $y_{1,i}$ given by \eqref{hatexpressions} for
\begin{align}\eqlab{hatyarsets}
 \hat y_{1,a}\in [-\chi_a,\chi_a],\quad \hat y_{1,r}\in [-\chi_r,\chi_r],\quad \hat \mu\in [-\sigma,\sigma],
\end{align}
we have $(\epsilon_1,r_1,y_{1,i})\in V_{1,i}(\chi)$, $i=a,r$ for all $\epsilon_1\in (0,\epsilon_{10}]$. 
\item \label{item2} \eqref{hrasol} becomes
\begin{align*}
 \hat y_{1,r} = \hat h_{ra}(\epsilon_1,r_1,\hat y_{1,a},\hat \mu):=(\epsilon_1 \epsilon_{11}^{-1})^{-\lambda(\mu_0)}h_{ra}(\epsilon_1,r_1,(\epsilon_1 \epsilon_{11}^{-1})^{\lambda(\mu_0)}\hat y_{1,a},(\epsilon_1 \epsilon_{11}^{-1})^{\lambda(\mu_0)}\hat \mu),
\end{align*}
for $\epsilon_1\in (0,\epsilon_{10}]$. $\hat h_{ra}$ has a  $C((\epsilon_1,r_1)^0(\hat y_{1,a},\hat \mu)^{k-1})$-extension to the closed interval $\epsilon_1\in [0,\epsilon_{10}]$ with $\hat h_{ra}(\epsilon_1,r_1,0,0)\equiv 0$. 
\item \label{item3}
 $\hat h_{ra}(0,r_1,\cdot,\cdot)$ is linear and for $r_1=0$:
\begin{align*}
\hat h_{ra}(0,0,\hat y_{1,a},\hat \mu)=\hat y_{1,a}-2\hat \mu.
\end{align*}
\item \label{item4} For $\sigma>0$, $r_{10}>0$ and $\epsilon_{10}>0$ all small enough, we have that:
\begin{align*}
 \hat h_{ra}(\epsilon_1,r_1,\hat y_{1,a},\hat \mu)\in (-\chi,\chi),
\end{align*}
for all $\epsilon_1\in [0,\epsilon_{10}],r_1\in [0,r_{10}], \hat y_{1,a}\in [-\chi_a,\chi_a], \hat \mu\in [-\sigma,\sigma]$. 

\end{enumerate}

\end{lemma}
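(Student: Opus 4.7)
The plan is to exploit the Hadamard-type factorization of $h_{ra}$ anticipated in the remark preceding the lemma. Since $h_{ra}(\epsilon_1,r_1,0,0)\equiv 0$, Hadamard's lemma in the $(y_{1,a},\tilde\mu)$-variables gives
\begin{align*}
 h_{ra}(\epsilon_1,r_1,y_{1,a},\tilde\mu) = y_{1,a}\, A(\epsilon_1,r_1,y_{1,a},\tilde\mu) + \tilde\mu\, B(\epsilon_1,r_1,y_{1,a},\tilde\mu),
\end{align*}
with $A, B\in C((\epsilon_1,r_1)^0(y_{1,a},\tilde\mu)^{k-1})$ and, by \eqref{hraprop}, $A(\textnormal{\textbf{0}})=1$, $B(\textnormal{\textbf{0}})=-2$. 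The loss of one derivative here is precisely the source of the $C^{k-1}$ (rather than $C^k$) regularity asserted in item \ref{item2}.

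Substituting the scalings \eqref{hatexpressions} and dividing by $(\epsilon_1\epsilon_{11}^{-1})^{\lambda(\mu_0)}$ then gives, for $\epsilon_1>0$,
\begin{align*}
 \hat h_{ra}(\epsilon_1,r_1,\hat y_{1,a},\hat\mu) = \hat y_{1,a}\, A_s + \hat\mu\, B_s,
\end{align*}
where $A_s, B_s$ denote $A, B$ evaluated at the scaled arguments $\bigl(\epsilon_1,r_1,(\epsilon_1\epsilon_{11}^{-1})^{\lambda(\mu_0)}\hat y_{1,a},(\epsilon_1\epsilon_{11}^{-1})^{\lambda(\mu_0)}\hat\mu\bigr)$. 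Because $\lambda(\mu_0)=\tfrac12(1-\mu_0)$ is uniformly bounded below by a positive constant, the last two arguments tend to zero as $\epsilon_1\to 0^+$, uniformly on compact sets of $(\hat y_{1,a},\hat\mu)$. Differentiating $\hat h_{ra}$ up to $m\le k-1$ times in $(\hat y_{1,a},\hat\mu)$ produces derivatives of $A$ and $B$ of order at most $m$, each multiplied by bounded powers of $(\epsilon_1\epsilon_{11}^{-1})^{\lambda(\mu_0)}\le 1$; this yields the $C((\epsilon_1,r_1)^0(\hat y_{1,a},\hat\mu)^{k-1})$ extension to $\epsilon_1=0$ claimed in item \ref{item2}. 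At $\epsilon_1=0$ the scaled arguments vanish, so $A_s$ and $B_s$ reduce to the $(\hat y_{1,a},\hat\mu)$-independent functions $A(0,r_1,0,0)$ and $B(0,r_1,0,0)$, establishing the linearity in item \ref{item3}; further setting $r_1=0$ specializes these to $A(\textnormal{\textbf{0}})=1$ and $B(\textnormal{\textbf{0}})=-2$, producing $\hat y_{1,a}-2\hat\mu$. Item \ref{item4} then follows by continuity: for $\chi_a,\chi_r,\sigma$ small the limiting value $\hat y_{1,a}-2\hat\mu$ lies in a proper subset of $(-\chi,\chi)$, and uniform continuity of $A_s, B_s$ preserves this bound on a full neighbourhood of $(\epsilon_1,r_1)=(0,0)$.

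The only genuinely delicate point is item \ref{item1}, where the $\lambda(\mu)$-exponent defining $V_{1,i}(\chi)$ must be reconciled with the $\lambda(\mu_0)$-exponent of the scaling \eqref{hatexpressions}. The condition $|y_{1,i}|\le\chi(\epsilon_1\epsilon_{11}^{-1})^{\lambda(\mu)}$ translates into $|\hat y_{1,i}|\le\chi(\epsilon_1\epsilon_{11}^{-1})^{\lambda(\mu)-\lambda(\mu_0)}$, and using $\lambda(\mu)-\lambda(\mu_0)=-\tfrac12(\epsilon_1\epsilon_{11}^{-1})^{\lambda(\mu_0)}\hat\mu$ one finds, with $\ell:=\log(\epsilon_1\epsilon_{11}^{-1})$, that the correction factor equals $\exp\!\bigl(-\tfrac{\hat\mu}{2}\,\ell\,e^{\lambda(\mu_0)\ell}\bigr)$. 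Since $\lambda(\mu_0)$ is bounded below away from zero, $\ell\,e^{\lambda(\mu_0)\ell}\to 0$ as $\ell\to-\infty$, so this factor tends to $1$ uniformly for $\hat\mu\in[-\sigma,\sigma]$. Choosing $\chi_a,\chi_r<\chi$ with a fixed margin and $\epsilon_{10}$ correspondingly small then guarantees $(\epsilon_1,r_1,y_{1,i})\in V_{1,i}(\chi)$, $i=a,r$, closing item \ref{item1}. I expect this exponent reconciliation, elementary as it is, to be the main (albeit mild) obstacle; items \ref{item2}--\ref{item4} are then direct consequences of the Hadamard factorization.
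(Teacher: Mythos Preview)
Your proposal is correct and follows essentially the same approach as the paper's proof. The paper's argument is terser but identical in substance: for item \ref{item1} it performs exactly your exponent reconciliation, writing $(\epsilon_1\epsilon_{11}^{-1})^{\lambda(\mu)-\lambda(\mu_0)}=1+\mathcal O(\epsilon_1^{\lambda(\mu_0)}\log\epsilon_1)$; for items \ref{item2}--\ref{item3} it simply cites \lemmaref{hra} and remarks that ``we lose one degree of smoothness due to the application of the mean value theorem'' (your Hadamard factorization); and for item \ref{item4} it invokes items \ref{item2}--\ref{item3} together with $\chi_a<\chi$ and $\epsilon_{10}$ small. One small wording issue: in your item \ref{item4} you speak of ``$\chi_a,\chi_r,\sigma$ small'', but $\chi_a<\chi$ is a fixed hypothesis providing the margin, and only $\sigma$, $r_{10}$, $\epsilon_{10}$ are taken small; your argument works as written once this is noted.
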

\begin{proof}
 Recall from \eqref{V1a} and \eqref{V1r} that $(\epsilon_1,r_1,y_{1,i})\in V_{1,i}(\chi)$ with $\epsilon_1>0$ if and only if
 \begin{align}
  (\epsilon_1^{-1} \epsilon_{11})^{\lambda(\mu)}\vert y_{1,i}\vert \le \chi.\eqlab{V1aragain}
 \end{align}
Now, by \eqref{lambdamu} and  \eqref{hatexpressions} for $\vert\hat y_{1,i}\vert \le \chi_i$ the left hand side of \eqref{V1aragain} becomes
\begin{align}
(\epsilon_1\epsilon_{11}^{-1})^{(\epsilon_1 \epsilon_{11}^{-1})^{\lambda(\mu_0(\epsilon_1,r_1))} \hat \mu} \vert \hat y_{1,i} \vert \le e^{(\epsilon_1 \epsilon_{11}^{-1})^{\lambda(\mu_0(\epsilon_1,r_1))} \hat \mu \log (\epsilon_1 \epsilon_{11}^{-1})}\chi_{i} = (1+\mathcal O(\epsilon_1^{\lambda(\mu_0)}\log \epsilon_1))\chi_i,\eqlab{estimatepseps}
\end{align}
and the item \ref{item1} therefore follows. Items \ref{item2} and \ref{item3} follow from \lemmaref{hra}; notice we lose one degree of smoothness due to the application of the mean value theorem. Finally, item \ref{item4} follows from \ref{item2} and \ref{item3} using $0<\chi_a<\chi$ and $\epsilon_{10}>0$ small enough.
\end{proof}


The implications of item \ref{item1}.  are the following:
Let 
\begin{equation}\eqlab{widehatP1i}
\begin{aligned}
 \widehat P_{1,i}(\epsilon_1,r_1,\hat y_{1,i},\hat \mu) &:= P_{1,i}(\epsilon_1,r_1,y_{1,i},\mu),
\end{aligned}
\end{equation}
for $i=a,r$ and 
with $y_{1,a}$, $y_{1,r}$ and $\mu$ on the right hand side given by the expressions in \eqref{hatexpressions}. Then we have:
\begin{lemma}\lemmalab{widehatPlemma}
Fix $\delta>0$ small enough. Then for $r_{10}>0$, $\epsilon_{10}>0$ sufficiently small, $$\widehat P_{1,i}\in C((\epsilon_1,r_1)^0(\hat y_{1,i},\hat \mu)^{k}),$$ $i=a,r$, defined for $\hat y_{1,a}\in [-\chi_a,\chi_r],\hat y_{1,r}\in [-\chi_r,\chi_r]$, \responsenew{respectively,} and $\hat \mu\in [-\sigma,\sigma]$,  $\epsilon_1\in [0,\epsilon_{10}]$, $r_1\in [0,r_{10}]$. 
In particular, the $y_{1,i}$ component of $\widehat P_{1,i}$ takes the following form:
\begin{align}
 \hat y_{1,i}+\widehat \psi_{1,i}(\epsilon_1,r_1,\hat y_{1,i},\hat \mu),\eqlab{expansionP1y}
\end{align}
where 
\begin{align}\eqlab{widehatpsi}
\Vert \widehat \psi_{1,i}\Vert_{(\epsilon_1,r_1)^0(\hat y_{1,i},\hat \mu)^k}\le \delta,\quad \widehat \psi_{1,i}(\epsilon_1,0,0,\hat \mu)\equiv 0.\end{align}
\end{lemma}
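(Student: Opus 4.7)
The plan is a direct substitution of the scalings \eqref{hatexpressions} into the expression for the $y_{1,i}$-component of $P_{1,i}$ furnished by \propref{P1a} and \propref{P1r}, followed by careful bookkeeping of the norm. I treat $i=a$; the case $i=r$ is identical by symmetry between \propref{P1a} and \propref{P1r}. The first step is to observe that on the output section $\{\epsilon_1=\epsilon_{11}\}$ the scaling factor $(\epsilon_1\epsilon_{11}^{-1})^{\lambda(\mu_0)}$ equals $1$, so the output $\hat y_{1,a}|_{\textrm{out}}$ coincides with $y_{1,a}|_{\textrm{out}}$. Inserting $y_{1,a}|_{\textrm{in}}=(\epsilon_1\epsilon_{11}^{-1})^{\lambda(\mu_0)}\hat y_{1,a}|_{\textrm{in}}$ and $\mu=\mu_0(\epsilon_1,r_1)+(\epsilon_1\epsilon_{11}^{-1})^{\lambda(\mu_0)}\hat \mu$ into the expression provided by \propref{P1a} gives
\begin{align*}
\hat y_{1,a}|_{\textrm{out}}&=\beta(\epsilon_1,r_1,\hat \mu)\,\hat y_{1,a}|_{\textrm{in}}+\psi_{1,a}\bigl(\epsilon_1,r_1,\beta(\epsilon_1,r_1,\hat \mu)\hat y_{1,a}|_{\textrm{in}},\mu\bigr),
\end{align*}
where, using \eqref{lambdamu} to write $\lambda(\mu_0)-\lambda(\mu)=\tfrac12(\epsilon_1\epsilon_{11}^{-1})^{\lambda(\mu_0)}\hat \mu$,
\begin{align*}
\beta(\epsilon_1,r_1,\hat \mu):=(\epsilon_1\epsilon_{11}^{-1})^{\lambda(\mu_0)-\lambda(\mu)}=\exp\bigl(\tfrac12(\epsilon_1\epsilon_{11}^{-1})^{\lambda(\mu_0)}\hat \mu\log(\epsilon_1\epsilon_{11}^{-1})\bigr).
\end{align*}
By exactly the estimate \eqref{estimatepseps}, $\beta=1+\mathcal O(\epsilon_1^{\lambda(\mu_0)}\log\epsilon_1)$, uniformly in $r_1$ and $\hat \mu\in[-\sigma,\sigma]$.

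Given this calculation, one defines
\begin{align*}
\widehat \psi_{1,a}(\epsilon_1,r_1,\hat y_{1,a},\hat \mu):=(\beta-1)\hat y_{1,a}+\psi_{1,a}\bigl(\epsilon_1,r_1,\beta\hat y_{1,a},\mu_0(\epsilon_1,r_1)+(\epsilon_1\epsilon_{11}^{-1})^{\lambda(\mu_0)}\hat \mu\bigr),
\end{align*}
so that the output takes the form \eqref{expansionP1y}. Item \ref{item1} of \lemmaref{hathra} guarantees that for $\epsilon_{10}$ small enough the preimage of the scaled domain \eqref{hatyarsets} lies inside the region $V_{1,a}(\chi)$ on which $P_{1,a}$ was defined, so the composition is well-posed. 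The normalization $\widehat\psi_{1,a}(\epsilon_1,0,0,\hat \mu)\equiv 0$ follows immediately: the first summand vanishes since $\hat y_{1,a}=0$, and the second since $\psi_{1,a}(\epsilon_1,0,0,\mu)\equiv 0$ by \eqref{psi1aprop}.

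The bound \eqref{widehatpsi} is where the estimates are tracked. The first summand $(\beta-1)\hat y_{1,a}$ and all its $(\hat y_{1,a},\hat \mu)$-derivatives up to order $k$ carry at least one factor of $\epsilon_1^{\lambda(\mu_0)}\log\epsilon_1$ (derivatives in $\hat\mu$ pull down $\tfrac12(\epsilon_1\epsilon_{11}^{-1})^{\lambda(\mu_0)}\log(\epsilon_1\epsilon_{11}^{-1})$), so choosing $\epsilon_{10}$ small forces this contribution below $\delta/2$ uniformly in $(r_1,\hat y_{1,a},\hat \mu)$. For the second summand, the chain rule expresses each derivative up to order $k$ with respect to $(\hat y_{1,a},\hat \mu)$ as a polynomial in $\beta$, $\log(\epsilon_1\epsilon_{11}^{-1})$, $(\epsilon_1\epsilon_{11}^{-1})^{\lambda(\mu_0)}$ and the partial derivatives $D^\alpha_{(u,\mu)}\psi_{1,a}$ with $|\alpha|\le k$. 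Each such derivative of $\psi_{1,a}$ is bounded by the constant $\delta$ supplied by \eqref{psi1aprop}; choosing $\delta$ in \propref{P1a} sufficiently small relative to the constant produced by this polynomial expansion, and then shrinking $\epsilon_{10}$ further to absorb the $\log$-factors, yields the desired $\Vert\widehat\psi_{1,a}\Vert_{(\epsilon_1,r_1)^0(\hat y_{1,a},\hat \mu)^k}\le\delta$. Finally, continuity in $(\epsilon_1,r_1)$ and $C^k$-smoothness in $(\hat y_{1,a},\hat \mu)$ follow from the corresponding regularity of $\psi_{1,a}$ and $\mu_0$ together with the smoothness of $\beta$ in $\hat \mu$.

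The one subtle point, and the main obstacle, is the interplay between the non-smooth $\epsilon_1$-dependence inherited from $\beta$ and from the factor $(\epsilon_1\epsilon_{11}^{-1})^{\lambda(\mu_0)}$ entering $\mu$, and the required $C^k$-smoothness in $(\hat y_{1,a},\hat \mu)$. This is handled cleanly only because these factors are universal scalars with respect to $(\hat y_{1,a},\hat \mu)$: differentiating in the latter variables does not act on the $\epsilon_1^{\lambda(\mu_0)}$-factors themselves, only multiplies by them, and each such factor is uniformly bounded by $1$ on $\epsilon_1\in[0,\epsilon_{10}]$. This is precisely why the norm used in the statement is $C((\epsilon_1,r_1)^0(\hat y_{1,a},\hat \mu)^k)$ rather than a genuine $C^k$-norm in all variables.
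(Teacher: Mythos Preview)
Your proposal is correct and follows essentially the same approach as the paper: insert the scalings \eqref{hatexpressions} into the $y_{1,i}$-component of $P_{1,i}$, use the estimate \eqref{estimatepseps} to control the factor $(\epsilon_1\epsilon_{11}^{-1})^{\lambda(\mu_0)-\lambda(\mu)}$, and read off \eqref{widehatpsi} from \eqref{psi1aprop} and \eqref{psi1rprop}. The paper's own proof is a two-line sketch of exactly this; your version spells out the definition of $\widehat\psi_{1,a}$ and the bookkeeping behind the bound, which is useful detail but not a different argument.
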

\begin{proof}
 We insert \eqref{hatexpressions} into the $y_{1,i}$-component of $P_{1,i}$. Proceeding as in \eqref{estimatepseps} for the expansion in $\epsilon_1$ we obtain \eqref{expansionP1y}. \eqref{widehatpsi} follows from \eqref{psi1aprop} and \eqref{psi1rprop}.
\end{proof}

Moreover, by item \ref{item4}. we have that the composed mapping
\begin{align}
(\epsilon_1,r_1,\hat y_{1,a},\hat \mu) \mapsto \widehat P_{1,r}(\epsilon_1,r_1,\hat h_{ra}(\epsilon_1,r_1,\hat y_{1,a},\hat \mu),\hat \mu),
\end{align}
is a $C((\epsilon_1,r_1)^0(\hat y_{1,a},\hat \mu)^{k-1})$-function
defined for $y_{1,a}\in [-\chi_a,\chi_a]$ and $\hat \mu\in [-\sigma,\sigma]$  and all $r_1\in [0,r_{10}]$, $\epsilon_1\in [0,\epsilon_{10}]$ for $\sigma>0$ and $\epsilon_{10}>0$ small enough.

We now return to the problem of solving $\Delta_a=\Delta_r$ for closed orbits, \responsenew{recall \eqref{Deltaa} and \eqref{Deltar}}. 
Define
 $$\widehat \Delta_i(\epsilon_1,r_1,\hat y_{1,i},\hat \mu)=\Delta_i(\epsilon_1,r_1,y_{1,i},\mu),\quad i=a,r$$ with $y_{1,i}$ and $\mu$ on the right hand side given by the expressions \eqref{hatexpressions}.
By the properties of $\widehat P_{1,i}$ and the regularity of $C_{21}$ and $P_{2,i}$, we have
$\widehat \Delta_{i}\in C((\epsilon_1,r_1)^0(\hat y_{1,i},\hat \mu)^{k})$ for $i=a,r$. 
Finally, let 
\begin{align}\eqlab{widehatDelta}
 \widehat \Delta(\epsilon_1,r_1,\hat y_{1,a},\hat \mu ) &:=\widehat  \Delta_r\left(\epsilon_1,r_1,\hat h_{ra}(\epsilon_1,r_1,\hat y_{1,a},\hat \mu),\hat \mu\right)-\widehat \Delta_a\left(\epsilon_1,r_1,\hat y_{1,a},\hat \mu\right),
\end{align}
for $ \epsilon_1\in [0,\epsilon_{10}],r_1\in [0,r_{10}], \hat y_{1,a}\in [-\chi_a,\chi_a], \hat \mu\in [-\sigma,\sigma]$. 
By \lemmaref{hathra} item \ref{item2}, we finally have
that $\widehat \Delta \in C((\epsilon_1,r_1)^0(\hat y_{1,a},\hat \mu)^{k-1})$.

Clearly, 
\begin{align}\eqlab{DeltaEq}
\widehat \Delta(\epsilon_1,r_1,\hat y_{1,a},\hat \mu)=\textnormal{\textbf{0}},
\end{align}defines closed orbits for $\epsilon_1>0$, $r_1\ge 0$ sufficiently small. 
We therefore proceed to solve this equation. We will do so by solving for $\hat y_{1,a}$, $\hat \mu$ as functions of $\epsilon_1$ and $r_1$ using the implicit function theorem. Henceforth we therefore take $k\ge 2$. 
\begin{lemma}\lemmalab{Delta0prop}
\begin{align}\eqlab{WidehatDelta0}
  \widehat \Delta(\textnormal{\textbf 0})=\textnormal{\textbf 0}.
 \end{align}
Moreover, there exists two nonzero tangent vectors $$v_i=(*,*,0)\in T_{\gamma_{2,0}(0)\cap \{z_2=0\}} M_{2,i}(0),$$ $i=a,r$, such that: 

\begin{align}
 \frac{\partial \widehat \Delta}{\partial \hat y_{1,a}} (\textnormal{\textbf 0}) &=\Pi v_r-\Pi v_a,\nonumber\\
 \frac{\partial \widehat \Delta}{\partial \hat \mu} (\textnormal{\textbf 0}) &=-2\Pi v_r.\eqlab{hatDeltaHatMu}
\end{align}
\end{lemma}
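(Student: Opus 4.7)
My plan is to prove both assertions by chasing through the definition \eqref{widehatDelta} at the base point $\textnormal{\textbf 0}$ and applying the chain rule, exploiting the singular structure at $\mu=r_2=0$. The key geometric input is the transverse intersection $M_{2,a}(0)\cap M_{2,r}(0)=\gamma_{2,0}(0)$ together with the invariance of $\gamma_{2,0}(0)$ under the layer problem \eqref{layer2}.

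For \eqref{WidehatDelta0}: at $\textnormal{\textbf 0}$, \lemmaref{hathra} gives $\hat h_{ra}(\textnormal{\textbf 0})=0$, so both $\widehat\Delta_a$ and $\widehat\Delta_r$ are evaluated with $\hat y_{1,\cdot}=0,\hat\mu=0$, i.e.\ with $\mu=\mu_0(0,0)=0$ and $y_{1,\cdot}=0$; by \eqref{h1a0prop} and \eqref{h1r0prop} this corresponds to the common point $y_1=0$ on $\{z_1=0\}$ in the $\bar x=-1$-chart. In the singular limit $\epsilon_1\to 0$, $r_1=0$, the maps $\widehat P_{1,a}$ and $\widehat P_{1,r}$ (extended continuously to $\epsilon_1=0$ by \propref{P1a} and \propref{P1r}) land, via the chart change \eqref{cc12}, on the asymptotic branches $\gamma_{1,0,i}(0)$ of $\gamma_{2,0}(0)$ extending into the $\bar x=-1$-chart. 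Since $\gamma_{2,0}(0)$ is invariant under the layer flow, $P_{2,a}$ and $P_{2,r}$ both track $\gamma_{2,0}(0)$ and terminate at the singleton $\gamma_{2,0}(0)\cap\{z_2=0\}=(1/2,0,0)$; projecting with $\Pi$ yields $\widehat\Delta_a(\textnormal{\textbf 0})=\widehat\Delta_r(\textnormal{\textbf 0})$.

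For the partial derivatives, the chain rule applied to \eqref{widehatDelta} gives
\begin{align*}
\partial_{\hat y_{1,a}}\widehat\Delta(\textnormal{\textbf 0})&=\partial_{\hat y_{1,r}}\widehat\Delta_r(\textnormal{\textbf 0})\,\partial_{\hat y_{1,a}}\hat h_{ra}(\textnormal{\textbf 0})-\partial_{\hat y_{1,a}}\widehat\Delta_a(\textnormal{\textbf 0}),\\
\partial_{\hat\mu}\widehat\Delta(\textnormal{\textbf 0})&=\partial_{\hat y_{1,r}}\widehat\Delta_r(\textnormal{\textbf 0})\,\partial_{\hat\mu}\hat h_{ra}(\textnormal{\textbf 0})+\partial_{\hat\mu}\widehat\Delta_r(\textnormal{\textbf 0})-\partial_{\hat\mu}\widehat\Delta_a(\textnormal{\textbf 0}),
\end{align*}
with $\partial_{\hat y_{1,a}}\hat h_{ra}(\textnormal{\textbf 0})=1$ and $\partial_{\hat\mu}\hat h_{ra}(\textnormal{\textbf 0})=-2$ from \lemmaref{hathra}. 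The direct contributions $\partial_{\hat\mu}\widehat\Delta_i(\textnormal{\textbf 0})$ both vanish: $\hat\mu$ enters $\widehat\Delta_i$ only through $\mu=\mu_0(\epsilon_1,r_1)+(\epsilon_1\epsilon_{11}^{-1})^{\lambda(\mu_0)}\hat\mu$, and the derivative $(\epsilon_1\epsilon_{11}^{-1})^{\lambda(\mu_0)}$ vanishes at $\epsilon_1=0$ since $\lambda(0)=1/2>0$. The remaining derivatives $\partial_{\hat y_{1,i}}\widehat\Delta_i(\textnormal{\textbf 0})$ are computed by pushing $\partial/\partial\hat y_{1,i}$ through $\widehat P_{1,i}$ (whose $y_{1,i}$-component is $\hat y_{1,i}+\widehat\psi_{1,i}$ with $\widehat\psi_{1,i}(\epsilon_1,0,0,\hat\mu)\equiv 0$ by \lemmaref{widehatPlemma}, and whose $z_{1,i}$-component is $\mathcal O(e^{-c/\epsilon_1})\to 0$), then through the smooth maps $\widehat C_{21}$ and $P_{2,i}$. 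The image is a nonzero tangent vector $v_i\in T_{p_i}M_{2,i}(0)$ with $p_i=\gamma_{2,0}(0)\cap\{z_2=0\}$ (by flow-invariance of $M_{2,i}(0)$) and zero $z_2$-component (since $P_{2,i}$ maps into $\{z_2=0\}$); thus $\partial_{\hat y_{1,i}}\widehat\Delta_i(\textnormal{\textbf 0})=\Pi v_i$, and \eqref{hatDeltaHatMu} follows.

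The main obstacle is the careful handling of the $\epsilon_1\to 0$ limit: the maps $\widehat P_{1,i}$ are a priori defined only for $\epsilon_1>0$, and one must invoke the continuous extensions supplied by \propref{P1a}, \propref{P1r}, and \lemmaref{widehatPlemma} to evaluate and differentiate at $\epsilon_1=0$. Matching the resulting limit points on $\{\epsilon_1=\epsilon_{11}\}$ with the asymptotic branches of $\gamma_{2,0}(0)$ via \eqref{cc12} is what pins down $p_i=(1/2,0,0)$. Finally, the vectors $v_a,v_r$ are linearly independent -- so that $\Pi v_r-\Pi v_a$ and $-2\Pi v_r$ are both nonzero -- because of the transversality $M_{2,a}(0)\pitchfork M_{2,r}(0)$ along $\gamma_{2,0}(0)$ combined with the transversality of $\gamma_{2,0}(0)$ to $\{z_2=0\}$ at $(1/2,0,0)$.
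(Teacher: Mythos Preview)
Your proof is correct and follows essentially the same route as the paper's: establish $\widehat\Delta(\textnormal{\textbf 0})=\textnormal{\textbf 0}$ from the invariance of $\gamma_{2,0}(0)$, then compute the partial derivatives by chain rule through $\widehat P_{1,i}$, $\widehat C_{21}$, $P_{2,i}$, using \lemmaref{hathra} (item 3) for the $\hat h_{ra}$ derivatives and identifying the image vectors as tangent to $M_{2,i}(0)$ at $\gamma_{2,0}(0)\cap\{z_2=0\}$.

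The only point worth flagging is your argument for $\partial_{\hat\mu}\widehat\Delta_i(\textnormal{\textbf 0})=0$. You write that $\hat\mu$ enters $\widehat\Delta_i$ only through $\mu$, with $\partial\mu/\partial\hat\mu=(\epsilon_1\epsilon_{11}^{-1})^{\lambda(\mu_0)}\to 0$. For this to yield the conclusion you must also know that $\partial_\mu\Delta_i$ stays bounded at $(\epsilon_1,0,0,\mu)$ as $\epsilon_1\to 0$; this is not automatic since $P_{1,i}$ carries the singular factor $(\epsilon_1^{-1}\epsilon_{11})^{\lambda(\mu)}y_{1,i}$. It does hold, because at $y_{1,i}=0$, $r_1=0$ one has $\psi_{1,i}(\epsilon_1,0,0,\mu)\equiv 0$ and the singular term vanishes identically, so $\partial_\mu P_{1,i}$ reduces to the exponentially small $z_{1,i}$-contribution plus the smooth $\mu$-dependence of $\widehat C_{21}$ and $P_{2,i}$. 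The paper sidesteps this by arguing directly at the level of $\widehat P_{1,i}$ via \eqref{widehatpsi}, obtaining $\partial_{\hat\mu}\widehat P_{1,i}(\textnormal{\textbf 0})=\textnormal{\textbf 0}$; the remaining $\mu$-dependence of $\widehat C_{21}$ and $P_{2,i}$ is then killed by the same vanishing factor you invoke. The two arguments are complementary halves of the same chain-rule computation.
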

\begin{proof}
First, \eqref{WidehatDelta0} follows from the connection $\gamma_{2,0}(0)$. Next, for the proof of the the partial derivatives, we first focus on the partial derivatives of $\widehat \Delta_a$.  For this we differentiate $\widehat P_{1,a}$. Following \lemmaref{widehatPlemma}, see \eqref{widehatpsi}, we have $\frac{\partial \widehat P_{1,a}}{\partial \hat \mu}(\textbf 0)=\textbf 0$, and hence $\frac{\partial \widehat \Delta_a}{\partial \hat \mu}(\textbf 0)=\textbf 0$. 
Next, by \eqref{expansionP1y}
\begin{align}
 \frac{\partial \widehat P_{1,a}}{\partial \hat y_{1,a}}(\textbf 0) = \begin{pmatrix}
                                                                       0\\
                                                                       0\\
                                                                       1+\mathcal O(\delta)\\
                                                                       0
                                                                      \end{pmatrix}.\eqlab{partialwidehatPy1}
\end{align}
Since $\widehat P_{1,a}(\textbf 0)=(\epsilon_{11},0,0,0)\in M_{1,a}$, the vector \eqref{partialwidehatPy1} gives a tangent vector to $M_{2,a}(0)$ at $x_2=-\epsilon_{11}^{-1}$ upon application of the tangent map $T_{(\epsilon_{11},0,0,0)}\widehat C_{21}$ of the change of coordinates $\widehat C_{21}$. Consequently, by applying $T P_{2,a}$ we have
\begin{align*}
 \frac{\partial \widehat \Delta_a}{\partial \hat y_{1,a}}(\textbf 0)=\Pi v_a,
\end{align*}
for some nonzero tangent vector $v_a\in T_{\gamma_{2,0}(0)\cap \{z_2=0\}} M_{2,a}(0)$.

$\widehat \Delta_r$ can be handled similarly: 
\begin{align*}
 \frac{\partial \widehat \Delta_r}{\partial \hat y_{1,r}}(\textbf 0)=\Pi v_r,\quad \frac{\partial \widehat \Delta_r}{\partial \hat \mu}(\textbf 0)=\textbf 0
\end{align*}
for some nonzero tangent vector $v_r\in T_{\gamma_{2,0}(0)\cap \{z_2=0\}} M_{2,r}(0)$.
The partial derivative \eqref{hatDeltaHatMu} with respect to $\hat \mu$ then follows from \lemmaref{hathra}, see item \ref{item3}, and the chain rule.

\end{proof}
Since the intersection of $M_{a,2}(0)$ and $M_{r,2}(0)$ is transverse along $\gamma_2$, the vectors $v_i$, $i=a,r$, as well as $\Pi v_i$, $i=a,r$, are linearly independent and consequently, 
\begin{align*}
 \text{det}\left(\frac{\partial \widehat \Delta}{\partial (\hat y_{1,a},\hat \mu)}(\textnormal{\textbf 0})\right) =\text{det}\begin{pmatrix} \Pi v_r-\Pi v_a & -2\Pi v_r \end{pmatrix} = 2\text{det}\begin{pmatrix} \Pi v_a&\Pi v_r \end{pmatrix}\ne 0.
\end{align*}
 In this way, by the implicit function theorem we can solve $\widehat \Delta(\epsilon_1,r_1,\hat y_{1,a},\hat \mu)=0$ locally for $(\hat y_{1,a},\hat \mu)$ as continuous functions of $(\epsilon_1,r_1)$. In this way, we conclude the following.
\begin{proposition}\proplab{prop:large}
There exist continuous functions $\bar y_1(\epsilon_1,r_1)$, $\bar \mu_1(\epsilon_1,r_1)$, $\epsilon_1\in [0,\epsilon_{10}],r_1\in [0,r_{10}]$ for $\epsilon_{10}>0$ and $r_{10}>0$ small enough, with $\bar y_1(0,0)=\bar \mu_1(0,0)=0$ such that there is a periodic orbit of (\eqref{fast},$\dot \epsilon=0$) through any point $(\epsilon_1,r_1, \bar y_1(\epsilon_1,r_1),0)$ in chart $\bar x=-1$ for $\mu=\bar \mu_1(\epsilon_1,r_1)$ for all $\epsilon_1\in (0,\epsilon_{10}],r_1\in [0,r_{10}]$.
\end{proposition}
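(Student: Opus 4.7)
The proof is essentially an application of the implicit function theorem to the equation $\widehat \Delta(\epsilon_1, r_1, \hat y_{1,a}, \hat \mu) = 0$ defined in \eqref{widehatDelta}, using the two facts established in \lemmaref{Delta0prop}: the identity $\widehat \Delta(\textnormal{\textbf 0}) = \textnormal{\textbf 0}$ and the nondegeneracy of the Jacobian with respect to $(\hat y_{1,a}, \hat \mu)$. The plan is to first solve for $(\hat y_{1,a}, \hat \mu)$ in the blown-up coordinates, and then unwind the scalings \eqref{hatexpressions} to obtain $\bar y$ and $\bar \mu$.

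First, I would invoke \lemmaref{Delta0prop}, which gives
\begin{equation*}
\det\left(\frac{\partial \widehat \Delta}{\partial (\hat y_{1,a}, \hat \mu)}(\textnormal{\textbf 0})\right) = 2\det\begin{pmatrix}\Pi v_a & \Pi v_r\end{pmatrix} \ne 0,
\end{equation*}
where the nonvanishing follows from the transversality of $M_{2,a}(0)$ and $M_{2,r}(0)$ along the strong canard $\gamma_{2,0}(0)$ (the core geometric input, established in \cite{szmolyan_canards_2001}). Since $\widehat \Delta \in C((\epsilon_1,r_1)^0 (\hat y_{1,a}, \hat \mu)^{k-1})$ with $k \ge 2$, we can apply a parametric version of the implicit function theorem that only requires continuity in the parameters $(\epsilon_1, r_1)$ and $C^1$-smoothness in the variables $(\hat y_{1,a}, \hat \mu)$ we are solving for. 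This produces continuous functions $\hat y_{1,a}(\epsilon_1, r_1)$ and $\hat \mu(\epsilon_1, r_1)$, defined on $[0,\epsilon_{10}] \times [0,r_{10}]$ for $\epsilon_{10}, r_{10}$ sufficiently small, with $\hat y_{1,a}(0,0) = \hat \mu(0,0) = 0$, solving $\widehat \Delta = \textnormal{\textbf 0}$.

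Next, I would unwind the scalings. Using \eqref{hatexpressions}, define
\begin{equation*}
y_{1,a}(\epsilon_1,r_1) := (\epsilon_1 \epsilon_{11}^{-1})^{\lambda(\mu_0(\epsilon_1,r_1))} \hat y_{1,a}(\epsilon_1,r_1), \quad \bar \mu(\epsilon_1,r_1) := \mu_0(\epsilon_1,r_1) + (\epsilon_1 \epsilon_{11}^{-1})^{\lambda(\mu_0(\epsilon_1,r_1))} \hat \mu(\epsilon_1,r_1).
\end{equation*}
Then $\bar y(\epsilon_1,r_1)$ is obtained by feeding $y_{1,a}(\epsilon_1,r_1)$ and $\bar \mu(\epsilon_1,r_1)$ into $\overline h_{1,a}$, namely $\bar y(\epsilon_1,r_1) := \overline h_{1,a}(\epsilon_1,r_1,y_{1,a}(\epsilon_1,r_1),\bar \mu(\epsilon_1,r_1))$. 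Continuity of $\bar y, \bar \mu$ follows from continuity of all the constituent maps and the fact that $\lambda(\mu_0(0,0)) = 1/2 > 0$, so the scaling factor $(\epsilon_1/\epsilon_{11})^{\lambda(\mu_0)}$ tends to $0$ as $\epsilon_1 \to 0^+$; combined with boundedness of $\hat y_{1,a}, \hat \mu$ this yields $\bar y(0,0) = \bar \mu(0,0) = 0$ (using $\mu_0(\epsilon_1,0) \equiv 0$ and $\overline h_{1,a}(\epsilon_1, \textnormal{\textbf 0}) \equiv 0$).

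Finally, I would verify that a solution of $\widehat \Delta = \textnormal{\textbf 0}$ genuinely produces a closed orbit. This was already argued right before \eqref{DeltaEq}: if, for $(\epsilon_1,r_1,\hat y_{1,a},\hat \mu)$ satisfying $\widehat \Delta = \textnormal{\textbf 0}$, we set $\hat y_{1,r} = \hat h_{ra}(\epsilon_1,r_1,\hat y_{1,a},\hat \mu)$ and unwind \eqref{hatexpressions}, the resulting $(\epsilon_1, r_1, y_{1,a}, 0)$ and $(\epsilon_1, r_1, y_{1,r}, 0)$ correspond to the \emph{same} point $(\epsilon_1, r_1, y_1, 0)$ on $\{z_1=0\}$ by \lemmaref{hra}, so the forward image under $\Pi \circ P_{2,a} \circ \widehat C_{21} \circ P_{1,a}$ coincides with the backward image under $\Pi \circ P_{2,r} \circ \widehat C_{21} \circ P_{1,r}$; this yields a closed orbit of (\eqref{fast},$\dot \epsilon = 0$) for $\mu = \bar \mu(\epsilon_1,r_1)$ through $(\epsilon_1,r_1,\bar y(\epsilon_1,r_1),0)$. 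There is no serious obstacle here — the essential difficulty was packaged into the preceding analysis (the Shilnikov estimates of \propref{this}, the scaling machinery of \lemmaref{hathra}, and the transversality computation of \lemmaref{Delta0prop}); at this stage the proof is a bookkeeping application of the implicit function theorem, and the mild subtlety of working with only $C^0$ regularity in $(\epsilon_1, r_1)$ is precisely why the conclusion is only continuity of $\bar y, \bar \mu$ rather than smoothness.
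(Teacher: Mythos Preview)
Your proposal is correct and follows essentially the same approach as the paper: apply the implicit function theorem to $\widehat\Delta=0$ using the nondegeneracy from \lemmaref{Delta0prop} (transversality of $M_{2,a}(0)$ and $M_{2,r}(0)$), then map the resulting $(\hat y_{1,a},\hat\mu)$ back to the $(y_1,\mu)$-variables. In fact your write-up is more explicit than the paper's, which simply says ``we complete the result by mapping the solution back to the $(y_1,\mu)$-variables''; your unwinding via \eqref{hatexpressions} and $\overline h_{1,a}$, and your remark on why only continuity in $(\epsilon_1,r_1)$ is obtained, are exactly what is being abbreviated there.
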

\begin{proof}
After having solved $\widehat \Delta=0$ for $\hat y_{1,a}$ and $\hat \mu$ as continuous functions $(\epsilon_1,r_1)$ -- using the implicit function theorem and \lemmaref{Delta0prop} -- we complete the result by mapping the solution back to the $(y_1,\mu)$-variables. 
\end{proof}
\section{Completing the proof of \thmref{main}}\seclab{completing}
To complete the proof of \thmref{main}, we first collect our results thus far: First, by \propref{smallcycles} we have for any $h_0>0$ a family of periodic orbits $\Gamma^{\textnormal{small}}_{2,\epsilon,h}$, $h\in (0,h_0]$, of \eqref{eqn2} with $\mu=r_2 \overline \mu_2(\epsilon,h)$ for all $r_2=\sqrt{\epsilon}>0$ small enough, parametrized in the $\tilde u$-variables, see \eqref{tildeust}, by their intersection $\tilde u=(-h,0)$, $y_2= \overline y_2(h,r_2)$ with $\tilde u_2=0$.

\begin{lemma}\lemmalab{overlap}
Consider \eqref{eqn2} for parameter values $\mu \in [-\delta,\delta]$, and let $D$ be any compact domain in the phase $(x_2,z_2)$-plane. Then there is an $h_0>0$ such that any periodic orbit of this system for $r_2>0$ small enough, that is contained within the set defined by $(x_2,z_2)\in D$, $y_2\in [-\delta,\delta]$ and intersects $\tilde u_1<0$, $\tilde u_2=0$ in a single point, belongs to the family $\Gamma^{\textnormal{small}}_{2,\epsilon,h}$, $h\in (0,h_0]$. 
\end{lemma}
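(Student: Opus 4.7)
The plan is to exploit the local uniqueness of the implicit function theorem behind \propref{smallcycles}. Any periodic orbit $\Gamma$ of \eqref{eqn2} satisfying the hypotheses intersects the section $\{\tilde u_1<0,\,\tilde u_2=0\}$ at a single point $(-h^*,0,y_2^*)$, and its periodicity is exactly the condition $\widehat\Delta(h^*,y_2^*,r_2,\mu_2^*)=0$, where $\mu_2^*:=\mu/r_2$. Since \propref{smallcycles} applies the IFT to $\widehat\Delta$ on $[0,h_1]$ to produce a unique branch $(y_2,\mu_2)=(\overline y_2(h,r_2),\overline\mu_2(h,r_2))$ of zeros in a fixed, $h$-uniform neighborhood $V$ of $(0,0)$ for all sufficiently small $r_2$, it is enough to (i) choose $h_1$ so that automatically $h^*\in(0,h_1]$, and (ii) verify that $(y_2^*,\mu_2^*)\in V$ for every such $\Gamma$. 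Step (i) is immediate from compactness of $D$ together with uniform boundedness of the diffeomorphism \eqref{tildeust} between $(x_2,z_2)$ and $\tilde u$ on the admissible parameter range: this yields $h^*\le h_0=h_0(D,\delta)$, and one sets $h_1:=h_0+1$.

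The real work lies in (ii). The bound $|y_2^*|\le\delta$ comes from the hypothesis, while $\mu_2^*$ must be estimated by averaging along $\Gamma$. Integrating
\[
\dot y_2 = \tfrac12\mu + r_2(a_1 y_2 + a_2 z_2) + r_2^2\overline G_2
\]
over one period $T$ and using $y_2(T)=y_2(0)$ gives
\[
\mu = -2 r_2\,(a_1 \bar y_2 + a_2 \bar z_2) + O(r_2^2),
\]
with overbars denoting time-averages along $\Gamma$. In particular $\mu=O(r_2)$ and $\dot y_2=O(r_2)$ along $\Gamma$, so (as $T=O(1)$) the slow coordinate $y_2$ varies by $O(r_2)$ and $\bar y_2 = y_2^*+O(r_2)$. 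A parallel averaging of $\dot x_2=y_2-(\mu+1)z_2+r_2 F_2$ yields $\bar y_2 = (1+\mu)\bar z_2 + O(r_2)$, so $\bar z_2 = y_2^*+O(r_2)$ as well. Substituting back,
\[
\mu_2^* = -2\lambda y_2^* + O(r_2),
\]
with $\lambda = a_1+a_2$. Therefore $|\mu_2^*|\le 2|\lambda|\delta + O(r_2)$; for $\delta$ chosen small enough at the outset (consistent with, and if necessary refining, the smallness already required in \propref{slowmanifold}), this places $(y_2^*,\mu_2^*)\in V$ for all sufficiently small $r_2$. Local uniqueness from the IFT then forces $(y_2^*,\mu_2^*) = (\overline y_2(h^*,r_2),\overline \mu_2(h^*,r_2))$, so $\Gamma=\Gamma^{\textnormal{small}}_{2,h^*,\epsilon}$.

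I expect the technical crux to be the sharp identity $\mu_2^*=-2\lambda y_2^* + O(r_2)$. Controlling $\bar y_2$ is straightforward because $y_2$ is slow, but bounding $\bar z_2$ in terms of $y_2^*$ rather than merely by $\operatorname{diam}(D)$ crucially requires the second averaged identity from the $\dot x_2$-equation. Without this refinement, $|\mu_2^*|$ would only be $O(1)$ depending on $D$, which need not lie in the fixed IFT neighborhood, and the argument would fail for large $D$. Once $\mu_2^*=O(\delta)$ is secured, the rest is the routine application of local uniqueness of zeros of $\widehat\Delta$ provided by \propref{smallcycles}.
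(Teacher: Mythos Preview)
Your approach is valid and genuinely different from the paper's. You argue via averaging: integrating $\dot y_2$ and $\dot x_2$ over one period yields $\mu_2^* = -2\lambda\,\bar y_2 + O(r_2)$, hence $|\mu_2^*| = O(\delta)$, which places $(y_2^*,\mu_2^*)$ inside the IFT uniqueness neighborhood from \propref{smallcycles}. The paper instead argues by exclusion over three $\mu$-regimes: for $|\mu_2|\le\delta$ small the IFT applies directly; for moderate $|\mu_2|>\delta$ the equilibrium of the reduced problem sits on the normally hyperbolic part of $C_2$, so Fenichel theory rules out periodic orbits; and for $r_2 c\le|\mu|\le\delta$ with $c$ large (depending on $D$), $\dot y_2$ has a definite sign on the compact set $D\times[-\delta,\delta]$, again excluding periodic orbits. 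Your averaging argument is more direct and quantitative, and it bypasses the normal-hyperbolicity step (which the paper states rather tersely). The paper's case split, on the other hand, never needs to estimate time-averages along an unknown orbit.

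One caveat: you invoke $T=O(1)$ to pass from $\bar y_2$ to $y_2^*$. This is not justified a priori---you only know the orbit lives in a compact set, not that its period is bounded uniformly in $r_2$---and establishing it directly would be awkward, since the return-time bound from \lemmaref{extension} already presupposes $\mu_2$ in the range where the slow manifold of \propref{slowmanifold} is constructed. Fortunately you do not need this step. The two averaged identities alone give $\mu_2^* = -2\lambda\,\bar y_2 + O(r_2)$, and since $y_2\in[-\delta,\delta]$ along the orbit one has $|\bar y_2|\le\delta$, whence $|\mu_2^*|\le 2|\lambda|\delta + O(r_2)$. This already lands $(y_2^*,\mu_2^*)$ in the IFT neighborhood for $\delta$ small, without ever relating $\bar y_2$ to $y_2^*$. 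So drop the $T=O(1)$ claim and the argument goes through cleanly.
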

\begin{proof}
 The statement is clearly true for $\mu=r_2\mu_2$ with $\mu_2\in [-\delta,\delta]$ for $\delta>0$ small enough, since the family $\Gamma^{\textnormal{small}}_{2,h,\epsilon}$, $h\in (0,h_0]$, is obtained by the implicit function theorem. Next, we realize that there are no periodic orbits for any $\vert \mu_2\vert>\delta$ and $r_2>0$ small enough since in this case, the equilibrium \eqref{y2eq} is hyperbolic; specifically, $C_2$\response{, recall \eqref{C2set},} is normally hyperbolic in a neighborhood of \eqref{y2eq}. Finally, for $r_2c \le \vert \mu\vert \le \delta$ with $c>0$ large enough, $\dot y_2$ has a single sign in the set $(x_2,z_2)\in D$, $y_2\in [-\delta,\delta]$, completing the proof. 
\end{proof}
Clearly, the family $\Gamma_{2,\epsilon,h}^{\textnormal{small}}$, $h\in (0,h_0]$, becomes a family of ``small'' periodic orbits $\Gamma_{\epsilon,h}^{\textnormal{small}}$ of \eqref{fast} upon blowing down, intersecting $z=0$ in $x\sim \sqrt{\epsilon} h$, $y=r_2 \overline y_2(h,\sqrt{\epsilon})$, $h\in (0,h_0]$. (Here we have used $\sim$ in the expression for $x$ since this only holds to leading order; the family is parametrized by $\tilde u_1=h$ and $x_2$ and $\tilde u_1$ differ by $\mathcal O(\sqrt{\epsilon})$ in the relevant domain.)


Next, we turn to the intermediate periodic orbits obtained from \propref{prop:large}. Here we obtain a family of periodic orbits $\Gamma^{\textnormal{inter}}_{\epsilon,h}$, $h\in[\sqrt{\epsilon}/\sqrt{\epsilon_{11}},r_{10}]$,  of \eqref{fast} with $\mu=\overline \mu_1(h^{-2}\epsilon,h)$ for all $\epsilon>0$ small enough, parametrized by their intersection  $(x,y,z)=(-h^2,h\overline y_1(h^{-2}\epsilon,h),0)$. This follows from \propref{prop:large} and \eqref{barxneg1}, specifically $\epsilon=r_1^2\epsilon_1$ and $r_1=h$. Now, $r_1=h=\sqrt{\epsilon} /\sqrt{\epsilon_{11}}$ corresponds to $\epsilon_1=\epsilon_{11}$ or $x_2=-1/\epsilon_{11}$ upon change of coordinates \eqref{cc12}. Consequently, upon taking $h_0> -1/\sqrt{\epsilon_{11}}$ the branch $\Gamma^{\textnormal{inter}}_{2,\epsilon,h}$, i.e. $\Gamma^{\textnormal{inter}}_{\epsilon,h}$ written in the coordinates of the $\bar \epsilon=1$-chart, overlap with $\Gamma_{2,\epsilon,h}^{\textnormal{small}}$ for all $0<\epsilon\ll 1$. By \lemmaref{overlap}, where they overlap, they coincide for all $0<\epsilon\ll 1$. In this way, we obtain the desired family \response{$\Gamma_{\epsilon,h}$} by gluing the two branches together in the domain where they overlap.  \response{In other words, there are continuous functions $H_0$, $H_1$, $H_2$ with $H_2(0,h)\equiv -h^2$, and $\overline \mu$, such that for each $h\in (0,h_1]$ and all $0<\epsilon\ll 1$ ($h_0>0$ large enough and $h_1>0$ small enough):
\begin{align*}
 \Gamma_{\epsilon,h} = \begin{cases}
                        \Gamma_{H_0(\epsilon,h),\epsilon}^{\textnormal{small}} \quad \text{for} \quad h\in [0,\sqrt{\epsilon} h_0],\\
                        \Gamma_{H_1(\epsilon,h),\epsilon}^{\textnormal{inter}}\quad \text{for} \quad h\in (\sqrt{\epsilon} h_0,h_1],
                       \end{cases}
\end{align*}
is a periodic orbit of \eqref{fast} for $\mu=\overline \mu(\epsilon,h)$, that intersects $z=0$ in $(x,y,z)=(H_2(\epsilon,h),*,0)$. This completes the proof of \thmref{main}.}




\section{Discussion}\seclab{discuss}
Our main theorem generalizes the birth of canard cycles in $\mathbb R^2$, see e.g. \cite{krupa2001a}, to $\mathbb R^3$ through the folded saddle-node of type II and the strong canard. In contrast to the result in \cite{krupa2001a}, our results are -- however -- only local. (The local neighborhood \textit{is} independent of $\epsilon$). Nevertheless, it is also possible to use our method to obtain more global results. We have illustrated a situation in \figref{fig:canardswheads}(a) of an $S$-shaped critical manifold (as in the Koper model \cite{guckenheimer2015a,koper1991a}). Here we can also follow the forward and backward flow of the set of points on the section (in orange) that is indicated in the figure using the normal forms and the solutions of the Shilnikov problem, see \lemmaref{transformnearM1a} and \propref{this}. In contrast to $P_{2,i}$,$i=a,r$, above, we would just define transition maps from $r_1=r_{10}$ fixed to $\epsilon_1=\epsilon_{11}$. Otherwise, the proof for the existence of closed orbits, would proceed analogously, using that the center manifolds $M_{a,r}$ intersect transversally along the strong canard $\gamma_0(\mu)$. (Notice that in general these cycles occur for $\mu=\mathcal O(1)$, i.e. at an order one distance from the folded saddle-node.) The same holds for the perturbation of the singular cycles in \figref{fig:canardswheads}(b). (Having said that, the transition from (a) to (b) is more complicated.) These extensions could even be done in the smooth setting. The  cycles in \figref{fig:canardswheads}(c) mark the end of canard cycles and in $\mathbb R^2$ this is where classical relaxation oscillations appear. However, in contrast to $\mathbb R^2$ the folded singularity $p$ can be of different types, e.g. folded nodes or folded saddles (since we are away from the folded saddle-node in general). 

We proved our main result in the analytic setting. In particular, we used analyticity to prove existence of a slow manifold, that is not normally hyperbolic but acts as the center of (normal) oscillations. This in connection with Melnikov theory, enabled an extension of the Hopf cycles. Although results on normally elliptic slow manifolds  \cite{balser1994a,braaksma1992a,de2020a,kristiansenwulff} are almost exclusively in the analytic setting, it seems plausible (following the work of \cite{bonckaert1986a,bonckaert2005a}, discussed in \secref{main}) that the result could be extended to the smooth setting.  

Finally, we note that our main result does not rely upon analyticity with respect to $\epsilon$, only in the space variables. This suggests that our results may also apply to systems that have been reduced from a slow manifold reduction from a higher dimension. Indeed, slow manifolds in analytic systems have been shown to be analytic in space variables (and only Gevrey in $\epsilon$), see e.g. \cite{de2020a} and references herein. 
\begin{figure}[h!]
\begin{center}
\subfigure[]{\includegraphics[width=.49\textwidth]{./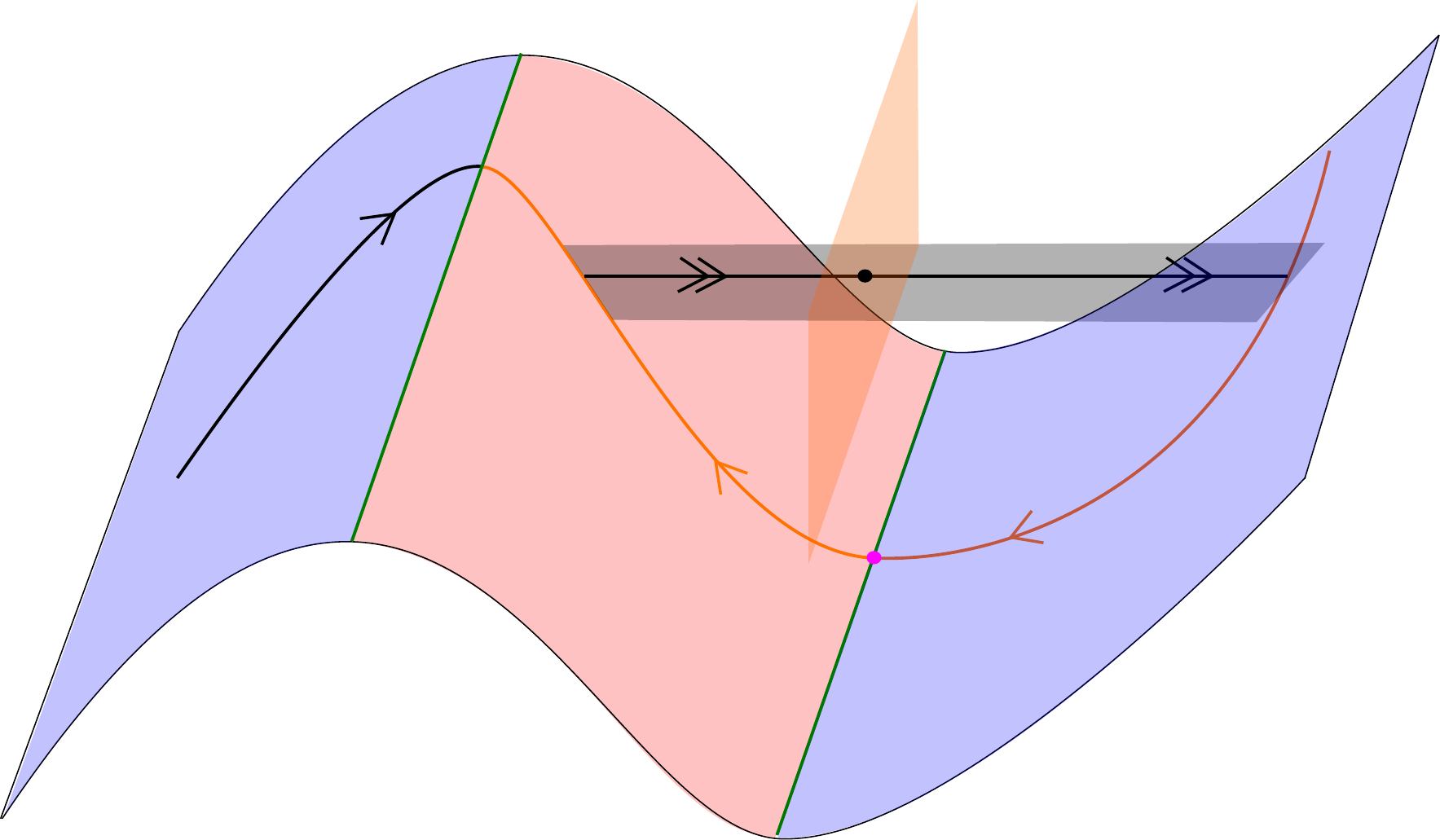}}
\subfigure[]{\includegraphics[width=.49\textwidth]{./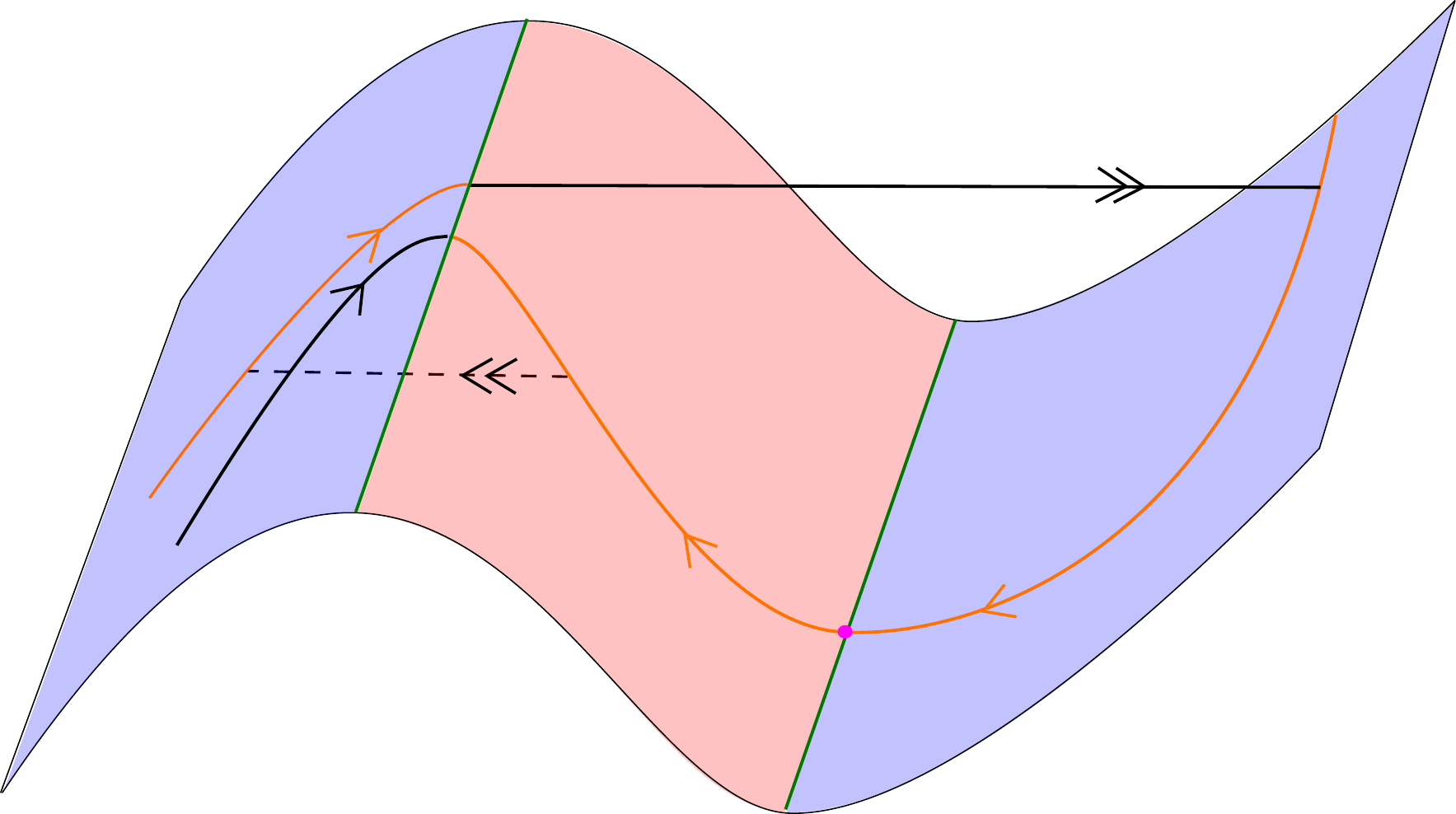}}
\subfigure[]{\includegraphics[width=.49\textwidth]{./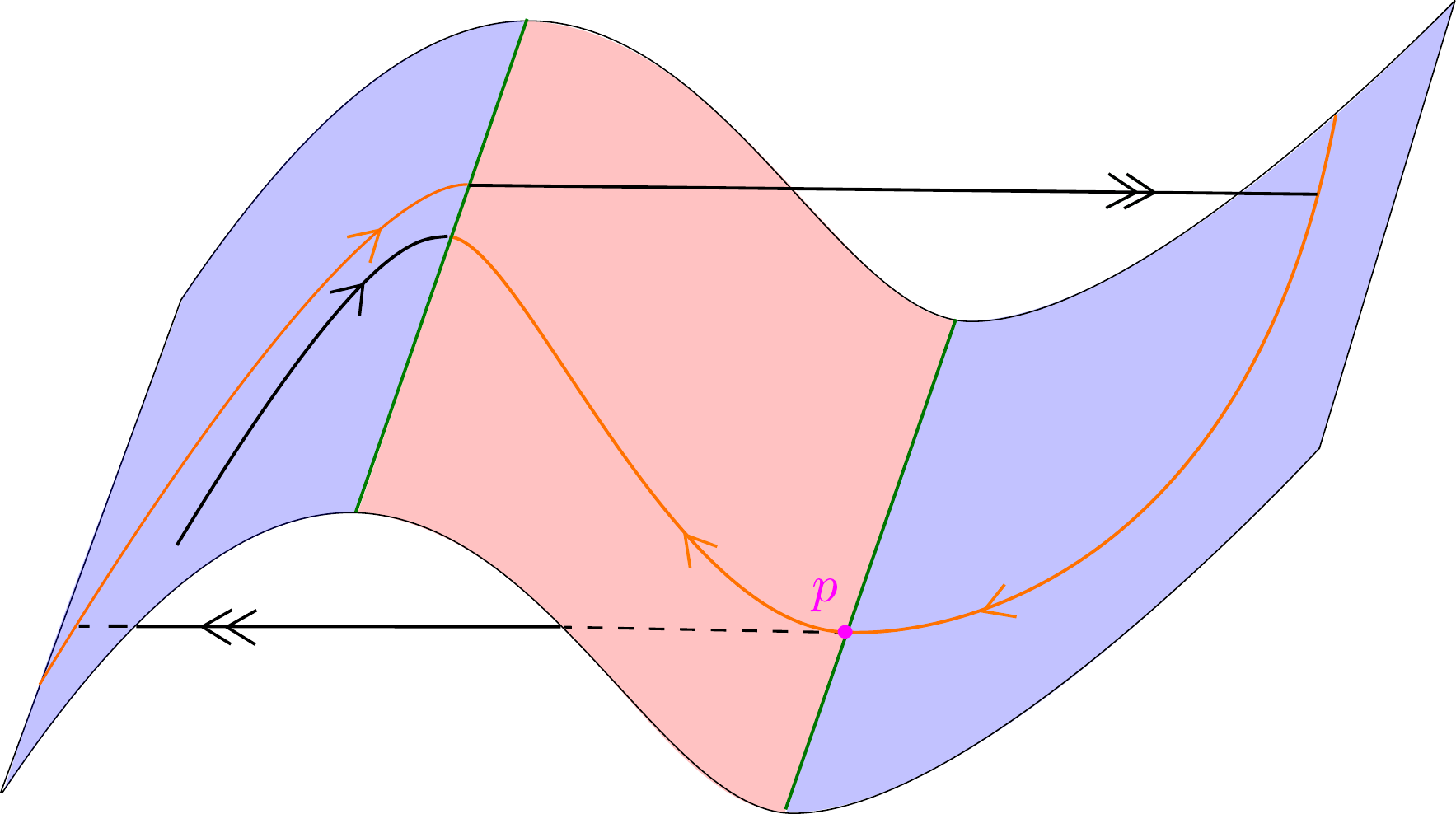}}
\end{center}
\caption{Canard cycles in $\mathbb R^3$ in the case of a $S$-shaped critical manifold. In (a): canard cycle without head (following the terminology from $\mathbb R^2$, see \cite{krupa2001a}), (b): canard cycles with head and (c): the end of the canard cycles, where transition to classical relaxation oscillations appear.}
\figlab{fig:canardswheads}
\end{figure}

\responsenew{In future work, we will study bifurcations of the periodic orbits. In particular, we are interested in a description of the period doubling bifurcations that have been reported in several papers, see e.g. \cite{zaks2011a}. These period doubling bifurcations are associated with the heteroclinic connection on the blowup sphere (recall \figref{blowup0}) for $\epsilon\rightarrow 0$; notice specifically that the period doubling bifurcations cannot occur within compact sets of the $(x_2,y_2,z_2)$-space as the multipliers are closed to $1$ there. We believe that \propref{P1a} and \propref{P1r} provide the adequate details to analyze such bifurcations for all $0<\epsilon\ll 1$. }

\textbf{Acknowledgement}. The author thanks Renato Huzak for providing valuable feedback on earlier versions of the manuscript. 

\bibliography{refs}

\begin{thebibliography}{10}

\bibitem{amir2002a}
R.~Amir, M.~Michaelis, and M.~Devor.
\newblock Burst discharge in primary sensory neurons: triggered by subthreshold
  oscillations, maintained by depolarizing afterpotentials.
\newblock {\em Journal of Neuroscience}, 22(3):1187--1198, 2002.

\bibitem{baldom2019a}
I.~Baldom\'a, S.~Ib\'a\~nez, and T.M. Seara.
\newblock {Hopf-zero singularities truly unfold chaos}.
\newblock {\em Communications in Nonlinear Science and Numerical Simulation},
  84:105162, 2019.

\bibitem{baldom2008a}
I.~Baldom\'a and T.~M. Seara.
\newblock {The inner equation for generic analytic unfoldings of the Hopf-zero
  singularity}.
\newblock {\em Discrete and Continuous Dynamical Systems - Series B},
  10(2-3):323--347, 2008.

\bibitem{balser1994a}
W.~Balser.
\newblock {\em From Divergent Power Series to Analytic Functions : Theory and
  Application of Multisummable Power Series}.
\newblock Springer, 1994.

\bibitem{beno2001a}
E.~Benoit.
\newblock Singular perturbation, tridimensional case: Canards on a
  pseudo-singular node point.
\newblock {\em Bulletin De La Societe Mathematique De France}, 129(1):91--113,
  2001.

\bibitem{Benoit81}
E.~Beno{i}t, J.~L. Callot, F.~Diener, and M.~Diener.
\newblock Chasse au canard.
\newblock {\em Collect. Math.}, 31-32:37--119, 1981.

\bibitem{bonckaert1986a}
P.~Bonckaert and F.~Dumortier.
\newblock {Smooth invariant curves for germs of vector-field in R3 whose linear
  part generates a rotation}.
\newblock {\em Journal of Differential Equations}, 62(1):95--116, 1986.

\bibitem{bonckaert2005a}
P.~Bonckaert and E.~Fontich.
\newblock Invariant manifolds of dynamical systems close to a rotation:
  Transverse to the rotation axis.
\newblock {\em Journal of Differential Equations}, 214(1):128--155, 2005.

\bibitem{braaksma1998a}
B.~Braaksma.
\newblock Singular hopf bifurcation in systems with fast and slow variables.
\newblock {\em Journal of Nonlinear Science}, 8(5):457--490, 1998.

\bibitem{braaksma1992a}
B.L.J. Braaksma.
\newblock Multisummability of formal power-series solutions of nonlinear
  meromorphic differential-equations.
\newblock {\em Annales De L Institut Fourier}, 42(3):517--540, 1992.

\bibitem{broer1984a}
H.~W. Broer and G.~Vegter.
\newblock Subordinate Šil'nikov bifurcations near some singularities of vector
  fields having low codimension.
\newblock {\em Ergodic Theory and Dynamical Systems}, 4(04), 1984.

\bibitem{bronsBarEli}
M.~Br{\o}ns and K.~Bar-Eli.
\newblock Canard explosion and excitation in a model of the
  {B}elousov-{Z}habotinsky reaction.
\newblock {\em Journal of Physical Chemistry}, 95:8706--8713, 1991.

\bibitem{brons-krupa-wechselberger2006:mixed-mode-oscil}
M.~Br{\o}ns, M.~Krupa, and M.~Wechselberger.
\newblock Mixed mode oscillations due to the generalized canard phenomenon.
\newblock In W.~Nagata and N.~Sri Namachchivaya, editors, {\em Bifurcation
  Theory and Spatio-Temporal Pattern Formation}, volume~49 of {\em Fields
  Institute Communications}, pages 39--64. American Mathematical Society, 2006.

\bibitem{de2020a}
P.~De~Maesschalck and K.~Kenens.
\newblock Gevrey asymptotic properties of slow manifolds.
\newblock {\em Nonlinearity}, 33(1):341--387, 2020.

\bibitem{deng1988a}
B.~Deng.
\newblock {The Shilnikov problem, exponential expansion, strong
  $\lambda$-lemma, C1-linearization, and homoclinic bifurcation}.
\newblock {\em Journal of Differential Equations}, 79(79):189--231, 1988.

\bibitem{deng1990a}
B~Deng.
\newblock Homoclinic bifurcations with nonhyperbolic equilibria.
\newblock {\em Siam Journal on Mathematical Analysis}, 21(3):693--720, 1990.

\bibitem{desroches2012a}
M.~Desroches, J.~Guckenheimer, B.~Krauskopf, H.~M. Osinga, C.~Kuehn, and
  M.~Wechselberger.
\newblock Mixed-mode oscillations with multiple time scales.
\newblock {\em {SIAM Review}}, 54(2):211--288, 2012.

\bibitem{dumortier2006a}
F.~Dumortier, J.~Llibre, and J.~C. Art\'es.
\newblock {\em Qualitative theory of planar differential systems}.
\newblock Springer Berlin Heidelberg, 2006.

\bibitem{dumortier_1996}
F.~Dumortier and R.~Roussarie.
\newblock Canard cycles and center manifolds.
\newblock {\em Mem. Amer. Math. Soc.}, 121:1--96, 1996.

\bibitem{dumortier2009a}
F.~Dumortier and R.~Roussarie.
\newblock Birth of canard cycles.
\newblock {\em Discrete and Continuous Dynamical Systems - Series S},
  2(4):723--781, 2009.

\bibitem{epstein1996a}
I.~R. Epstein and K.~Showalter.
\newblock Nonlinear chemical dynamics: Oscillations, patterns, and chaos.
\newblock {\em Journal of Physical Chemistry}, 100(31):13132--13147, 1996.

\bibitem{fen1}
N.~Fenichel.
\newblock Persistence and smoothness of invariant manifolds for flows.
\newblock {\em Indiana University Mathematics Journal}, 21:193--226, 1971.

\bibitem{fen2}
N.~Fenichel.
\newblock Asymptotic stability with rate conditions.
\newblock {\em Indiana University Mathematics Journal}, 23:1109--1137, 1974.

\bibitem{fen3}
N.~Fenichel.
\newblock Geometric singular perturbation theory for ordinary differential
  equations.
\newblock {\em J. Diff. Eq.}, 31:53--98, 1979.

\bibitem{goryachev1997a}
A.~Goryachev, P.~Strizhak, and R.~Kapral.
\newblock Slow manifold structure and the emergence of mixed-mode oscillations.
\newblock {\em Journal of Chemical Physics}, 107(8):2881--2889, 1997.

\bibitem{guckenheimer2008a}
J.~Guckenheimer.
\newblock {Singular Hopf bifurcation in systems with two slow variables}.
\newblock {\em {SIAM Journal on Applied Dynamical Systems}}, 7(4):1355--1377,
  2008.

\bibitem{Guckenheimer97}
J.~Guckenheimer and P.~Holmes.
\newblock {\em Nonlinear Oscillations, Dynamical Systems and Bifurcations of
  Vector Fields}.
\newblock Springer Verlag, 5th edition, 1997.

\bibitem{guckenheimer2015a}
J.~Guckenheimer and I.~Lizarraga.
\newblock Shilnikov homoclinic bifurcation of mixed-mode oscillations.
\newblock {\em Siam Journal on Applied Dynamical Systems}, 14(2):764--786,
  2015.

\bibitem{huzak2019a}
R.~Huzak.
\newblock {Canard explosion near non-Liénard type slow–fast Hopf point}.
\newblock {\em Journal of Dynamics and Differential Equations}, 31(2):683--709,
  2019.

\bibitem{ilyashenko1991a}
Y.S. Ilyashenko and S.Y. Yakovenko.
\newblock Finitely-smooth normal forms of local families of diffeomorphisms and
  vector-fields.
\newblock {\em Russian Mathematical Surveys}, 46(1):1--43, 1991.

\bibitem{izhi}
E.~M. Izhikevich.
\newblock {\em {Dynamical Systems in Neuroscience: The geometry of Excitability
  and Bursting}}.
\newblock {The MIT Press}, 2007.

\bibitem{jones_1995}
C.~K. R.~T. Jones.
\newblock {\em Geometric Singular Perturbation Theory, Lecture Notes in
  Mathematics, Dynamical Systems (Montecatini Terme)}.
\newblock Springer, Berlin, 1995.

\bibitem{koper1991a}
M.T.M. Koper and P.~Gaspard.
\newblock Mixed-mode and chaotic oscillations in a simple-model of an
  electrochemical oscillator.
\newblock {\em Journal of Physical Chemistry}, 95(13):4945--4947, 1991.

\bibitem{koper1992a}
M.T.M. Koper and P.~Gaspard.
\newblock The modeling of mixed-mode and chaotic oscillations in
  electrochemical systems.
\newblock {\em Journal of Chemical Physics}, 96(10):7797--7813, 1992.

\bibitem{kukmgp}
K.~U. Kristiansen and M.~G. Pedersen.
\newblock Mixed-mode oscillations in coupled fitzhugh-nagumo oscillators:
  blowup analysis of cusped singularities, 2022.

\bibitem{kristiansenwulff}
K.~U. Kristiansen and C.~Wulff.
\newblock Exponential estimates of symplectic slow manifolds.
\newblock {\em Journal of Differential Equations}, 261(1):56--101, 2016.

\bibitem{krupa_extending_2001}
M.~Krupa and P.~Szmolyan.
\newblock Extending geometric singular perturbation theory to nonhyperbolic
  points - fold and canard points in two dimensions.
\newblock {\em {SIAM} Journal on Mathematical Analysis}, 33(2):286--314, 2001.

\bibitem{krupa2001a}
M.~Krupa and P.~Szmolyan.
\newblock Relaxation oscillation and canard explosion.
\newblock {\em Journal of Differential Equations}, 174(2):312--368, 2001.

\bibitem{krupa2008a}
M.~Krupa and M.~Wechselberger.
\newblock Local analysis near a folded saddle-node singularity.
\newblock {\em Journal of Differential Equations}, 248(12):2841--2888, 2008.

\bibitem{kuehn2015}
C.~Kuehn.
\newblock {\em {Multiple Time Scale Dynamics}}.
\newblock Springer-Verlag, Berlin, 2015.

\bibitem{matsumoto1993a}
T.~Matsumoto, R.~Tokunaga, M.~Komuro, and H.~Kokubu.
\newblock {\em Bifurcations}.
\newblock Springer Japan, 1993.

\bibitem{mujica2017a}
J.~Mujica, B.~Krauskopf, and H.~M. Osinga.
\newblock Tangencies between global invariant manifolds and slow manifolds near
  a singular hopf bifurcation.
\newblock {\em {SIAM Journal on Applied Dynamical Systems}}, 17(2):1395--1431,
  2017.

\bibitem{rinzel1987a}
J.~Rinzel.
\newblock A formal classification of bursting mechanisms in excitable systems.
\newblock {\em Mathematical Topics in Population Biology, Morphogenesis and
  Neurosciences}, pages 267--281, 1987.

\bibitem{szmolyan_canards_2001}
P.~Szmolyan and M.~Wechselberger.
\newblock Canards in $\mathbb{R}^3$.
\newblock {\em J. Diff. Eq.}, 177(2):419--453, December 2001.

\bibitem{pol1920a}
B.~van~der Pol.
\newblock A theory of the amplitude of free and forced triode vibrations.
\newblock {\em Radio Review}, 1(15):754--762, 1920.

\bibitem{Vo}
T.~Vo, R.~Bertram, and M.~Wechselberger.
\newblock Bifurcations of canard-induced mixed mode oscillations in a pituitary
  lactotroph model.
\newblock {\em Discrete and Continuous Dynamical Systems}, 32(8):2879--2912,
  2012.

\bibitem{vonew}
T.~Vo and M.~Wechselberger.
\newblock Canards of folded saddle-node type i.
\newblock {\em {SIAM Journal on Applied Dynamical Systems}}, 47(4):3235--3283,
  2015.

\bibitem{wechselberger_existence_2005}
M.~Wechselberger.
\newblock Existence and bifurcation of canards in $\mathbb{R}^3$ in the case of
  a folded node.
\newblock {\em {SIAM} Journal on Applied Dynamical Systems}, 4(1):101--139,
  January 2005.

\bibitem{zaks2011a}
M.~Zaks.
\newblock On chaotic subthreshold oscillations in a simple neuronal model.
\newblock {\em Mathematical Modelling of Natural Phenomena}, 6(1):149--162,
  2011.

\end{thebibliography}
\bibliographystyle{plain}
\newpage
\appendix
\section{Proof of \propref{this}}\applab{proofprop32}
We consider \eqref{eqnsredMa1new}, repeated here for convinience:
\begin{equation}\eqlab{eqnsredMa1newapp}
\begin{aligned}
 \epsilon_1' &=\epsilon_1,\\
 r_1' &=-\frac12 r_1,\\
 y_{1,a}' &=(\lambda(\mu) +r_1 L_0(r_1,y_{1,a},\mu) )y_{1,a}+r_1 \epsilon_1 L_1(\epsilon_1,r_1,y_{1,a},\mu),
\end{aligned}
\end{equation}
with 
\begin{align*}
 r_1(t) = e^{-\frac12 t} r_{10},\quad \epsilon_1(t) = e^{(t-\tau)} \epsilon_{11},
\end{align*}
and $y_{1,a}(\tau) = y_{11}$. 
We write 
\begin{align}\eqlab{y1texpr}
y_{1,a}(t)=e^{\lambda(\mu) (t-\tau)} e^{\int_{\tau}^t  r_1(s) L_0(r_1(s),0,\mu)ds} \left(y_{11}+u(t)\right),
\end{align}
with $u(\tau)=0$.
Here 
\begin{align*}
 \vert \int_{\tau}^t  r_1(s) L_0(r_1(s),0,\mu)ds\vert \le  2 r_{10} c_0,
\end{align*}
for $0\le t\le \tau$ using $\int_0^\tau e^{-\frac12 s}ds \le 2$
and $\vert L_0(r_1,0,\mu)\vert \le c_0$ for some $c_0>0$.
Then from \eqref{eqnsredMa1newapp} and variation of constants, we have
 \begin{equation}\eqlab{ut}
\begin{aligned}
 u(t) &= \int_{\tau}^t   r_1(s)   e^{\lambda(\mu) (s-\tau)} e^{\int_{\tau}^s  r_1(v) L_0(r_1(v),0,\mu)dv}  \overline L_0(r_1(s),y_{1,a}(s),\mu) \left(y_{11}+u(s)\right)^2 ds\\
 &+ \int_{\tau}^t r_1(s) \epsilon_1 (s) e^{-\lambda (\mu)(s-\tau)} e^{-\int_{\tau}^s  r_1(s) L_0(r_1(s),0,\mu)ds}  L_1(r_1(s),\epsilon_1(s), y_{1,a}(s),\mu)ds,
\end{aligned}
\end{equation}
with the smooth function $\overline L_0$ defined by $$L_0(r_1,y_{1,a},\mu) = L_0 (r_1,0,\mu)+\overline L_0(r_1,y_{1,a},\mu)y_{1,a},$$
and the mean value theorem.
Let $\alpha>0$. We then consider the closed subset 
\begin{align}
\Gamma_\alpha(\delta)=\{u\in C_\alpha([0,\tau];\mathbb R)\,\vert \Vert u\Vert_\alpha \le \delta,\,u(\tau)=0\},
\end{align}
of the Banach space $C_\alpha([0,\tau];\mathbb R)$ equipped with the norm
\begin{align*}
 \Vert u\Vert_\alpha := \sup_{t\in [0,\tau]} e^{\alpha \tau }\vert u(t)\vert.
\end{align*}
 $y_{1,a}(t)$ in \eqref{y1texpr} is bounded by $2(\chi+\delta)$ for $\vert y_{11}\vert \le \chi$, $u\in \Gamma_\alpha(\delta)$ and all $r_{10}>0$ small enough. 
%
%
%

Let $\Theta(u)(t)$, with $y_{1,a}$ given by \eqref{y1texpr}, denote the right hand side of \eqref{ut}. We then show that $\Theta$ is a contraction on $\Gamma_\alpha(\delta)$, upon choosing the constants appropriately. 

For this purpose, we first emphasize that $\Theta(u)(\tau)=0$. Moreover, by \eqref{lambdamu}, we have $\vert \lambda(\mu)-\frac12 \vert \le \frac12 \chi$ for all $\vert \mu\vert \le \chi$. Then for $\vert \mu\vert \le \chi$, $\vert y_{11}\vert \le \chi$, we
estimate
\begin{align}
 \vert \Theta(u)(t)\vert &\le 
   2 c_1 r_{10}  (\chi+\delta)^2 \int_{0}^\tau   e^{-\frac12 s}  e^{\lambda(\mu) (s-\tau)} ds+ 2c_1 r_{10}\epsilon_{11}  \int_{0}^\tau   e^{-\frac12 s} e^{(s-\tau)}  e^{-\lambda(\mu) (s-\tau)} ds\nonumber \\
 &\le 2 c_1 \tau e^{(\chi-\frac12) \tau} r_{10}   \left((\chi+\delta)^2  +\epsilon_{11}\right),\eqlab{ThetaEst}
\end{align}
for $0\le t\le \tau$, $r_{10}>0$ small enough,
using $\vert \overline L_0(r_1,y_{1,a},\mu)\vert,\vert  L_1(r_1,\epsilon,y_{1,a},\mu)\vert \le c_1$ for some $c_1>0$. 

Moreover, seeing that $\overline L_0$ and $L_2$ are smooth functions, there is a \response{$c_2>0$} such that 
\begin{align*}
 \vert \overline L_0(r_1(t),y_{1,a}(t),\mu)(y_{11}+u(t))^2-\overline L_0(r_1(t),\tilde y_{1,a}(t),\mu) (y_{11}+\tilde u(t))^2 \vert &\le c_2 \vert u(t)-\tilde u(t)\vert,
 \end{align*}
 and
 \begin{align*}
 \vert L_1(r_1(t),\epsilon_1(t), y_{1,a}(t),\mu)-L_1(r_1(t),\epsilon_1(t),\tilde y_{1,a}(t),\mu) \vert &\le c_2 \vert u(t)-\tilde u(t)\vert,
 \end{align*}
for all $0\le t\le \tau$, $\tau>\tau_0$, $u,\tilde u\in \Gamma_{\alpha}(\delta)$ and \response{$c_2>0$}.
This leads to 
\begin{align*}
 \Vert \Theta(u) - \Theta(\tilde u)\Vert_\alpha \le 2\tau c_2e^{(\chi-\frac12) \tau}   r_{10} \left(1+\epsilon_{11}\right)  \Vert u-\tilde u\Vert_\alpha,
\end{align*}
for any $\tilde u,u\in \Gamma_\alpha(\delta)$. 

 We fix $\alpha\in (0,\frac12)$. From the preceding estimates, it then follows that 
\begin{align*}
 \Theta:\Gamma_{\alpha}(\delta)\rightarrow \Gamma_{\alpha}(\delta),
\end{align*}
is well-defined and a contraction
for $\delta>0$, $r_{10}>0$, $\epsilon_{11}>0$ and $\chi>0$ small enough.

We denote the unique fixed-point -- existence of which follows from Banach's fixed point theorem --  by $\underline u(t,\tau,\epsilon_{11},r_{10},y_{11},\mu)$. From \eqref{ThetaEst}, we find that there is a  \response{$c_3>0$} such that
\begin{align*}
 \vert \underline u(t,\tau,\epsilon_{11},r_{10},y_{11},\mu)\vert \le e^{-\alpha \tau} \response{c_3} r_{10},
\end{align*}
for all $0\le t\le \tau$, $\tau>\tau_0$. 
Writing 
\begin{align*}
 \phi(t,\tau,\epsilon_{11},r_{10},y_{11},\mu) &:=  \left(e^{\int_{\tau}^t  r_1(s) L_0(r_1(s),0,\mu)ds}-1\right)y_{11} \\
 &+e^{\int_{\tau}^t  r_1(s) L_0(r_1(s),0,\mu)ds}  \underline u(t,\tau,\epsilon_{11},r_{10},y_{11},\mu),
\end{align*}
gives \propref{this} for $k=0$.
%
%
%
Now,  
regarding the $C^k$-smoothness of $\phi$, we can proceed completely analogously by setting up fixed-point equations for the partial derivatives of $\underline u$. We leave out further details, and refer instead to \cite{deng1988a}. 
 \end{document}